\newtheorem{theorem}{Theorem}[section]
\newtheorem{lemma}[theorem]{Lemma}
\newtheorem{proposition}[theorem]{Proposition}
\newtheorem{corollary}[theorem]{Corollary}
\newtheorem*{theorem*}{Theorem}
\theoremstyle{definition}
\newtheorem{observation}[theorem]{Observation}
\newtheorem{remark}[theorem]{Remark}
\newtheorem{example}[theorem]{Example}
\newtheorem{definition}[theorem]{Definition}
\newtheorem*{acknowledgments}{Acknowledgments}
\DeclareMathOperator{\Bl}{Bl}
\DeclareMathOperator{\Pic}{Pic}
\DeclareMathOperator{\Km}{Km}
\DeclareMathOperator{\NS}{NS}
\DeclareMathOperator{\SL}{SL}
\DeclareMathOperator{\Num}{Num}
\DeclareMathOperator{\Aut}{Aut}
\DeclareMathOperator{\rk}{rk}
\DeclareMathOperator{\topo}{top}
\DeclareMathOperator{\diag}{diag}
\DeclareMathOperator{\Hom}{Hom}
\DeclareMathOperator{\Supp}{Supp}
\DeclareMathOperator{\id}{id}
\def\l@subsection{\@tocline{2}{0pt}{2.5pc}{5pc}{}}
\author{Luca Schaffler}
\subjclass[2010]{14J28, 14C22, 14J50}
\keywords{K3 surfaces, N\'eron-Severi lattices, symplectic automorphisms.}
\title{K3 surfaces with $\mathbb{Z}_2^2$ symplectic action}
\date{}
\begin{document}

\maketitle

\begin{abstract}
Let $G$ be a finite abelian group which acts symplectically on a K3 surface. The N\'eron-Severi lattice of the projective K3 surfaces admitting $G$ symplectic action and with minimal Picard number is computed by Garbagnati and Sarti. We consider a $4$-dimensional family of projective K3 surfaces with $\mathbb{Z}_2^2$ symplectic action which do not fall in the above cases. If $X$ is one of these K3 surfaces, then it arises as the minimal resolution of a specific $\mathbb{Z}_2^3$-cover of $\mathbb{P}^2$ branched along six general lines. We show that the N\'eron-Severi lattice of $X$ with minimal Picard number is generated by $24$ smooth rational curves, and that $X$ specializes to the Kummer surface $\Km(E_i\times E_i)$. We relate $X$ to the K3 surfaces given by the minimal resolution of the $\mathbb{Z}_2$-cover of $\mathbb{P}^2$ branched along six general lines, and the corresponding Hirzebruch-Kummer covering of exponent $2$ of $\mathbb{P}^2$.
\end{abstract}
\maketitle

%----------------------------------------------------------------------------------------------------

\section{Introduction}
Let $X$ be a K3 surface over $\mathbb{C}$. A subgroup of the automorphism group of $X$ acts symplectically on $X$ if the induced action on $H^0(X,\omega_X)$ is the identity. Finite abelian groups of automorphisms acting symplectically on K3 surfaces were classified by Nikulin in \cite{nikulin79}, and by Mukai in the non-commutative case (see \cite{mukai} and also \cite{xiao}).

Let $G$ be a finite abelian group that acts symplectically on a K3 surface $X$. If $\Omega_G$ denotes the orthogonal complement in $H^2(X;\mathbb{Z})$ of the fixed sublattice $H^2(X;\mathbb{Z})^G$, then $\Omega_G$ is a negative definite primitive sublattice of the N\'eron-Severi lattice $\NS(X)$. All the possible lattices $\Omega_G$ and their orthogonal complements in $H^2(X;\mathbb{Z})$ are computed in \cite{garbagnatisarti07,garbagnatisarti09}. If $X$ is projective, then the Picard number $\rho(X)$ satisfies $\rho(X)\geq\rk(\Omega_G)+1$.

The projective K3 surfaces $X$ admitting a $G$ symplectic action form a family of dimension $19-\rk(\Omega_G)$ (see \cite[Remark 6.2]{garbagnatisarti09}). If $\rho(X)=\rk(\Omega_G)+1$, then the lattices $\NS(X)$ are computed in \cite[Proposition 6.2]{garbagnatisarti09}. The case $G=\mathbb{Z}_2^4$ received special attention: in \cite{garbagnatisarti16}, among other results, the authors compute $\NS(\widetilde{X/G})$, where $\widetilde{X/G}$ is the minimal resolution of $X/G$, if $\rho(\widetilde{X/G})=16$.

There is a $7$-dimensional family of projective K3 surfaces with $\mathbb{Z}_2^2$ symplectic action, and in this paper we analyze the $4$-dimensional subfamily which arises as follows. Consider six general lines in $\mathbb{P}^2$ and divide them into three pairs. Consider the chain of double covers
\begin{equation*}
X_3\xrightarrow{\mathbb{Z}_2}X_2\xrightarrow{\mathbb{Z}_2}X_1\xrightarrow{\mathbb{Z}_2}\mathbb{P}^2,
\end{equation*}
where the first cover is branched along the first pair of lines, the second cover is branched along the preimage of the second pair of lines, and so on. The minimal resolution of $X_3$ is a projective K3 surface $X$, which we call a \emph{triple-double} K3 surface (see Section~\ref{definitiontriple-doublek3surfaces}). Equivalently, $X$ can be viewed as an appropriate $\mathbb{Z}_2^3$-cover of $\Bl_3\mathbb{P}^2$, which denotes the blow up of $\mathbb{P}^2$ at three general points. The surface $X$ admits a $\mathbb{Z}_2^2$ symplectic action and an Enriques involution (see Proposition~\ref{involutiontriple-doublek3surface}). The minimal Picard number for $X$ is $16$, as we show in Section~\ref{theneronseverilatticeofatriple-doublek3surface} (this happens for a very general $X$). Moduli compactifications of the corresponding $4$-dimensional family of Enriques surfaces are studied in \cite{oudompheng,schaffler}. The main goal of this paper is to compute the lattice $\NS(X)$ for $X$ with minimal Picard number, and relate it to the geometry of $X$.
\begin{theorem}
\label{mainresultinintro}
Let $X$ be a triple-double K3 surface with minimal Picard number. Then the following hold:
\begin{itemize}
\item[(i)] The N\'eron-Severi lattice $\NS(X)$ has rank $16$ and is generated by the irreducible components of the preimage of the $(-1)$-curves on $\emph{Bl}_3\mathbb{P}^2$. The dual graph of this configuration of $24$ smooth rational curves is shown in Figure~\ref{dualgraph24smoothrationalcurves} (see Theorem~\ref{mainresultNSgeneratedby24smoothrationalcurves});
\item[(ii)] $\NS(X)$ has discriminant group $\mathbb{Z}_2^2\oplus\mathbb{Z}_4^2$ and is isometric to $U\oplus E_8\oplus Q$, where $Q$ is the lattice in Lemma~\ref{thesublatticesSandQ}(iii). An explicit $\mathbb{Z}$-basis of $\NS(X)$ which realizes it as a direct sum of $U,E_8$, and $Q$ can be found in Remark~\ref{exampleofbasiswhichgivessplitting};
\item[(iii)] The transcendental lattice $T_X$ is isometric to $U\oplus U(2)\oplus\langle-4\rangle^{\oplus2}$ (see Proposition~\ref{transcendentallatticetriple-doublek3surface});
\item[(iv)] The Kummer surface $\Km(E_i\times E_i)$ (see \cite{keumkondo}) appears as a specialization of the $4$-dimensional family of triple-double K3 surfaces (see Theorem~\ref{explicitlinearrangementthatgiveskmeixei}). The line arrangement in $\mathbb{P}^2$ that gives rise to this Kummer surface is shown in Figure~\ref{linearrangementKm(EixEi)};
\item[(v)] Let $\iota$ be an involution on $X$ coming from the $\mathbb{Z}_2^2$ symplectic action, and denote by $X'$ the minimal resolution of $X/\iota$. For $X'$ with minimal Picard number, the N\'eron-Severi lattice of $X'$ has rank $16$ and discriminant group $\mathbb{Z}_2^4\oplus\mathbb{Z}_4^2$. An explicit $\mathbb{Z}$-basis for $\NS(X')$ is given in Theorem~\ref{allaboutnsx'andtx'}. Finally, the transcendental lattice $T_{X'}$ is isometric to $U(2)^{\oplus2}\oplus\langle-4\rangle^{\oplus2}$.
\end{itemize}
\end{theorem}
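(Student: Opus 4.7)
\smallskip
\noindent\textbf{Proof proposal.} My plan is to handle the five parts in a layered way: part~(i) is the technical heart, parts~(ii) and~(iii) follow by lattice-theoretic bookkeeping, part~(iv) is a dedicated specialization argument, and part~(v) adapts the strategy of part~(i) to the quotient surface $X'$.

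For part~(i), I first enumerate the twenty-four smooth rational curves on $X$. The surface $\Bl_3\mathbb{P}^2$ carries several $(-1)$-curves (the three exceptional divisors over the blown-up nodes of the line arrangement, and the proper transforms of the lines joining pairs of blown-up points); for each of these I compute its stabilizer under the Galois action of $\mathbb{Z}_2^3$ by inspecting its intersection pattern with the branch divisor, and this determines both the number and the genus of its lifts to $X$. A careful case-by-case analysis yields exactly twenty-four rational components. From the combinatorics of these lifts I assemble the dual graph (Figure~\ref{dualgraph24smoothrationalcurves}) and read off the Gram matrix of the lattice $L$ they generate; direct computation shows $\rk(L)=16$ and $|\det L|=64$. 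Since Section~\ref{theneronseverilatticeofatriple-doublek3surface} establishes the minimal Picard number to be $16$, $L$ has finite index in $\NS(X)$. To upgrade this to $L=\NS(X)$, I would exclude nontrivial even overlattices: each would correspond to an isotropic subgroup of the discriminant form on $L^*/L$, and I would rule out each possibility using the explicit $\mathbb{Z}_2^3$-Galois description of the cover to constrain which classes can be represented by effective divisors on $X$.

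Part~(ii) is then largely formal. The signature $(1,15)$ and the discriminant group $\mathbb{Z}_2^2\oplus\mathbb{Z}_4^2$ determined in part~(i) pin down the genus of $\NS(X)$; I would construct the splitting $\NS(X)\cong U\oplus E_8\oplus Q$ explicitly by locating a pair of isotropic classes with mutual intersection one among the twenty-four rational curves (yielding $U$), extracting an $E_8$ in its orthogonal complement from the root structure visible in the dual graph, and reading $Q$ off what remains. For part~(iii), since $H^2(X;\mathbb{Z})\cong U^{\oplus 3}\oplus E_8^{\oplus 2}$ is unimodular, $T_X$ is the orthogonal complement of $\NS(X)$ and so has rank $6$, signature $(2,4)$, and discriminant form $-q_{\NS(X)}$; one checks that $U\oplus U(2)\oplus\langle-4\rangle^{\oplus 2}$ shares these invariants and then invokes Nikulin's uniqueness in the genus to conclude.

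For part~(iv), the Kummer surface $\Km(E_i\times E_i)$ has Picard rank $20$, so within the four-dimensional family of triple-double K3 surfaces it must appear as a zero-dimensional stratum. I would search for a line arrangement in $\mathbb{P}^2$ admitting an extra symmetry compatible with the complex multiplication by $i$; this singles out a highly symmetric configuration of six lines (Figure~\ref{linearrangementKm(EixEi)}). The identification with $\Km(E_i\times E_i)$ is then verified by matching transcendental lattices, with $T_X$ specializing to the known rank-$2$ form of $\Km(E_i\times E_i)$ computed in \cite{keumkondo}. Part~(v) then repeats the strategy of parts~(i)--(iii) for the quotient: the symplectic involution $\iota$ has eight isolated fixed points, so $X/\iota$ has eight nodes and $X'$ is a K3 surface; $\NS(X')$ is generated by the $\iota$-invariant classes descending from the twenty-four curves on $X$ together with the eight $(-2)$-curves of the resolution, and a parallel rank-and-discriminant analysis identifies both $\NS(X')$ and $T_{X'}$. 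The hard part throughout will be the primitivity step in parts~(i) and~(v): the Gram-matrix computations are mechanical, but excluding every possible even overlattice of $\NS(X)$ (respectively $\NS(X')$) requires translating the algebraic question of which discriminant-group elements admit effective lifts into a geometric statement controlled by the $\mathbb{Z}_2^3$-Galois action on cycle classes.
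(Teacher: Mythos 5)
Your skeleton for parts (i)--(iii) matches the paper's (generate a finite-index sublattice by explicit rational curves, exclude proper even overlattices via isotropic subgroups of the discriminant form, then transfer to $T_X$ by unimodularity of $H^2(X;\mathbb{Z})$ and Nikulin's uniqueness theorem), but the one step you flag as "the hard part" is exactly where your proposal has no actual mechanism. The paper's exclusion argument reduces each of the finitely many isotropic classes of the discriminant group, modulo elements of $\NS(X)$ and modulo linear equivalences among the $24$ curves $R_i$, to one half of the sum of \emph{four disjoint} smooth rational curves, which is impossible on a K3 surface because an even set of disjoint $(-2)$-curves must have cardinality $8$ or $16$ (Nikulin; see Observation~\ref{keyfact}). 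Your phrase "constrain which classes can be represented by effective divisors using the Galois description of the cover" does not identify this tool, and without it the primitivity step does not close. A secondary organizational difference: the paper does not manipulate the full rank-$16$ Gram matrix of all $24$ curves directly, but isolates a unimodular $S\cong U\oplus E_8$ (a $II^*$ fiber plus section, Figure~\ref{subgraphextendede8plussection}) and an orthogonal rank-$6$ complement $Q$, which reduces the discriminant computation to a $6\times 6$ matrix and delivers part~(ii) automatically.

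Parts (iv) and (v) have more serious gaps. For (iv), "verifying the identification by matching transcendental lattices" is circular: computing the transcendental lattice of the rank-$20$ special member requires knowing its full N\'eron--Severi lattice, which is at least as hard as the problem you are trying to solve. The paper instead writes the special member as an explicit hypersurface in $(\mathbb{P}^1)^3$, takes the genus-$1$ fibration induced by a coordinate projection, computes the four branch points of the generic fiber as a double cover of $\mathbb{P}^1$, and matches the resulting elliptic fibration with an explicit fibration on $\Km(E_i\times E_i)$ from Garbagnati--Sarti; no lattice computation on the special fiber is needed. For (v), the lattice generated by the twelve descended curves $C_i$ and the eight exceptional curves $N_j$ is a \emph{proper} finite-index sublattice of $\NS(X')$: one must adjoin the Nikulin class $\tfrac{1}{2}(N_1+\cdots+N_8)$ and two further half-sums of even eights $\Lambda_1,\Lambda_2$ mixing the $C$'s and $N$'s, whose membership in $\NS(X')$ the paper deduces from Nikulin's results on configurations of $13$ and $14$ disjoint rational curves. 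Your proposal treats the overlattice problem purely as one of \emph{excluding} classes; here one must also \emph{positively identify} the $2$-divisible sums that do lie in $\NS(X')$, and omitting $N,\Lambda_1,\Lambda_2$ would return the wrong discriminant group and hence the wrong $T_{X'}$.
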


The N\'eron-Severi lattice of a triple-double K3 surface $X$ with $\rho(X)=16$ is not included in \cite{garbagnatisarti07,garbagnatisarti09,garbagnatisarti16} for the following reasons:
\begin{itemize}
\item If $G$ is a finite abelian group acting symplectically on a projective K3 surface, then $16=\rk(\Omega_G)+1$ if and only if $G\cong\mathbb{Z}_2^4$ (see \cite[Proposition 5.1]{garbagnatisarti09});
\item $X$ does not admit $\mathbb{Z}_2^4$ symplectic action (see Proposition~\ref{XnoZ24andnoZ23symplecticactions}(i));
\item $X$ is not isomorphic to the minimal resolution of the quotient of a K3 surface by a symplectic action of the group $\mathbb{Z}_2^4$ (see Proposition~\ref{XnoZ24andnoZ23symplecticactions}(ii)).
\end{itemize}

Triple-double K3 surfaces are closely related to two classical examples of K3 surfaces. The first one is the Hirzebruch-Kummer covering $Y$ of exponent $2$ of $\mathbb{P}^2$ branched along six general lines (see \cite[Section 4.3]{catanese}). The covering map $Y\rightarrow\mathbb{P}^2$ is a $\mathbb{Z}_2^5$-cover, and there is a subgroup of $\mathbb{Z}_2^5$ isomorphic to $\mathbb{Z}_2^4$ which acts symplectically on $Y$. The second example is the minimal resolution of the double cover of $\mathbb{P}^2$ branched along six general lines, which we denote by $Z$. Now let $X$ be a triple-double K3 surface. Given the corresponding six lines in $\mathbb{P}^2$, we can consider $Y$ and $Z$ as above. Then in Proposition~\ref{triple-doublek3surfacesandthehirzebruchkummercover} we show that $X$ is isomorphic to $\widetilde{Y/\mathbb{Z}_2^2}$ for an appropriate subgroup $\mathbb{Z}_2^2<\mathbb{Z}_2^4$, and the minimal resolution of the quotient of $X$ by the leftover $\mathbb{Z}_2^4/\mathbb{Z}_2^2\cong\mathbb{Z}_2^2$ symplectic action gives rise to $Z$. Summarizing, we have the following diagram:
\begin{center}
\begin{tikzpicture}[>=angle 90]
\matrix(a)[matrix of math nodes,
row sep=2em, column sep=2em,
text height=1.5ex, text depth=0.25ex]
{Y&Y/\mathbb{Z}_2^2&X&X/\mathbb{Z}_2^2&Z\\
&&\mathbb{P}^2.&&\\};
\path[->] (a-1-1) edge node[left]{}(a-1-2);
\path[->] (a-1-3) edge node[above left]{}(a-1-2);
\path[->] (a-1-3) edge node[left]{}(a-1-4);
\path[->] (a-1-5) edge node[above left]{}(a-1-4);
\path[->] (a-1-1) edge node[above left]{}(a-2-3);
\path[->] (a-1-3) edge node[above left]{}(a-2-3);
\path[->] (a-1-5) edge node[above left]{}(a-2-3);
\end{tikzpicture}
\end{center}
The N\'eron-Severi lattices $\NS(Y),\NS(Z)$ for $\rho(Y)=\rho(Z)=16$ are well known, and the interaction between them is described in \cite{garbagnatisarti16} (see \cite[Section 5]{kloosterman} for $\NS(Z)$).

We remark that similar problems are considered in \cite{bouyer}, but different families of K3 surfaces are studied there. We also mention \cite{nikulin17}, where Nikulin classified the \emph{main part} of the N\'eron-Severi lattice of an arbitrary K\"ahlerian K3 surface (see \cite[Section 3]{nikulin17}) with large enough group of symplectic automorphisms (see \cite[Theorem 2]{nikulin17} for a precise statement). There are more examples of K3 surfaces that are the minimal resolution of a $\mathbb{Z}_2^n$-cover of $\mathbb{P}^2$ branched along six general lines and that are different from $X,X',Y,Z$ (see the notation introduced above). These other K3 surfaces will be studied in a different paper.

In Section~\ref{preliminariesonk3surfacesandsymplecticautomorphisms} we recall some preliminary facts about K3 surfaces, symplectic automorphisms, and even lattices. In Section~\ref{triple-doublek3surfaces} we define triple-double K3 surfaces characterizing them in several different ways. We classify the involutions coming from the $\mathbb{Z}_2^3$-action of the cover. In the same section we also analyze the configuration of $24$ smooth rational curves on a triple-double K3 surface $X$ coming from the preimage of the $(-1)$-curves under the covering map $X\rightarrow\Bl_3\mathbb{P}^2$. In Section~\ref{theneronseverilatticeofatriple-doublek3surface} we show that the sublattice of $\NS(X)$ generated by these curves equals $\NS(X)$ for $X$ with minimal Picard number. We also compute $T_X$ and show that $\Km(E_i\times E_i)$ is a specialization of this $4$-dimensional family. In Section~\ref{three4dimensionalfamiliesofk3surfacesrelated} we relate triple-double K3 surfaces with the Hirzebruch-Kummer covering of exponent $2$ of $\mathbb{P}^2$ branched along six general lines. Finally, in Section~\ref{NSX'andTX'computed} we compute $\NS(X')$ and $T_{X'}$, where $X'$ is the minimal resolution of the quotient of a triple-double K3 surface by an involution coming from the $\mathbb{Z}_2^2$ symplectic action and $\rho(X')=16$. We work over $\mathbb{C}$.

\begin{acknowledgments}
I would like to thank my advisor, Valery Alexeev, for his suggestions and for motivating me in studying this family of K3 surfaces. I am also grateful to Eyal Markman and Robert Varley for many interesting conversations related to this project. A special thanks to Alice Garbagnati and Alessandra Sarti for their helpful feedback. In particular, thanks to Alice Garbagnati for pointing out the copy of $U\oplus E_8$ in Figure~\ref{subgraphextendede8plussection}, which simplified many proofs, and thanks to Alessandra Sarti for suggesting the connection with $\Km(E_i\times E_i)$. Many thanks to Simon Brandhorst and Klaus Hulek for helpful conversations at the Leibniz Universit\"at Hannover. I would like to thank the anonymous referee for very useful and constructive comments. I gratefully acknowledge financial support from the Dissertation Completion Award at the University of Georgia, the NSF grant DMS-1603604, and the Research and Training Group in Algebra, Algebraic Geometry, and Number Theory, at the University of Georgia.
\end{acknowledgments}

%----------------------------------------------------------------------------------------------------

\tableofcontents

%----------------------------------------------------------------------------------------------------

\section{Preliminaries: lattice theory and symplectic automorphisms of K3 surfaces}
\label{preliminariesonk3surfacesandsymplecticautomorphisms}

%-------------------------

\subsection{Even lattices and the discriminant quadratic form}
A \emph{lattice} $L=(L,b_L)$ is a finitely generated free abelian group $L$ together with a symmetric bilinear form $b_L\colon L\times L\rightarrow\mathbb{Z}$. In what follows, we consider \emph{non-degenerate} lattices, which means $b_L$ is a non-degenerate symmetric bilinear form, and we always assume so. If $e_1,\ldots,e_n$ is a $\mathbb{Z}$-basis for $L$, then the \emph{Gram} matrix of $L$ associated to the chosen $\mathbb{Z}$-basis is the matrix $(b_L(e_i,e_j))_{1\leq i,j\leq n}$. The determinant of a Gram matrix of $L$ does not depend on the choice of $\mathbb{Z}$-basis and is called the \emph{discriminant} of $L$. We say that $L$ is \emph{even} if $b_L(x,x)\in2\mathbb{Z}$ for all $x\in L$, and \emph{odd} otherwise. Lattices of discriminant $\pm1$ are called \emph{unimodular}. For the classification of unimodular lattices we refer to \cite[Chapter V]{serre}. Denote by $U$ (resp. $E_8$) the unique even unimodular lattice of signature $(1,1)$ (resp. $(0,8)$).

Let $L$ be a lattice. The dual $L^*=\Hom_\mathbb{Z}(L,\mathbb{Z})$ comes with an induced $\mathbb{Q}$-valued symmetric bilinear form $b_{L^*}$. $L$ embeds into $L^*=\Hom_\mathbb{Z}(L,\mathbb{Z})$ because $b_L$ is non-degenerate, and the quotient $A_L=L^*/L$ is called the \emph{discriminant group} of $L$. We denote by $\ell(A_L)$ the minimum number of generators of $A_L$. The lattice $L$ is \emph{$2$-elementary} if $A_L$ is isomorphic to $\mathbb{Z}_2^\alpha$ for some $\alpha$. The discriminant group $A_L$ comes equipped with the quadratic form
\begin{align*}
q_L\colon A_L&\rightarrow\mathbb{Q}/2\mathbb{Z}\\
x+L&\mapsto b_{L^*}(x,x)~\textrm{mod}~2\mathbb{Z},
\end{align*}
which is called the \emph{discriminant} quadratic form of $L$.

A sublattice $S\subseteq L$ is called \emph{primitive} if the quotient $L/S$ is torsion free, and $L$ is called an \emph{overlattice} of $S$ if $L/S$ is finite. We recall the following standard results for the reader's convenience.
\begin{theorem}{\cite[Corollary 1.13.3]{nikulin80}}
\label{uniquenessofevenlattices}
Let $L$ be an even indefinite lattice of signature $(t_+,t_-)$ such that $t_++t_-\geq2+\ell(A_L)$. Then $L$ is unique up to isometry.
\end{theorem}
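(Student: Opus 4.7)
The plan is to use Nikulin's discriminant form machinery: I read the statement as saying that any two even indefinite lattices $L_1$, $L_2$ sharing the same signature $(t_+,t_-)$ and the same discriminant quadratic form $(A,q)$ must be isometric under the hypothesis. The strategy is to realize each $L_i$ as a primitive sublattice of one and the same even unimodular lattice $M$, and then compare the two embeddings.

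First I would invoke the existence part of the theory (Nikulin, Theorem 1.12.2 and its corollaries): choose auxiliary integers $s_+, s_- \geq 0$ with $s_+ - s_- \equiv -(t_+ - t_-) \pmod{8}$ and large enough that there exists some even lattice $K$ of signature $(s_+, s_-)$ with discriminant form $-q$. The bound $t_+ + t_- \geq 2 + \ell(A_L)$ is precisely what guarantees that a complementary $K$ of this kind is available. The lattice $L_i \oplus K$ then admits a canonical even unimodular overlattice obtained by adjoining the graph of the isomorphism $A_{L_i} \xrightarrow{\sim} A_K(-1)$; by the classification of indefinite even unimodular lattices, this overlattice is the unique even unimodular $M$ of signature $(t_+ + s_+, t_- + s_-)$, independent of $i$.

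Next I would apply the uniqueness of primitive embeddings into an even unimodular lattice (Nikulin, Theorem 1.14.4). Once the ambient $M$ is fixed and the corank of $L_i$ in $M$ is sufficiently large relative to $\ell(A_L)$, any two primitive embeddings $L_i \hookrightarrow M$ differ by an automorphism of $M$. In particular the orthogonal complements $K_i = L_i^{\perp} \subset M$ are mutually isometric, and since each $L_i$ is recovered inside $M$ as $K_i^{\perp}$, one concludes $L_1 \cong L_2$.

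The principal obstacle is the uniqueness-of-embedding step: it is there that the full strength of $t_+ + t_- \geq 2 + \ell(A_L)$ is consumed, via an induction on $\ell(A_L)$ peeling off cyclic summands of the discriminant group. Technically one reduces to showing that spinor genera and genera coincide in the indefinite range, which is Eichler's theorem. The existence step, by contrast, is a straightforward exercise once the congruence conditions on signature and discriminant are met.
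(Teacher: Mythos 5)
The paper does not actually prove this statement: it is quoted verbatim from Nikulin's paper (Corollary 1.13.3 of \cite{nikulin80}) and used as a black box, so there is no in-paper argument to compare yours against. Judged on its own terms, your outline has a genuine gap. The decisive step you invoke --- uniqueness of primitive embeddings of a fixed even lattice into an even unimodular lattice (Nikulin's Theorem 1.14.4, via Proposition 1.15.1) --- is itself proved by applying the uniqueness-in-the-genus statement (Theorem 1.13.2, of which Corollary 1.13.3 is a consequence) to the orthogonal complement of the embedded lattice. So as structured the argument is circular: you derive 1.13.3 from a theorem whose proof already presupposes it. Moreover, even granting 1.14.4, you misapply it: ``any two primitive embeddings $L_i\hookrightarrow M$ differ by an automorphism of $M$'' is a statement about two embeddings of one and the same lattice, whereas your $L_1$ and $L_2$ are a priori non-isomorphic, so the theorem cannot compare an embedding of $L_1$ with an embedding of $L_2$. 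The natural repair --- fix a single auxiliary lattice $K$ with discriminant form $-q_L$, glue to get $L_i\oplus K\subset M$, and compare the two embeddings of $K$ --- runs into the same circle, since uniqueness of the embeddings of $K$ requires exactly the class-number-one statement for the complement, i.e.\ for $L$.

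The correct route, which your last sentence only gestures at, puts Eichler's theorem at the top rather than at the bottom: (1) signature and discriminant quadratic form determine the genus of an even lattice (Nikulin, Corollary 1.9.4); (2) for an indefinite lattice of rank at least $3$, the number of isometry classes in a genus equals the number of spinor genera (Eichler--Kneser, via strong approximation for the spin group); (3) the hypothesis $t_++t_-\geq2+\ell(A_L)$ forces each $p$-adic completion of $L$ to contain a unimodular Jordan block of rank at least $2$, which makes the local spinor norm maps surjective, so the genus consists of a single spinor genus. (The only rank-$2$ case allowed by the hypothesis is $\ell(A_L)=0$, where $L\cong U$ by the classification of even unimodular lattices.) Finally, your claim that the bound $t_++t_-\geq2+\ell(A_L)$ is ``precisely what guarantees'' the existence of the complementary lattice $K$ is off: existence of $K$ is governed by conditions on the invariants of $K$ itself and can always be arranged by taking $K$ of sufficiently large rank; the hypothesis on $L$ is spent entirely in step (3), as you in fact concede later, so the write-up is also internally inconsistent on where the hypothesis is used.
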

\begin{theorem}{\cite[Corollary 1.13.5]{nikulin80}}
\label{splittingofevenlattices}
Let $L$ be an even lattice of signature $(t_+,t_-)$.
\begin{itemize}
\item[(i)] If $t_+\geq1,t_-\geq8$ and $t_++t_-\geq9+\ell(A_L)$, then $L\cong E_8\oplus P$ for some lattice $P$;
\item[(ii)] If $t_+\geq1,t_-\geq1$ and $t_++t_-\geq3+\ell(A_L)$, then $L\cong U\oplus P$ for some lattice $P$.
\end{itemize}
\end{theorem}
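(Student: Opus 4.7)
The plan is to reduce both parts to the uniqueness statement in Theorem \ref{uniquenessofevenlattices}. Since $E_8$ and $U$ are even and unimodular, any lattice of the form $E_8\oplus P$ (resp.\ $U\oplus P$) has discriminant form isomorphic to $q_P$. The strategy is therefore: construct an even lattice $P$ with the signature forced by the decomposition and with $q_P\cong q_L$; then observe that $E_8\oplus P$ (resp.\ $U\oplus P$) is an indefinite even lattice with the same invariants as $L$ that falls inside the range where Theorem \ref{uniquenessofevenlattices} applies, which forces $L\cong E_8\oplus P$ (resp.\ $L\cong U\oplus P$).

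For part (ii), I would produce an even lattice $P$ of signature $(t_+-1,t_--1)$ with $q_P\cong q_L$ by invoking Nikulin's existence theorem for even lattices of prescribed genus (\cite[Theorem 1.10.1]{nikulin80}). The hypothesis $t_++t_-\geq 3+\ell(A_L)$ yields $\rk(P)=t_++t_--2\geq 1+\ell(A_L)$, leaving enough room to realize the discriminant group $A_L$, while the mod $8$ signature congruence $t_+-t_-\equiv\operatorname{sign}(q_L)\pmod 8$ is automatic since $L$ itself realizes $q_L$ with the same signature difference. The lattice $M:=U\oplus P$ is then even and indefinite of signature $(t_+,t_-)$ with $q_M\cong q_L$, and satisfies $t_++t_-\geq 3+\ell(A_L)>2+\ell(A_M)$. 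Theorem \ref{uniquenessofevenlattices} applied to $L$ and $M$ gives $L\cong U\oplus P$.

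Part (i) is parallel: I would take $P$ of signature $(t_+,t_--8)$ with $q_P\cong q_L$, using $t_-\geq 8$ to keep the negative part non-negative and $t_++t_-\geq 9+\ell(A_L)$ to obtain $\rk(P)=t_++t_--8\geq 1+\ell(A_L)$. The signature congruence again passes from $L$ to $P$ because $E_8$ is even unimodular of signature $(0,8)$, so removing an $E_8$ summand does not shift signatures mod $8$. Setting $M:=E_8\oplus P$ produces an indefinite even lattice of signature $(t_+,t_-)$ with $q_M\cong q_L$ and $t_++t_-\geq 9+\ell(A_L)>2+\ell(A_M)$, and Theorem \ref{uniquenessofevenlattices} gives $L\cong E_8\oplus P$.

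The main obstacle is verifying the existence step cleanly. Nikulin's realizability criterion for an even lattice with prescribed $(t_+,t_-,q)$ involves not only the mod $8$ signature congruence and the global rank bound $t_++t_-\geq\ell(A_q)$, but also local Hasse-type conditions at each prime $p$ when the rank is close to $\ell((A_q)_p)$. The strict inequalities $\rk(P)\geq 1+\ell(A_L)$ in our setting put us safely beyond the boundary of these local conditions, so existence goes through; but the careful checking of these local realizability conditions at each prime is where the bookkeeping lives. Once $P$ is produced, the uniqueness theorem closes the argument at once.
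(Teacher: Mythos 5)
Your argument is correct and is exactly the derivation Nikulin himself gives for Corollary 1.13.5 (the paper states this result as a citation without proof): produce an even lattice $P$ of signature $(t_+-1,t_--1)$, resp.\ $(t_+,t_--8)$, with $q_P\cong q_L$ via the existence theorem \cite[Theorem 1.10.1]{nikulin80} --- the local conditions there being vacuous because $\rk(P)\geq 1+\ell(A_L)>\ell((A_L)_p)$ for every prime $p$ --- and then identify $L$ with $U\oplus P$, resp.\ $E_8\oplus P$, by the uniqueness statement of Theorem~\ref{uniquenessofevenlattices}. No gaps.
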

\begin{theorem}{\cite[Proposition 1.4.1(a)]{nikulin80}}
\label{overlatticesinbijectionwithisotropicsubgroups}
Let $L$ be an even lattice. Then there is a $1$-to-$1$ correspondence between overlattices of $L$ and subgroups of $A_L$ which are isotropic with respect to the discriminant quadratic form $q_L$.
\end{theorem}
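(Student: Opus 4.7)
The plan is to set up a natural bijection between (even) overlattices $M\supseteq L$ and isotropic subgroups $H\leq A_L$ via the mutually inverse assignments
$$ M\longmapsto M/L \qquad\text{and}\qquad H\longmapsto \pi^{-1}(H), $$
where $\pi\colon L^{*}\twoheadrightarrow A_L$ is the canonical projection. First I would identify $L$ with its image in $L^{*}$ through the map $x\mapsto b_L(x,-)$; non-degeneracy of $b_L$ makes this an isomorphism after tensoring with $\mathbb{Q}$, so $b_L$ extends uniquely to $L_\mathbb{Q}=L^{*}_\mathbb{Q}$. Any overlattice $M$ of $L$ then lives inside $L_\mathbb{Q}$, and dualizing the finite-index inclusion $L\subseteq M$ gives $M\subseteq M^{*}\subseteq L^{*}$, so $M/L$ is a genuine subgroup of $A_L$.

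The key computation is that $M/L$ is isotropic. For $x\in M$, evenness of $M$ forces $b_{L^*}(x,x)=b_M(x,x)\in 2\mathbb{Z}$, hence $q_L(x+L)=0$. For the reverse direction, given an isotropic subgroup $H\leq A_L$, set $M=\pi^{-1}(H)\subseteq L^{*}$; this is an abelian group of the same rank as $L$. Isotropy gives immediately that $b_{L^*}(x,x)\in 2\mathbb{Z}$ for all $x\in M$. The integrality of the full bilinear form on $M$ then follows by polarization: for $x,y\in M$ one also has $x+y\in M$, so the identity
$$ 2\,b_{L^*}(x,y)=b_{L^*}(x+y,x+y)-b_{L^*}(x,x)-b_{L^*}(y,y) $$
expresses $2\,b_{L^*}(x,y)$ as a sum of three elements of $2\mathbb{Z}$, whence $b_{L^*}(x,y)\in\mathbb{Z}$. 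Consequently $M$ inherits the structure of an even lattice containing $L$ with finite index, so it is an even overlattice.

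Finally, the two maps are mutually inverse by construction: $\pi^{-1}(M/L)=M+L=M$ because $L\subseteq M$, and $\pi^{-1}(H)/L=H$ because $\pi$ is surjective with kernel $L$. The main subtle point, rather than a genuine obstacle, is the interplay between the quadratic form (which governs evenness) and the bilinear form (which governs integrality): one needs the polarization identity together with the fact that $H$ is a \emph{subgroup} of $A_L$—in particular, closed under addition—to upgrade isotropy of $q_L$ on $H$ to $\mathbb{Z}$-valuedness of $b_{L^*}$ on the preimage $\pi^{-1}(H)$.
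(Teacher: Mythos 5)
The paper offers no proof of this statement---it is quoted directly from Nikulin's Proposition 1.4.1(a)---and your argument is precisely the standard one from that source: the assignments $M\mapsto M/L$ and $H\mapsto\pi^{-1}(H)$, with evenness of $M$ giving isotropy of $M/L$ and, conversely, isotropy plus the polarization identity giving integrality and evenness of $\pi^{-1}(H)$. Your proof is correct and complete; the one point worth noting is that the correspondence is really between \emph{even} overlattices and isotropic subgroups (a non-even integral overlattice would give a non-isotropic subgroup), which you correctly build in with your parenthetical ``(even)'' even though the paper's paraphrase omits the adjective.
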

\begin{theorem}{\cite[\S1, $5^\circ$]{nikulin80}}
\label{sublatticeandperpsamediscriminantgroup}
Let $L$ be an even unimodular lattice and $S\subseteq L$ a primitive sublattice. Then $S^\perp\subseteq L$ is also primitive, and the two discriminant groups $A_S,A_{S^\perp}$ are isomorphic. Moreover, $q_S=-q_{S^\perp}$.
\end{theorem}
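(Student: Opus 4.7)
The plan is to build a canonical isomorphism $A_S\cong L/(S\oplus S^\perp)\cong A_{S^\perp}$ via orthogonal projection, and to read off the identity $q_S=-q_{S^\perp}$ from the evenness of $L$. Primitivity of $S^\perp$ comes essentially for free: $S^\perp$ is the kernel of the restriction map $L\to S^*$, $x\mapsto b_L(x,\cdot)|_S$, and the kernel of a homomorphism from a free abelian group into a torsion-free group is automatically primitive.

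For the main construction, I identify $L^*$ with $L$ via $b_L$ (using unimodularity) and similarly view $S^*\subseteq S\otimes\mathbb{Q}$ and $(S^\perp)^*\subseteq S^\perp\otimes\mathbb{Q}$. Let $\pi_S$ and $\pi_{S^\perp}$ denote the orthogonal projections of $L\otimes\mathbb{Q}$ onto $S\otimes\mathbb{Q}$ and $S^\perp\otimes\mathbb{Q}$. For $x\in L$ and $s\in S$ one has $b_L(\pi_S(x),s)=b_L(x,s)\in\mathbb{Z}$, so $\pi_S(L)\subseteq S^*$, and symmetrically for $S^\perp$. The kernel of the composition $L\xrightarrow{\pi_S}S^*\twoheadrightarrow A_S$ consists of those $x$ with $\pi_S(x)\in S$; then $\pi_{S^\perp}(x)=x-\pi_S(x)$ lies in $L\cap(S^\perp\otimes\mathbb{Q})=S^\perp$ (using that $S^\perp$ is primitive), so the kernel equals $S\oplus S^\perp$. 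For surjectivity, given $v\in S^*$, I regard the functional $s\mapsto b_L(v,s)$ on $S$, extend it arbitrarily to a $\mathbb{Z}$-valued functional on $L$, and invoke unimodularity to represent it as $b_L(x,\cdot)$ for some $x\in L$; then $\pi_S(x)=v$. This yields an isomorphism $\bar\pi_S\colon L/(S\oplus S^\perp)\xrightarrow{\sim}A_S$, and the symmetric argument produces $\bar\pi_{S^\perp}\colon L/(S\oplus S^\perp)\xrightarrow{\sim}A_{S^\perp}$, so that $A_S\cong A_{S^\perp}$.

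For the sign relation between discriminant forms, for $x\in L$ the orthogonal decomposition $x=\pi_S(x)+\pi_{S^\perp}(x)$ gives $b_L(x,x)=b(\pi_S(x),\pi_S(x))+b(\pi_{S^\perp}(x),\pi_{S^\perp}(x))$, where the two rational summands are precisely the representatives of $q_S$ and $q_{S^\perp}$ on the respective classes. Since $L$ is even, the left side lies in $2\mathbb{Z}$, and reducing modulo $2\mathbb{Z}$ yields $q_S\circ\bar\pi_S+q_{S^\perp}\circ\bar\pi_{S^\perp}\equiv 0$, that is, $q_S=-q_{S^\perp}$ under the identification built above. The only delicate point in this plan is the surjectivity of $\pi_S\colon L\to S^*$, which is exactly where unimodularity is essential; the rest is routine bookkeeping with orthogonal projections.
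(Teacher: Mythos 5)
The paper does not prove this statement; it is quoted verbatim from Nikulin with a citation, so there is no in-paper argument to compare against. Your proof is correct and is essentially the standard one from Nikulin's paper: the orthogonal projections $\pi_S,\pi_{S^\perp}$ induce isomorphisms $L/(S\oplus S^\perp)\xrightarrow{\sim}A_S$ and $L/(S\oplus S^\perp)\xrightarrow{\sim}A_{S^\perp}$, and evenness of $L$ applied to $b_L(x,x)=b(\pi_S(x),\pi_S(x))+b(\pi_{S^\perp}(x),\pi_{S^\perp}(x))$ gives $q_S=-q_{S^\perp}$. The only step you gloss over is the extension of the functional $s\mapsto b_L(v,s)$ from $S$ to a $\mathbb{Z}$-valued functional on $L$: this is where primitivity of $S$ is used (it makes $L/S$ free, so $S$ is a direct summand and $\Hom_{\mathbb{Z}}(L,\mathbb{Z})\to\Hom_{\mathbb{Z}}(S,\mathbb{Z})$ is surjective); it would be worth saying so explicitly, but it is not a gap.
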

\begin{theorem}{\cite[Theorem 3.6.2]{nikulin80}}
\label{uniqueness2-elementarylattices}
An indefinite $2$-elementary even lattice $L$ is determined, up to isometry, by its signature, $\ell(A_L)$, and $\delta(L)$ (for the definition of $\delta(L)$ we refer to \cite[\S3, $6^\circ$]{nikulin80}).
\end{theorem}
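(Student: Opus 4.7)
The plan is to derive the statement from Theorem~\ref{uniquenessofevenlattices} (Nikulin's uniqueness theorem for indefinite even lattices of sufficiently large rank relative to $\ell(A_L)$) together with a classification of the discriminant quadratic forms that can arise on a $2$-elementary discriminant group. Since $L$ is $2$-elementary, $A_L\cong\mathbb{Z}_2^\alpha$ with $\alpha=\ell(A_L)$, and $q_L$ is a non-degenerate $\mathbb{Q}/2\mathbb{Z}$-valued quadratic form on this group.

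First I would classify such forms. Every non-degenerate $\mathbb{Q}/2\mathbb{Z}$-valued quadratic form on $\mathbb{Z}_2^\alpha$ decomposes as an orthogonal sum of three kinds of elementary blocks: a ``hyperbolic plane'' $u$ on $\mathbb{Z}_2^2$ and an ``elliptic plane'' $v$ on $\mathbb{Z}_2^2$ (both $\mathbb{Z}/2\mathbb{Z}$-valued), and rank-one pieces $w_k$ on $\mathbb{Z}_2$ with $q(1)\equiv k/2\pmod{2\mathbb{Z}}$ for odd $k$ (which take a value outside $\mathbb{Z}/2\mathbb{Z}$). By definition, $\delta(L)=0$ precisely when $q_L$ admits no $w_k$ summand, i.e.\ when $q_L$ is $\mathbb{Z}/2\mathbb{Z}$-valued; otherwise $\delta(L)=1$. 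A Witt-type cancellation argument in this category of forms shows that the orthogonal type is determined by the triple $(\alpha,\delta,\sigma)$, where $\sigma\in\mathbb{Z}/8\mathbb{Z}$ is the Gauss-sum invariant. For a form realized as the discriminant form of a lattice, Milgram's formula forces $\sigma\equiv t_+-t_-\pmod{8}$, so once the signature $(t_+,t_-)$ is given, the isomorphism class of $q_L$ is determined by $(\alpha,\delta)$ alone.

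Second I would invoke Theorem~\ref{uniquenessofevenlattices}. Any $2$-elementary lattice has $r:=t_++t_-\geq\alpha$, since $A_L=L^*/L$ is a quotient of a rank-$r$ free abelian group by a sublattice of the same rank. When $r\geq\alpha+2$, Theorem~\ref{uniquenessofevenlattices} applies directly and gives uniqueness of $L$ from $(t_+,t_-,q_L)$, hence from $(t_+,t_-,\alpha,\delta)$ by the first step. The boundary cases $r=\alpha$ and $r=\alpha+1$ must be dealt with by hand: here $L$ is forced to be a small orthogonal sum of standard indefinite $2$-elementary blocks such as $U$, $U(2)$, $\langle\pm2\rangle$, and $D_4$, and the $(\alpha,\delta)$ data together with the indefiniteness hypothesis pins down which combination occurs.

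The main obstacle is the first step: establishing the cancellation identities for $\mathbb{Q}/2\mathbb{Z}$-valued quadratic forms on $\mathbb{Z}_2^\alpha$, in particular an isomorphism of the shape $u\oplus w_k\cong v\oplus w_{k'}$ for appropriate odd $k,k'$, which collapses the naive three-parameter inventory of summand multiplicities down to just $(\alpha,\delta)$ once $\sigma$ is fixed. The small-rank boundary analysis in the second step, by contrast, is a routine finite check.
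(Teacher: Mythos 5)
This statement is not proved in the paper at all: it is quoted verbatim from Nikulin (\cite[Theorem 3.6.2]{nikulin80}), so there is no in-paper argument to compare against. That said, your strategy --- classify the finite quadratic forms on $\mathbb{Z}_2^\alpha$ by the triple $(\alpha,\delta,\sigma)$ with $\sigma$ the Gauss-sum signature mod $8$, use the signature formula to eliminate $\sigma$ in favour of $t_+-t_-$, and then apply Theorem~\ref{uniquenessofevenlattices} away from the boundary ranks --- is essentially Nikulin's own route (his Theorem 3.6.1 plus his uniqueness results from \S 1.13), so the architecture is sound.

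Two concrete corrections, however. First, the cancellation identity you single out as the key obstacle, $u\oplus w_k\cong v\oplus w_{k'}$, is false: the signature mod $8$ is an isomorphism invariant of finite quadratic forms, and $\sigma(u)=0$, $\sigma(v)=4$, $\sigma(w_{\pm1})=\pm1$, so the two sides differ in signature by $4\pm1\mp1\in\{2,4,6\}$, never $0$. The relations that actually collapse the inventory are $u\oplus u\cong v\oplus v$ and $w_\epsilon\oplus w_\epsilon\oplus w_{-\epsilon}\cong u\oplus w_\epsilon$ (equivalently $w_\epsilon^{\oplus3}\cong v\oplus w_{-\epsilon}$); the latter can be read off from the lattice isometry $\langle2\rangle^{\oplus2}\oplus\langle-2\rangle\cong U(2)\oplus\langle2\rangle$, both sides being $N(2)$ for $N$ the odd indefinite unimodular lattice of signature $(2,1)$. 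With these relations in hand, a comparison of Gauss sums as above does show that $(\alpha,\delta,\sigma)$ is a complete invariant. Second, the boundary analysis is lighter than you suggest but for a reason worth recording: the case $r=\alpha+1$ is vacuous, since over $\mathbb{Z}_2$ an even $2$-elementary lattice splits as $N_0\perp N_1(2)$ with $N_0,N_1$ unimodular, $\mathrm{rank}(N_1)=\alpha$, and $N_0$ even, forcing $r-\alpha$ to be even. For $r=\alpha$ one has $L=2L^*$, hence $L\cong N(2)$ with $N$ unimodular; indefinite unimodular lattices are determined by signature and parity, and the parity of $N$ is precisely $\delta(L)$, which settles this case without invoking Theorem~\ref{uniquenessofevenlattices}.
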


%-------------------------

\subsection{K3 surfaces}
\label{k3surfacesbasics}
A \emph{K3 surface} $X$ is a smooth irreducible projective $2$-dimensional variety with $K_X\sim0$ and $h^1(X,\mathcal{O}_X)=0$. On a K3 surface $X$, numerical algebraic, and linear equivalence between divisors coincide (see \cite[Chapter 1, Proposition 2.4]{huybrechts}), and therefore
\begin{equation*}
\Pic(X)=\NS(X)=\Num(X).
\end{equation*}
In the case of a K3 surface $X$ one has that $\NS(X)$ is a primitive sublattice of $H^2(X;\mathbb{Z})$ (the fact that $h^1(X,\mathcal{O}_X)=0$ implies that $\NS(X)$ embeds into $H^2(X;\mathbb{Z})$, and $H^2(X;\mathbb{Z})/\NS(X)$ embeds into $H^2(X,\mathcal{O}_X)\cong\mathbb{C}$). Recall that $H^2(X;\mathbb{Z})$ is an even unimodular lattice of signature $(3,19)$, and is therefore isometric to $U^{\oplus3}\oplus E_8^{\oplus2}$. It follows that the \emph{transcendental} lattice $T_X=\NS(X)^\perp$ has the same discriminant group as $\NS(X)$ (see Theorem~\ref{sublatticeandperpsamediscriminantgroup}). The \emph{Picard number} of a K3 surface $X$ is the rank of $\NS(X)$, and it is denoted by $\rho(X)$.

%-------------------------

\subsection{Symplectic automorphisms of K3 surfaces}
\label{definitionnikulininvolution}
Let $X$ be a K3 surface. An automorphism $f$ of $X$ is called \emph{symplectic} if the induced map $f^*\colon H^0(X,\omega_X)\rightarrow H^0(X,\omega_X)$ is the identity. The effective (left) action of a group $G$ on the K3 surface $X$ is called \emph{symplectic} if for all $g\in G$ the automorphism of $X$ given by $x\mapsto g\cdot x$ is symplectic. It is well known that a finite group $G$ acts symplectically on $X$ if and only if the minimal resolution $\widetilde{X/G}$ of $X/G$ is again a K3 surface (see \cite[(8.10) Proposition(1)]{mukai} or \cite[Theorem 0.4.2]{garbagnati}).

Let $G$ be a finite abelian group. In \cite{nikulin79} Nikulin shows that $G$ acts symplectically on a K3 surface $X$ if and only if $G$ is one of the following groups:
\begin{gather*}
\mathbb{Z}_n,~2\leq n\leq8,~\mathbb{Z}_m^2,~m=2,3,4,\\
\mathbb{Z}_2\oplus\mathbb{Z}_4,~\mathbb{Z}_2\oplus\mathbb{Z}_6,~\mathbb{Z}_2^\ell,~\ell=3,4.
\end{gather*}
Moreover, the action of $G$ on $H^2(X;\mathbb{Z})$ is unique up to isometry, hence it depends only on $G$ and not on the K3 surface $X$. Denote by $\Omega_G$ the orthogonal complement of the fixed sublattice $H^2(X;\mathbb{Z})^G$. The next theorem briefly summarizes many results of Garbagnati and Sarti which are fundamental for the current paper.
\begin{theorem}{\cite{garbagnatisarti09}}
\label{garbagnatisartiresults}
Let $G$ be a finite abelian group that acts symplectically on a K3 surface $X$. Then the orthogonal complement of $\Omega_G$ in $H^2(X;\mathbb{Z})$ is computed in \cite[Proposition 5.1]{garbagnatisarti09}. Moreover, if $\rho(X)=\rk(\Omega_G)+1$, then $\NS(X)$ is given in \cite[Proposition 6.2]{garbagnatisarti09}.
\end{theorem}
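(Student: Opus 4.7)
Since Theorem~\ref{garbagnatisartiresults} is explicitly attributed to \cite{garbagnatisarti09}, the ``proof'' in this paper is really an invocation of those results. Nevertheless, let me sketch the mathematical content behind the two assertions, using the lattice-theoretic machinery already collected in Section~\ref{preliminariesonk3surfacesandsymplecticautomorphisms}.

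For the first assertion (identification of $\Omega_G^{\perp}\subseteq H^2(X;\mathbb{Z})$), the starting point is Nikulin's result recalled just before the theorem: the $G$-action on $H^2(X;\mathbb{Z})$ is unique up to isometry, so the invariant sublattice $H^2(X;\mathbb{Z})^{G}$ and its orthogonal complement $\Omega_G$ depend only on the abstract group $G$. One first computes $\Omega_G$ directly for each of the finitely many groups listed in Nikulin's classification, using the fixed-point data of the action (the number and type of fixed points of each non-trivial element, by the holomorphic Lefschetz formula). This pins down the signature, rank, and discriminant form of $\Omega_G$, hence of $\Omega_G^{\perp}$ via Theorem~\ref{sublatticeandperpsamediscriminantgroup} (since $H^2(X;\mathbb{Z})$ is even unimodular). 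Because $\Omega_G^{\perp}$ is indefinite of signature $(3, 19-\rk(\Omega_G))$, the hypothesis $t_+ + t_- \geq 2+\ell(A_{\Omega_G^{\perp}})$ of Theorem~\ref{uniquenessofevenlattices} holds in each case, so the isometry class of $\Omega_G^{\perp}$ is determined by its invariants, giving the explicit form in \cite[Proposition 5.1]{garbagnatisarti09}.

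For the second assertion, assume $\rho(X)=\rk(\Omega_G)+1$. Since $\Omega_G\subseteq\NS(X)$ primitively and $\Omega_G$ is negative definite, there is a class $L\in\NS(X)$ with $L\cdot\Omega_G=0$ and $L^2=2d>0$, so $\NS(X)$ is a finite-index overlattice of $\Omega_G\oplus\langle 2d\rangle$. One then classifies such overlattices via Theorem~\ref{overlatticesinbijectionwithisotropicsubgroups}: they correspond to isotropic subgroups of the discriminant group $A_{\Omega_G}\oplus A_{\langle 2d\rangle}=A_{\Omega_G}\oplus\mathbb{Z}_{2d}$ with respect to the discriminant form. The primitivity of $\NS(X)\subseteq H^2(X;\mathbb{Z})$ and the requirement that $T_X=\NS(X)^{\perp}$ exist inside $U^{\oplus3}\oplus E_8^{\oplus 2}$ constrain $d$ and the isotropic subgroup. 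The existence of a projective K3 surface realizing each allowed pair $(d,\text{isotropic subgroup})$ can then be verified by a geometric construction (e.g., the explicit elliptic or double-cover models used in \cite{garbagnatisarti09}), yielding the table in \cite[Proposition 6.2]{garbagnatisarti09}.

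The main obstacle is the case-by-case nature of the second part: the classification depends sensitively on $G$, and for each $G$ one has to enumerate the admissible isotropic subgroups of $A_{\Omega_G}\oplus\mathbb{Z}_{2d}$ and match them with a geometric construction to ensure non-emptiness of the corresponding family. The first part, by contrast, is a relatively clean application of the Nikulin uniqueness machinery once $\Omega_G$ itself has been computed.
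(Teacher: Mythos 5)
You correctly identify that the paper offers no proof of this statement: it is a pure citation of \cite[Propositions 5.1 and 6.2]{garbagnatisarti09}, recorded as a theorem only so it can be referenced later (e.g., in Propositions~\ref{XnoZ24andnoZ23symplecticactions} and~\ref{X'hasZ24symplecticaction}). So there is nothing in the paper to compare your argument against, and your sketch should be judged only as a reconstruction of the cited proofs. On that score it is broadly sound but diverges from what Garbagnati and Sarti actually do on the first assertion: rather than pinning down $\Omega_G$ abstractly via fixed-point data and then invoking Theorem~\ref{uniquenessofevenlattices}, they exploit Nikulin's uniqueness of the $G$-action on $H^2(X;\mathbb{Z})$ to reduce to a single explicit model for each $G$ --- an elliptic K3 surface on which $G$ acts by translation by torsion sections --- and compute $\Omega_G$ and $\Omega_G^{\perp}$ directly inside the known N\'eron--Severi and transcendental lattices of that model. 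Your account of the second assertion matches theirs in substance: with $\rho(X)=\rk(\Omega_G)+1$ one writes $\NS(X)$ as a finite-index overlattice of $\langle 2d\rangle\oplus\Omega_G$ and classifies the admissible isotropic subgroups of the discriminant group via Theorem~\ref{overlatticesinbijectionwithisotropicsubgroups}, with the answer depending on $d$; the only refinement worth adding is that the dichotomy in \cite[Proposition 6.2]{garbagnatisarti09} is governed by congruence conditions on $d$ rather than by a separate geometric realizability check for each case. None of this affects the present paper, which uses the cited results as a black box.
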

\begin{remark}
The results in Theorem~\ref{garbagnatisartiresults} for $G=\mathbb{Z}_2$ go back to \cite{vangeemensarti,morrison}, and the cases $G=\mathbb{Z}_p$ for $p=3,5,7$ appeared in \cite{garbagnatisarti07}.
\end{remark}
\begin{definition}
A symplectic automorphism of order $2$ on a K3 surface $X$ is called \emph{symplectic involution} or \emph{Nikulin involution}. A Nikulin involution on $X$ has exactly eight fixed points (see \cite[Section 0.1]{mukai}), and in Proposition~\ref{involutionwithexactlyeightisolatedfixedpointsissymplectic} (which is well known) we show that an involution on $X$ with exactly eight fixed points is necessarily symplectic.
\end{definition}
\begin{proposition}
\label{involutionwithexactlyeightisolatedfixedpointsissymplectic}
Let $X$ be a K3 surface and let $\iota$ be an involution on $X$ with exactly eight fixed points. Then $\iota$ is a Nikulin involution.
\end{proposition}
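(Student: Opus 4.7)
The plan is to analyze the local action of $\iota$ at each fixed point and use the fact that $\iota^2=\id$ forces the eigenvalues of $d\iota_p$ to be $\pm1$. First, let $\omega$ be a nowhere-vanishing holomorphic $2$-form on $X$. Since $\iota$ is an involution, $\iota^*\omega=\lambda\omega$ for some $\lambda\in\{1,-1\}$. Our goal is to rule out the possibility $\lambda=-1$, because then by definition $\iota$ acts as the identity on $H^0(X,\omega_X)$, i.e.\ $\iota$ is symplectic and hence Nikulin.

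Next, at a fixed point $p\in X$, the differential $d\iota_p\colon T_pX\rightarrow T_pX$ is an involutive linear map, so it is diagonalizable with eigenvalues in $\{+1,-1\}$. Since $\omega$ is a trivializing section of $\det T^*X$, evaluating $\iota^*\omega=\lambda\omega$ at $p$ gives $\det(d\iota_p)=\lambda$. I would then split into the two possible cases for the eigenvalues of $d\iota_p$:
\begin{itemize}
\item If $d\iota_p$ has eigenvalues $(1,1)$, then $\iota$ is the identity on a neighborhood of $p$ (e.g.\ by linearizability of finite-order holomorphic actions at a fixed point, via the Cartan lemma), contradicting the hypothesis that $p$ is isolated in the fixed locus.
\item If $d\iota_p$ has eigenvalues $(-1,-1)$, then $\det(d\iota_p)=1$, so $\lambda=1$; moreover $p$ is an isolated fixed point.
\item If $d\iota_p$ has eigenvalues $(1,-1)$, then $\det(d\iota_p)=-1$, so $\lambda=-1$; by linearization, the fixed locus near $p$ is a smooth $1$-dimensional analytic subset, so $p$ lies on a smooth fixed curve.
\end{itemize}

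From here the conclusion is immediate: since $\mathrm{Fix}(\iota)$ consists of exactly eight points and in particular is $0$-dimensional, the third case cannot occur at any fixed point, so we are always in the second case. Hence $\lambda=1$, meaning $\iota^*\omega=\omega$ and $\iota$ is symplectic, i.e.\ a Nikulin involution.

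The only nontrivial ingredient is the linearization of $\iota$ at a fixed point used above; this is a standard application of Cartan's lemma on the linearizability of holomorphic actions of finite groups at a fixed point (one averages a chart against the group $\langle\iota\rangle$ to find local coordinates in which $\iota$ acts linearly as $d\iota_p$). With that in hand, no further input is required, and in particular I do not need to invoke Nikulin's classification of non-symplectic involutions on K3 surfaces.
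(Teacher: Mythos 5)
Your proof is correct, but it takes a genuinely different route from the paper. You argue locally: linearize $\iota$ at a fixed point via Cartan's lemma, note that $\iota^*\omega=\lambda\omega$ forces $\det(d\iota_p)=\lambda$ at every fixed point $p$, and observe that the eigenvalue pattern $(1,-1)$ (the only one compatible with $\lambda=-1$ at a fixed point) would produce a one-dimensional fixed locus through $p$, which is excluded since $\mathrm{Fix}(\iota)$ is a nonempty finite set; hence $\lambda=1$ and $\iota$ is symplectic by definition. The paper instead argues globally: it blows up the eight fixed points, realizes $\Bl_8X\rightarrow\widetilde{X/\iota}$ as a double cover branched along the eight exceptional $(-2)$-curves, and uses the canonical bundle formula, Noether's formula, and the decomposition of $\pi_*\mathcal{O}$ to show that $\widetilde{X/\iota}$ is a K3 surface, then invokes the standard equivalence that a finite group acts symplectically if and only if the minimal resolution of the quotient is again K3. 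Your approach is more elementary and self-contained (it needs neither that equivalence nor Noether's formula), and it isolates exactly where the hypothesis of isolated fixed points enters. The paper's computation, on the other hand, produces the quotient K3 surface and the even eight of branch curves explicitly, which is reused later (Example~\ref{exampleofeveneight} and the analysis of $X'$ in Section~\ref{NSX'andTX'computed}), so it is not wasted effort in context. One cosmetic remark: you announce ``the two possible cases'' and then list three; and in the $(1,1)$ case it is worth saying explicitly that local triviality of $\iota$ propagates to $\iota=\id_X$ by the identity principle, or, as you do, simply that it contradicts $p$ being isolated.
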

\begin{proof}
If $Y=X/\iota$, then we show that $\widetilde{Y}$ is a K3 surface. Let $X'\rightarrow X$ be the blow up at the eight fixed points of $\iota$ and denote by $E_1,\ldots,E_8$ the corresponding exceptional divisors. Then we have the following commutative diagram:
\begin{center}
\begin{tikzpicture}[>=angle 90]
\matrix(a)[matrix of math nodes,
row sep=2em, column sep=2em,
text height=1.5ex, text depth=0.25ex]
{X'&\widetilde{Y}\\
X&Y,\\};
\path[->] (a-1-1) edge node[above]{$\pi$}(a-1-2);
\path[->] (a-1-1) edge node[left]{}(a-2-1);
\path[->] (a-1-2) edge node[left]{}(a-2-2);
\path[->] (a-2-1) edge node[left]{}(a-2-2);
\end{tikzpicture}
\end{center}
where $\pi$ is the $\mathbb{Z}_2$-cover ramified at $E_1+\ldots+E_8$. Therefore,
\begin{equation*}
E_1+\ldots+E_8=K_{X'}\sim\pi^*(K_{\widetilde{Y}})+E_1+\ldots+E_8\implies\pi^*(K_{\widetilde{Y}})\sim0.
\end{equation*}
We can conclude that $2K_{\widetilde{Y}}\sim\pi_*\pi^*(K_{\widetilde{Y}})\sim0$.

Since $\chi(\mathcal{O}_{X'})=\chi(\mathcal{O}_X)=2$, Noether's formula gives that the topological Euler characteristic $\chi_{\topo}(X')$ of $X'$ equals $32$. From this we can argue that $\chi_{\topo}(\widetilde{Y})=24$. Noether's formula again applied to $\widetilde{Y}$ implies that $\chi(\mathcal{O}_{\widetilde{Y}})=2$.

If $B$ is the branch locus of $\pi$, then $\pi_*\mathcal{O}_{X'}=\mathcal{O}_{\widetilde{Y}}\oplus\mathcal{O}_{\widetilde{Y}}(-B/2)$. This guarantees that $q(\widetilde{Y})=0$, and hence $p_g(\widetilde{Y})=1$ because $\chi(\mathcal{O}_{\widetilde{Y}})=2$.

Summing up, $2K_{\widetilde{Y}}\sim0,q(\widetilde{Y})=0$, and $p_g(\widetilde{Y})=1$, which is enough to show that $\widetilde{Y}$ is a K3 surface.
\end{proof}

%-------------------------

\subsection{Even eights on a K3 surface}
\begin{definition}
Let $R_1,\ldots,R_m$ be smooth disjoint rational curves on a K3 surface $X$. Then $\{R_1,\ldots,R_m\}$ is called an \emph{even set} if $R_1+\ldots+R_m$ is divisible by $2$ in $\NS(X)$.
\end{definition}
The following result of Nikulin is a combination of \cite[Lemma 3]{nikulin75} and \cite[Corollary 1]{nikulin75}.
\begin{theorem}{\cite{nikulin75}}
\label{theoremaboutcardinalitiesofevensets}
Let $\{R_1,\ldots,R_m\}$ with $m\geq1$ be an even set on a K3 surface. Then $m=8$ or $16$.
\end{theorem}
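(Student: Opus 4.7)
The plan is to realize the even set as the branch locus of a double cover of $X$ and then apply the classification of smooth projective surfaces with trivial canonical class. Since $\NS(X) = \Pic(X)$ on a K3 surface, the assumption $R_1 + \cdots + R_m \sim 2L$ for some divisor class $L$ gives a line bundle $\mathcal{L}$ with $\mathcal{L}^{\otimes 2} \cong \mathcal{O}_X(R_1 + \cdots + R_m)$, and hence a double cover $\pi\colon\widetilde{X}\to X$ branched along the smooth divisor $\sum R_i$, so $\widetilde{X}$ is smooth. A short calculation using $R_i^2=-2$ and the disjointness of the $R_i$ gives $L^2 = -m/2$.

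Next I would analyze the geometry over each branch curve. Because $R_i$ lies in the branch locus, $\pi^{-1}(R_i) = 2E_i$ for a smooth rational curve $E_i$, and from $(2E_i)^2 = \pi^*(R_i)^2 = -4$ one gets $E_i^2 = -1$. Contracting the disjoint $(-1)$-curves $E_1, \ldots, E_m$ yields a smooth projective surface $Y$. Using $K_{\widetilde{X}} = \pi^*(K_X + L) = \pi^*L$ together with the blow-down formula, I would compute $K_Y^2 = 0$ and $\chi(\mathcal{O}_Y) = 4 - m/4$. Integrality of $\chi(\mathcal{O}_Y)$ already forces $m \equiv 0 \pmod 4$. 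Moreover, contracting the $R_i$ on $X$ produces a surface $X'$ with only rational double points, the induced map $Y \to X'$ is étale in codimension one, and so $K_Y$ is the pullback of $K_{X'} \sim 0$; hence $K_Y \sim 0$ and not merely numerically trivial.

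With $K_Y$ trivial, $Y$ smooth, and $Y$ connected (which follows from $\mathcal{L}$ being non-torsion, since $\sum R_i$ is a non-zero effective divisor), the Bogomolov decomposition forces $Y$ to be either a K3 surface, giving $\chi(\mathcal{O}_Y) = 2$ and hence $m = 8$, or an abelian surface, giving $\chi(\mathcal{O}_Y) = 0$ and hence $m = 16$.

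The main obstacle is the step identifying $K_Y$ as trivial in $\Pic(Y)$ rather than only numerically trivial: without this strengthening, $Y$ could a priori be Enriques or bielliptic, and the value $m = 12$ (for which $\chi(\mathcal{O}_Y) = 1$ is compatible with an Enriques quotient) would not be immediately ruled out. Arguing via the crepant double cover $Y \to X'$ of a surface with Du Val singularities and $K_{X'} \sim 0$ is the cleanest way to close this gap.
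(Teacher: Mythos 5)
Your proof is correct, but it is worth noting that the paper does not actually prove this statement: it is quoted from Nikulin \cite{nikulin75} as a black box, and the only argument the paper supplies in this direction is Observation~\ref{keyfact}, an elementary Riemann--Roch argument showing that an even set cannot have cardinality $4$ (the only piece actually used later). Your double-cover argument is essentially the classical proof of the full statement: take the double cover $\pi\colon\widetilde{X}\to X$ branched along $\sum R_i$, blow down the resulting $(-1)$-curves $E_i$ to get $Y$ with $\chi(\mathcal{O}_Y)=4-m/4$, and identify $Y$ as K3 or abelian. The details check out: $L^2=-m/2$ together with evenness of $\NS(X)$ already forces $4\mid m$; connectedness follows since $h^0(\mathcal{L}^{-1})=0$ (as $2L$ is a nonzero effective divisor); and for the key triviality of $K_Y$ your route through the crepant cover $Y\to X'$ of the $A_1$-singular contraction works, though it is not strictly needed --- the Hurwitz formula gives $K_{\widetilde{X}}\sim\pi^*K_X+\sum E_i=\sum E_i$ as an honest linear equivalence with the ramification divisor, so $p^*K_Y\sim 0$ and hence $K_Y\sim 0$ directly, with no torsion ambiguity. (Two cosmetic points: the relevant classification is the Enriques--Kodaira classification of minimal surfaces with trivial canonical bundle rather than the ``Bogomolov decomposition''; and a surface with $K_Y\sim 0$ is automatically minimal by adjunction, so no separate minimality check is needed.) Comparing the two approaches: the paper's Observation~\ref{keyfact} is shorter and purely lattice/Riemann--Roch theoretic, but it only excludes $m<8$ and cannot rule out $m=12$ or $m=20$; your argument proves the full dichotomy $m\in\{8,16\}$ and has the added benefit of identifying the geometric meaning of the two cases (Nikulin involution versus Kummer structure), which is exactly the mechanism exploited elsewhere in the paper, e.g.\ in Proposition~\ref{involutionwithexactlyeightisolatedfixedpointsissymplectic} and Example~\ref{exampleofeveneight}.
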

\begin{observation}
\label{keyfact}
In Theorem~\ref{theoremaboutcardinalitiesofevensets}, it is simple to show that an even set on a K3 surface $X$ cannot have cardinality $4$ (this is actually what we need in the current paper). By contradiction, let $\{C_1,C_2,C_3,C_4\}$ be an even set on $X$ and let $C=\frac{1}{2}(C_1+C_2+C_3+C_4)$. Using the Riemann-Roch theorem, we know that $C$ or $-C$ is effective because $C^2=-2$. Therefore $C$ is effective, because the ample class on $X$ intersects positively the curves $C_i$. But $C_i\subset\Supp(C)$ for all $i\in\{1,2,3,4\}$ because $C_i\cdot C<0$. This implies that $C-C_1-C_2-C_3-C_4=-C$ is also effective. But then $C=0$, which cannot be.
\end{observation}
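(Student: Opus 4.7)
The plan is to argue by contradiction, by analyzing the numerical properties of the class $C=\tfrac{1}{2}(C_1+C_2+C_3+C_4)$ which, by the assumption that $\{C_1,C_2,C_3,C_4\}$ is an even set, sits in $\NS(X)$. Since the $C_i$ are disjoint smooth rational curves, $C_i^2=-2$ and $C_i\cdot C_j=0$ for $i\neq j$, giving $C^2=-2$ after a one-line calculation.

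Next I would apply Riemann--Roch on the K3 surface. Since $\chi(\mathcal{O}_X(C))=2+\tfrac{1}{2}C^2=1$, at least one of $C$ and $-C$ is effective. To pin down which one, I would fix an ample class $H$: because each irreducible curve $C_i$ satisfies $H\cdot C_i>0$, we have $H\cdot C>0$, which rules out $-C$ being effective. Hence $C$ itself is represented by an effective divisor.

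The last step is to compute $C\cdot C_i=\tfrac{1}{2}C_i^2=-1$ for each $i$. A standard fact, namely that an irreducible curve with strictly negative intersection against an effective divisor $D$ must appear in the support of $D$, then forces each $C_i$ to be a component of $C$. Consequently $C-(C_1+C_2+C_3+C_4)=-C$ is effective as well, so both $C$ and $-C$ are effective, forcing $C=0$; this contradicts $C^2=-2$.

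I do not expect any genuine obstacles here: the whole argument is a short numeric chase in $\NS(X)$. The only minor subtlety is remembering that Riemann--Roch on a K3 gives $\chi(\mathcal{O}_X(D))=2+\tfrac{1}{2}D^2$ and invoking the elementary support argument, both of which are standard and essentially immediate. Note also that the same strategy fails for $m=8$, because then $C^2=-4$ and the parity of the intersections $C\cdot C_i=-1$ no longer combines with Riemann--Roch to give a contradiction, explaining why the cardinality $8$ case in Theorem~\ref{theoremaboutcardinalitiesofevensets} really does occur.
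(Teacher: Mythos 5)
Your proposal is correct and follows the paper's own argument essentially verbatim: the same Riemann--Roch step showing $C$ or $-C$ is effective, the same use of an ample class to select $C$, and the same support argument forcing $-C$ to be effective as well. The closing remark about why the method breaks down for $m=8$ (since then $C^2=-4$ and $\chi(\mathcal{O}_X(C))=0$) is a nice addition but does not change the substance.
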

\begin{definition}
An \emph{even eight} on a K3 surface is an even set of cardinality $8$.
\end{definition}
\begin{example}
\label{exampleofeveneight}
In the proof of Proposition~\ref{involutionwithexactlyeightisolatedfixedpointsissymplectic}, the irreducible curves in the branch locus of the double cover $\pi\colon X'\rightarrow\widetilde{Y}$ form an even eight because they are eight disjoint smooth rational curves and their sum is divisible by $2$ in $\NS(\widetilde{Y})$.
\end{example}

%----------------------------------------------------------------------------------------------------

\section{The $4$-dimensional subfamily of K3 surfaces with $\mathbb{Z}_2^2$ symplectic action}
\label{triple-doublek3surfaces}

%-------------------------

\subsection{Triple-double K3 surfaces}
\label{definitiontriple-doublek3surfaces}
Consider six lines $\ell_0,\ldots,\ell_5$ in $\mathbb{P}^2$ without triple intersection points. Divide the six lines into three pairs $(\ell_0,\ell_1),(\ell_2,\ell_3),(\ell_4,\ell_5)$. There exists the double cover $X_1\rightarrow\mathbb{P}^2$ branched along $\ell_0+\ell_1$ because $\ell_0+\ell_1\in2\Pic(\mathbb{P}^2)$. The pullback to $X_1$ of $\ell_2+\ell_3$ is also divisible by $2$ in $\Pic(X_1)$, so we have another double cover $X_2\rightarrow X_1$. Repeat this construction on $X_2$ with respect to the last pair of lines to obtain a triple-double cover $X_3\rightarrow\mathbb{P}^2$ branched along $\sum_{i=0}^5\ell_i$. The preimage of $\ell_0\cap\ell_1$ under $X_3\rightarrow\mathbb{P}^2$ consists of four $A_1$ singularities, and the same applies to $\ell_2\cap\ell_3$ and $\ell_4\cap\ell_5$. $X_3$ is smooth away from these twelve singular points. Let $\sigma\colon X\rightarrow X_3$ be the minimal resolution of $X_3$, which is obtained by blowing up the twelve $A_1$ singularities.
\begin{proposition}
\label{prooftripledoublecoverisak3}
With the notation above, $X$ is a K3 surface.
\end{proposition}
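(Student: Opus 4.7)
To verify that $X$ is a K3 surface I will check: (i) $X$ is smooth, projective, and irreducible; (ii) $K_X \sim 0$; and (iii) $h^1(X,\mathcal{O}_X) = 0$. Smoothness and projectivity are immediate from the construction, as $X_3$ is a finite cover of $\mathbb{P}^2$ and $\sigma$ is a projective resolution. Irreducibility reduces to connectedness of $X_3$, which holds because each double cover $f_i$ in the tower is defined by a nontrivial line bundle (essentially a pullback of $\mathcal{O}_{\mathbb{P}^2}(1)$), so $h^0(\mathcal{O}) = 1$ is preserved at every stage.

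For (ii), the plan is to first establish $K_{X_3} \sim 0$ and then invoke crepancy of the $A_1$ resolution. Applying the canonical bundle formula $K_Y = f^*(K_Z + L)$ for a double cover $f \colon Y \to Z$ branched along $2L$ iteratively along $X_3 \to X_2 \to X_1 \to \mathbb{P}^2$, with $L_1 = H$, $L_2 = f_1^*H$, $L_3 = (f_2 f_1)^*H$, one obtains
\begin{equation*}
K_{X_3} \sim \pi^*(K_{\mathbb{P}^2} + L_1 + L_2 + L_3) \sim \pi^*(-3H + 3H) \sim 0,
\end{equation*}
where $\pi \colon X_3 \to \mathbb{P}^2$ is the composite cover. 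The twelve singular points of $X_3$ are $A_1$ singularities (locally $z^2 = xy$ above each pair intersection, lifted through the remaining \'etale $\mathbb{Z}_2^2$-cover), hence rational double points; the minimal resolution $\sigma$ is therefore crepant and $K_X = \sigma^* K_{X_3} \sim 0$.

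For (iii) I compute $\chi_{\topo}(X) = 24$ by iterating $\chi_{\topo}(\text{cover}) = 2\chi_{\topo}(\text{base}) - \chi_{\topo}(\text{branch})$ along the tower and adding $12$ for the twelve $A_1$ blowups, then conclude via Noether's formula that $\chi(\mathcal{O}_X) = (K_X^2 + \chi_{\topo}(X))/12 = 2$. Since $K_X \sim 0$ gives $p_g(X) = 1$, the relation $\chi(\mathcal{O}_X) = 1 + p_g(X) - q(X)$ forces $q(X) = h^1(X, \mathcal{O}_X) = 0$, completing the proof. The one step requiring care is the Euler characteristic at the third stage: the preimage of $\ell_4 \cup \ell_5$ in $X_2$ is a union of two elliptic curves (each a double cover of $\mathbb{P}^1$ ramified at four points) meeting transversally in four points, contributing $\chi_{\topo} = -4$ rather than a positive value; this is the main bookkeeping hurdle, and once correctly handled yields $\chi_{\topo}(X_3) = 12$ and then $\chi_{\topo}(X) = 24$.
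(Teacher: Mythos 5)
Your proof is correct, but it takes a genuinely different route from the paper. The paper's proof is very short: it realizes $X_3$ as the complete intersection of the three quadrics $W_3^2=L_0L_1$, $W_4^2=L_2L_3$, $W_5^2=L_4L_5$ in $\mathbb{P}^5$, from which $K_{X_3}\sim0$ follows by adjunction ($-6H+2H+2H+2H$) and $h^1(X_3,\mathcal{O}_{X_3})=0$ follows from standard vanishing for complete intersections; it then descends both facts to the crepant resolution $X$ via $\sigma^*K_{X_3}\sim K_X$ and $\sigma_*\mathcal{O}_X\cong\mathcal{O}_{X_3}$. You instead work intrinsically through the tower of double covers: the iterated canonical bundle formula gives $K_{X_3}\sim\pi^*(-3H+3H)\sim0$, and you obtain $q(X)=0$ indirectly from Noether's formula after computing $\chi_{\topo}(X)=24$. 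Your Euler characteristic bookkeeping is right (in particular the delicate third stage: the preimage of $\ell_4\cup\ell_5$ in $X_2$ is two genus $1$ curves meeting in four points, contributing $-4$, so $\chi_{\topo}(X_3)=2\cdot4-(-4)=12$ and the twelve $A_1$ resolutions add $12$), and the deduction $\chi(\mathcal{O}_X)=2$, $p_g=1$, hence $q=0$ is the same device the paper itself uses in the proof of Proposition~\ref{involutionwithexactlyeightisolatedfixedpointsissymplectic}, just not here. What the paper's approach buys is brevity and the avoidance of two points you pass over quickly: the validity of the canonical bundle formula $K_Y=f^*(K_Z+L)$ on the \emph{singular} intermediate surfaces $X_1,X_2,X_3$ (harmless since $A_1$ points are Gorenstein canonical, but it deserves a word), and the identification of all twelve singularities as $A_1$, which you do address via the local model $z^2=xy$. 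What your approach buys is independence from the projective embedding and a transparent topological picture of the tower; both arguments are sound.
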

\begin{proof}
A projective realization of $X_3$ can be constructed as follows. Let $[W_0:W_1:W_2]$ be coordinates in $\mathbb{P}^2$ and let $L_i=L_i(W_0,W_1,W_2)=0$ be the equation of the line $\ell_i$. Then $X_3\subset\mathbb{P}^5$ is given by the vanishing of the following three quadrics:
\begin{displaymath}
\left\{ \begin{array}{ll}
W_3^2=L_0L_1,\\
W_4^2=L_2L_3,\\
W_5^2=L_4L_5.
\end{array} \right.
\end{displaymath}
From this follows immediately that $K_{X_3}\sim0$ and $h^1(X_3,\mathcal{O}_{X_3})=0$. We conclude that $X$ is a K3 surface because $\sigma^*K_{X_3}\sim K_X$ and $\sigma_*\mathcal{O}_X\cong\mathcal{O}_{X_3}$.
\end{proof}
\begin{definition}
We call a K3 surface $X$ as in Proposition~\ref{prooftripledoublecoverisak3} a \emph{triple-double} K3 surface.
\end{definition}
\begin{observation}
\label{otherwaystoseetriple-doublek3surfaces}
Let $X$ be the triple-double K3 surface as defined above. Let $\Bl_3\mathbb{P}^2$ be the blow up of $\mathbb{P}^2$ at $\ell_0\cap\ell_1,\ell_2\cap\ell_3,\ell_4\cap\ell_5$. Then $X$ can be viewed as a $\mathbb{Z}_2^3$-cover of $\Bl_3\mathbb{P}^2$ branched along the strict preimages $\widehat{\ell}_0,\ldots,\widehat{\ell}_5$ of the six lines in $\mathbb{P}^2$ (see Figure~\ref{threepairsoflines}). More precisely, we first take the double cover of $\Bl_3\mathbb{P}^2$ branched along $\widehat{\ell}_0+\widehat{\ell}_1$, and so on. In this way the $\mathbb{Z}_2^3$-cover is automatically smooth. We can also realize $X$ as a hypersurface in $(\mathbb{P}^1)^3$ as follows. The blow up $\Bl_3\mathbb{P}^2$ can be embedded in $(\mathbb{P}^1)^3$ as a hypersurface of equation given by
\begin{equation*}
\sum_{i,j,k=0,1}c_{ijk}X_iY_jZ_k=0,
\end{equation*}
where the coefficients $c_{ijk}$ are general, nonzero, and the lines $\widehat{\ell}_0,\ldots,\widehat{\ell}_5$ are given by the restriction of the toric boundary of $(\mathbb{P}^1)^3$ (see \cite[Proposition 3.14]{schaffler}). Then $X$ is the following hypersurface in $(\mathbb{P}^1)^3$:
\begin{equation}
\label{222hypersurface}
\sum_{i,j,k=0,1}c_{ijk}X_i^2Y_j^2Z_k^2=0,
\end{equation}
where the $\mathbb{Z}_2^3$-covering map is given by the restriction to $X$ of
\begin{equation*}
([X_0:X_1],[Y_0:Y_1],[Z_0:Z_1])\mapsto([X_0^2:X_1^2],[Y_0^2:Y_1^2],[Z_0^2:Z_1^2]).
\end{equation*}
At this point we have several equivalent descriptions of triple-double K3 surfaces. In what follows, we switch from one perspective to another depending on which one is more convenient for our purposes.
\end{observation}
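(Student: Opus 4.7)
The Observation asserts two alternative descriptions of the triple-double K3 surface $X$: first, that $X$ is the total space of a smooth $\mathbb{Z}_2^3$-cover of $\Bl_3\mathbb{P}^2$ branched along the six strict transforms $\widehat{\ell}_i$; and second, using the embedding $\Bl_3\mathbb{P}^2\hookrightarrow(\mathbb{P}^1)^3$ of \cite[Proposition~3.14]{schaffler}, that $X$ is cut out in $(\mathbb{P}^1)^3$ by the $(2,2,2)$-hypersurface \eqref{222hypersurface}. The plan is to prove the first statement by comparing two constructions, and then to deduce the second by pulling the equation of $\Bl_3\mathbb{P}^2$ back along the coordinatewise squaring map.

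For the first claim, denote $p_r = \ell_{2r}\cap\ell_{2r+1}$ for $r=0,1,2$. On $\Bl_3\mathbb{P}^2$ the strict transforms within each pair become disjoint, and the class $\widehat{\ell}_{2r}+\widehat{\ell}_{2r+1} = 2(H - E_{p_r})$ is $2$-divisible in $\Pic(\Bl_3\mathbb{P}^2)$, where $H$ is the hyperplane class pulled back from $\mathbb{P}^2$ and $E_{p_r}$ is the exceptional divisor over $p_r$. Hence the iterated double cover of $\Bl_3\mathbb{P}^2$ branched in turn along the three pairs of disjoint smooth lines yields a smooth $\mathbb{Z}_2^3$-cover $X'' \to \Bl_3\mathbb{P}^2$. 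Composing with the blowdown $\Bl_3\mathbb{P}^2 \to \mathbb{P}^2$ gives a $\mathbb{Z}_2^3$-cover $X'' \to \mathbb{P}^2$ that agrees with $X_3 \to \mathbb{P}^2$ away from the three points $p_r$, so there is an induced birational morphism $X'' \to X_3$. To identify $X''$ with $X$, I would work locally near each $p_r$: after analytically trivializing the other two branch line bundles (which are nonvanishing at $p_r$ since the six lines are general), $X_3$ locally consists of four disjoint $A_1$ singularities of the standard form $w^2 = L_{2r}L_{2r+1}$, and the $\mathbb{Z}_2^3$-cover of the blow up replaces each of these with a single $(-2)$-curve sitting over the exceptional divisor $E_{p_r}$. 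By the universal property of the minimal resolution of rational double points, $X''$ is then canonically identified with the minimal resolution $X$ of $X_3$.

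For the second claim, the coordinatewise squaring map $\phi \colon (\mathbb{P}^1)^3 \to (\mathbb{P}^1)^3$ is the $\mathbb{Z}_2^3$-Galois cover branched precisely along the toric boundary. Under the embedding of \cite[Proposition~3.14]{schaffler}, the toric boundary restricts to $\widehat{\ell}_0+\ldots+\widehat{\ell}_5$ on $\Bl_3\mathbb{P}^2$, so the fibre product of $\Bl_3\mathbb{P}^2 \hookrightarrow (\mathbb{P}^1)^3$ with $\phi$ is exactly the $\mathbb{Z}_2^3$-cover of $\Bl_3\mathbb{P}^2$ branched along the six $\widehat{\ell}_i$, i.e.\ the surface $X'' \cong X$ of the first part. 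Pulling back the defining equation $\sum c_{ijk}X_iY_jZ_k = 0$ under $\phi$ yields exactly \eqref{222hypersurface}, whose smoothness for general $c_{ijk}$ is already guaranteed by the first claim (or by the Jacobian criterion applied directly). The restriction of $\phi$ to this hypersurface is then the asserted $\mathbb{Z}_2^3$-covering map.

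The only genuinely nontrivial step is the local analysis near the three nodes $p_r$: matching the four $A_1$ singularities of $X_3$ above each $p_r$ with the four $(-2)$-curves of the $\mathbb{Z}_2^3$-cover lying over the exceptional divisor $E_{p_r}$. Once this local compatibility is verified, both descriptions follow by standard descent and universal-property arguments.
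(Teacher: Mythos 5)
The paper states this as an Observation without providing a proof, so there is no written argument to compare against; your write-up correctly supplies the standard details that the author leaves implicit. The key points of your argument --- the $2$-divisibility $\widehat{\ell}_{2r}+\widehat{\ell}_{2r+1}=2(H-E_{p_r})$ in $\Pic(\Bl_3\mathbb{P}^2)$, the smoothness of the iterated double cover because each branch pair becomes disjoint after blowing up its intersection point, the local matching of the four $A_1$ points of $X_3$ over each $p_r$ with the four $(-2)$-curves lying over $E_{p_r}$ (so that the universal property of the minimal resolution identifies the smooth cover with $X$), and the identification of the fibre product along the coordinatewise squaring map with the iterated cover, giving equation (\ref{222hypersurface}) --- are all correct and consistent with what the paper takes for granted and with the cited toric embedding of $\Bl_3\mathbb{P}^2$ in $(\mathbb{P}^1)^3$.
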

\begin{figure}
\centering
\includegraphics[scale=0.70,valign=t]{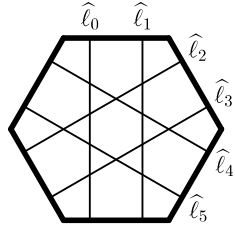}
\caption{Toric picture of $\Bl_3\mathbb{P}^2$ together with the divisor $\widehat{\ell}_0+\ldots+\widehat{\ell}_5$}
\label{threepairsoflines}
\end{figure}
\begin{definition}
\label{familyoftriple-doublek3surfaces}
Consider the realization of a triple-double K3 surface $X$ as a hypersurface in $(\mathbb{P}^1)^3$ given by equation (\ref{222hypersurface}) in Observation~\ref{otherwaystoseetriple-doublek3surfaces}. Up to rescaling the coefficients and up to the torus action on the variables, we can assume that $c_{000},c_{100},c_{010},c_{001}=1$. Let $\mathcal{U}\subset\mathbb{G}_m^4$ be the dense open subset consisting of quadruples $(c_{110},c_{101},c_{011},c_{111})$ which give a triple-double K3 surface. Thinking of $c_{110},c_{101},c_{011},c_{111}$ as variables, let $\mathfrak{X}$ be the subvariety of $\mathcal{U}\times(\mathbb{P}^1)^3$ given by
\begin{gather*}
X_0^2Y_0^2Z_0^2+X_1^2Y_0^2Z_0^2+X_0^2Y_1^2Z_0^2+X_0^2Y_0^2Z_1^2\\
+c_{110}X_1^2Y_1^2Z_0^2+c_{101}X_1^2Y_0^2Z_1^2+c_{011}X_0^2Y_1^2Z_1^2+c_{111}X_1^2Y_1^2Z_1^2=0.
\end{gather*}
We refer to $\mathfrak{X}\rightarrow\mathcal{U}$ as the \emph{family} of triple-double K3 surfaces.
\end{definition}

%-------------------------

\subsection{Involutions of a triple-double K3 surface}
\begin{proposition}
\label{involutiontriple-doublek3surface}
Let $X$ be a triple-double K3 surface, and view it as a hypersurface in $(\mathbb{P}^1)^3$ of equation
\begin{equation*}
\sum_{i,j,k=0,1}c_{ijk}X_i^2Y_j^2Z_k^2=0,
\end{equation*}
as explained in Observation~\ref{otherwaystoseetriple-doublek3surfaces}. Denote by $\iota_{ijk}$ the restriction to $X$ of
{\footnotesize
\begin{align*}
(\mathbb{P}^1)^3&\rightarrow(\mathbb{P}^1)^3,\\
([X_0:X_1],[Y_0:Y_1],[Z_0:Z_1])&\mapsto([X_0:(-1)^iX_1],[Y_0:(-1)^jY_1],[Z_0:(-1)^kZ_1]).
\end{align*}
}%
Then the following hold:
\begin{itemize}
\item[(i)] $\iota_{111}$ is an Enriques involution (i.e., $X/\iota_{111}$ is an Enriques surface);
\item[(ii)] If $\iota\in\{\iota_{100},\iota_{010},\iota_{001}\}$, then $\iota$ is a non-symplectic involution and $X/\iota$ is a smooth rational surface. The fixed points locus of $\iota$ consists of two disjoint smooth genus $1$ curves (and hence $\iota$ is of parabolic type according to \cite[Section 2.8]{alexeevnikulin});
\item[(iii)] $\iota_{110},\iota_{101},\iota_{011}$ are Nikulin involutions with pairwise disjoint sets of fixed points.
\end{itemize}
In particular, the group of automorphisms $\{\id_X,\iota_{110},\iota_{101},\iota_{011}\}\cong\mathbb{Z}_2^2$ acts symplectically on $X$.
\end{proposition}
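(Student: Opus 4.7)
I would set up one uniform framework: because every monomial of the defining equation of $X$ is a square in each of the three variable pairs, each $\iota_{ijk}$ restricts to an involution of $X$. The fixed locus of $\iota_{ijk}$ in $(\mathbb{P}^1)^3$ is a product whose $\alpha$-th factor is $\{[1\!:\!0],[0\!:\!1]\}$ when the $\alpha$-th entry of $(i,j,k)$ is $1$ and is all of $\mathbb{P}^1$ otherwise. All of (i)--(iii) will then drop out by intersecting these products with $X$ and inspecting the result.

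For (i), the ambient fixed locus of $\iota_{111}$ is the set of eight torus-fixed points of $(\mathbb{P}^1)^3$; evaluating the equation of $X$ at the point with the nonzero coordinates indexed by $(i,j,k)$ produces exactly the monomial $c_{ijk}\neq 0$, so $\iota_{111}$ acts freely on $X$, and the quotient of a K3 surface by a free involution is Enriques. For (ii), the ambient fixed locus of $\iota_{100}$ is $\{X_0=0\}\sqcup\{X_1=0\}$, two disjoint copies of $\mathbb{P}^1\times\mathbb{P}^1$; restricting the defining equation to each component produces a curve of bidegree $(2,2)$, of arithmetic genus $1$ by adjunction on $\mathbb{P}^1\times\mathbb{P}^1$, and smooth for the generic triple-double K3 surface. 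Since the fixed locus is a smooth divisor, $\iota_{100}$ cannot be symplectic (a symplectic involution has only isolated fixed points). To see that $X/\iota_{100}$ is a smooth rational surface, I would use the quotient map $(\mathbb{P}^1)^3\to(\mathbb{P}^1)^3$ that squares the first coordinate; $X/\iota_{100}$ embeds as the smooth hypersurface $\sum_{i,j,k}c_{ijk}X_iY_j^2Z_k^2=0$ of tridegree $(1,2,2)$, and since this equation is linear in $[X_0\!:\!X_1]$, the projection to the last two factors is birational, so $X/\iota_{100}$ is rational. The cases $\iota_{010},\iota_{001}$ follow by relabeling.

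For (iii), the ambient fixed locus of $\iota_{110}$ is four disjoint copies of $\mathbb{P}^1$, one for each pair of torus-fixed points in the first two factors; on each such line the equation of $X$ reduces to a binary form $aZ_0^2+bZ_1^2$ with $a,b\in\{c_{ijk}\}$ both nonzero, cutting out two points. This produces exactly eight isolated fixed points on $X$, so by Proposition~\ref{involutionwithexactlyeightisolatedfixedpointsissymplectic} the involution $\iota_{110}$ is Nikulin; identical arguments handle $\iota_{101}$ and $\iota_{011}$. For pairwise disjointness, any point fixed by two distinct involutions from $\{\iota_{110},\iota_{101},\iota_{011}\}$ is also fixed by their product $\iota_{111}$, but (i) shows no such points lie on $X$. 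Finally, the composition law $\iota_{110}\iota_{101}=\iota_{011}$ turns $\{\id_X,\iota_{110},\iota_{101},\iota_{011}\}$ into a copy of $\mathbb{Z}_2^2$; each nonidentity element is symplectic, so the whole group acts symplectically.

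The steps I expect to require the most care are the rationality and smoothness claims in (ii). Rationality follows cleanly from the linearity-in-one-coordinate trick above. Smoothness of the two $(2,2)$ fixed curves is an open nonempty condition on the $c_{ijk}$; I would verify that it is implied by the generality hypothesis (no triple intersections among the six lines) built into the definition of a triple-double K3 surface, either directly via the Jacobian criterion on $\mathbb{P}^1\times\mathbb{P}^1$ or by tracing the singular points of the fixed $(2,2)$ curve back through the cover to forbidden coincidences of the line arrangement in $\mathbb{P}^2$.
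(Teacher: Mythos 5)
Your proposal is correct and follows essentially the same route as the paper: identify the ambient fixed loci, observe that $\iota_{111}$ is free (so the quotient is Enriques), that $\iota_{110},\iota_{101},\iota_{011}$ each have exactly eight isolated fixed points (so Proposition~\ref{involutionwithexactlyeightisolatedfixedpointsissymplectic} makes them Nikulin involutions), and that $\iota_{100}$ fixes the two disjoint genus~$1$ curves cut out on $\{X_0=0\}$ and $\{X_1=0\}$. The only substantive differences are cosmetic improvements on your part --- you prove rationality of $X/\iota_{100}$ directly via the tridegree $(1,2,2)$ quotient hypersurface where the paper cites \cite[Chapter 2]{alexeevnikulin}, and you derive pairwise disjointness of the fixed loci in (iii) from the freeness of $\iota_{111}$ --- and for the smoothness of the fixed curves you could simply note that the fixed locus of an involution on a smooth surface is automatically smooth.
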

\begin{proof}
$\iota_{111}$ is an Enriques involution because it has no fixed points (recall that the coefficients $c_{ijk}$ are nonzero). $\iota_{110},\iota_{101},\iota_{011}$ are symplectic involutions because each one has exactly eight fixed points (see Proposition~\ref{involutionwithexactlyeightisolatedfixedpointsissymplectic}). It is simple to observe that $\iota_{110},\iota_{101},\iota_{011}$ have pairwise disjoint sets of fixed points.

Let us prove (ii) for $\iota_{100}$ (similar arguments hold for $\iota_{010},\iota_{001}$). The fixed points locus of $\iota_{100}$ is given by
\begin{equation*}
\left\{X_0=0,\sum_{j,k=0,1}c_{1jk}Y_j^2Z_k^2=0\right\}\amalg\left\{X_1=0,\sum_{j,k=0,1}c_{0jk}Y_j^2Z_k^2=0\right\},
\end{equation*}
which are two disjoint smooth genus $1$ curves. This is enough to show that $X/\iota_{100}$ is a smooth rational surface (see \cite[Chapter 2]{alexeevnikulin}).
\end{proof}
\begin{observation}
\label{triple-doublek3sforma4dimfamily}
Let $\mathfrak{X}\rightarrow\mathcal{U}$ be the family of triple-double K3 surfaces in Definition~\ref{familyoftriple-doublek3surfaces}, which has a $4$-dimensional parameter space $\mathcal{U}$. Denote by $X_u$ the fiber over a point $u\in\mathcal{U}$. Let us show that $\mathfrak{X}\rightarrow\mathcal{U}$ is a \emph{$4$-dimensional family} of K3 surfaces, and by this we mean that there is no positive dimensional subvariety $C\subset\mathcal{U}$ such that $X_u\cong X_v$ for all $u,v\in C$. Assume by contradiction that there exists such $C\subset\mathcal{U}$. Let $G$ be the group generated by the involutions $\{\iota_{110},\iota_{101},\iota_{011}\}$ in Proposition~\ref{involutiontriple-doublek3surface}. Then $G$ acts on the whole space $\mathfrak{X}$ inducing a symplectic action on each fiber of $\mathfrak{X}\rightarrow\mathcal{U}$. If $\mathcal{Z}$ denotes the minimal resolution of the quotient of $\mathfrak{X}$ by $G$, then $\mathcal{Z}\rightarrow\mathcal{U}$ is the family of K3 surfaces given by the minimal resolutions of the double covers of $\mathbb{P}^2$ branched along six lines without triple intersection points (this aspect is discussed more in Section~\ref{three4dimensionalfamiliesofk3surfacesrelated}). We have that $\mathcal{Z}\rightarrow\mathcal{U}$ is a $4$-dimensional family of K3 surfaces (see \cite[Remark 5.9]{kloosterman}). But then the fibers of $\mathcal{Z}\rightarrow\mathcal{U}$ over the positive dimensional subvariety $C\subset\mathcal{U}$ would be isomorphic to each other, which cannot be.
\end{observation}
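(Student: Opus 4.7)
My plan is to argue by contradiction by exploiting the symplectic $\mathbb{Z}_2^2$-action on the total space $\mathfrak{X}$ and comparing with a related $4$-dimensional family of K3 surfaces whose non-triviality is already documented in the literature. Suppose, toward a contradiction, that there is a positive-dimensional subvariety $C\subset\mathcal{U}$ with $X_u\cong X_v$ for every $u,v\in C$.

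First, I would observe that the involutions $\iota_{110},\iota_{101},\iota_{011}$ from Proposition~\ref{involutiontriple-doublek3surface} are restrictions of involutions of the ambient $(\mathbb{P}^1)^3$ given by sign-change formulas that do not depend on the coefficients $c_{ijk}$. Hence the group $G=\{\id,\iota_{110},\iota_{101},\iota_{011}\}\cong\mathbb{Z}_2^2$ acts on the whole family $\mathfrak{X}$ over $\mathcal{U}$, restricting on each fiber to the prescribed symplectic action. Taking the fiberwise quotient and its minimal resolution produces a new flat family $\mathcal{Z}\to\mathcal{U}$, and, using the identification developed later in Section~\ref{three4dimensionalfamiliesofk3surfacesrelated}, each $Z_u$ is the minimal resolution of the double cover of $\mathbb{P}^2$ branched along the six lines $\ell_0,\ldots,\ell_5$ encoded by $u$. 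By \cite[Remark 5.9]{kloosterman}, this latter family is genuinely $4$-dimensional in the sense defined here, so no positive-dimensional subvariety of $\mathcal{U}$ can produce all-isomorphic fibers $Z_u$.

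To complete the contradiction, I must show that $X_u\cong X_v$ for all $u,v\in C$ forces $Z_u\cong Z_v$ for the same pairs, which then contradicts the $4$-dimensionality of $\mathcal{Z}\to\mathcal{U}$. This descent is the step I expect to require the most care: an arbitrary abstract isomorphism $X_u\cong X_v$ need not intertwine the two prescribed $G$-actions, so one cannot directly pass it through the quotient. I would handle this by invoking the uniqueness (up to isometry on $H^2(X;\mathbb{Z})$) of the $\mathbb{Z}_2^2$ symplectic action on a K3 surface recalled before Theorem~\ref{garbagnatisartiresults}, combined with the Torelli theorem, to modify any given isomorphism by an automorphism of $X_v$ into a $G$-equivariant one; this equivariant isomorphism then descends to an isomorphism $Z_u\cong Z_v$, closing the argument. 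The delicate point throughout is that one is working with families rather than single surfaces, so the $G$-equivariant modification must be carried out in a way compatible with the variation over $C$, which is where I would be most wary of subtle gaps.
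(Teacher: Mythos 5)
Your strategy is the same as the paper's: let $G=\{\id,\iota_{110},\iota_{101},\iota_{011}\}$ act on the total space $\mathfrak{X}$, pass to the quotient family $\mathcal{Z}\rightarrow\mathcal{U}$ of (resolutions of) double covers of $\mathbb{P}^2$ branched along six lines, and contradict the fact, quoted from Kloosterman, that this latter family is $4$-dimensional. You have also correctly isolated the one step that requires an argument, namely that $X_u\cong X_v$ for all $u,v\in C$ forces $Z_u\cong Z_v$. Be aware that the paper itself simply asserts this descent (``but then the fibers of $\mathcal{Z}\rightarrow\mathcal{U}$ over $C$ would be isomorphic to each other''), so you are not missing anything the paper supplies; but the repair you sketch does not work as stated. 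Nikulin's uniqueness of the $\mathbb{Z}_2^2$ symplectic action on $H^2(X;\mathbb{Z})$ is a purely lattice-theoretic statement: it yields an isometry of the second cohomology groups intertwining the two actions, but that isometry has no reason to be a Hodge isometry carrying an ample class to an ample class, so the Torelli theorem cannot be applied to it to produce an automorphism of $X_v$ conjugating one geometric action into the other. In general there is no a priori reason for two symplectic actions of the same group on a fixed K3 surface to be conjugate in $\Aut$, so some genuine use of the family structure is unavoidable here.

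A clean way to carry out the descent is at the level of periods rather than isomorphisms. Since the fibers of $\mathfrak{X}|_C$ are pairwise isomorphic and $C$ is connected of positive dimension, a locally defined marked period map of $\mathfrak{X}|_C$ has image contained in a single orbit of the countable group of isometries of the K3 lattice, hence has countable image, hence is constant. Because $G$ acts on all of $\mathfrak{X}$ over $\mathcal{U}$ (as you observe, the involutions are given by coefficient-independent sign changes), the $G$-invariant part of the local system of second cohomology of the fibers is a local subsystem, and the period line of $Z_u$ is the preimage under $\pi^*$ of the period line of $X_u$ inside this subsystem; therefore the period map of $\mathcal{Z}|_C$ is constant as well, and the Torelli theorem for the lattice-polarized family $\mathcal{Z}$ then gives $Z_u\cong Z_v$ for $u,v$ in a neighborhood in $C$, which is the contradiction you want. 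With this replacement for the equivariance step, your argument is complete and coincides with the paper's.
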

\begin{observation}
\label{infiniteautomorphismgroupandinfinitelymanysmoothrationalcurves}
A triple-double K3 surface $X$ has infinite automorphism group because it covers an Enriques surface by Proposition~\ref{involutiontriple-doublek3surface}(i) (see \cite[Page 194]{kondo86}). Moreover, $X$ contains smooth rational curves (see Proposition~\ref{configurationof24smoothrationalcurves} for a more detailed discussion). Therefore, $X$ contains infinitely many smooth rational curves by \cite[Corollary 4.7]{huybrechts}.
\end{observation}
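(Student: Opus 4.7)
[Proof plan for Observation~\ref{infiniteautomorphismgroupandinfinitelymanysmoothrationalcurves}]
The statement has two claims, and my plan is to handle them in the order in which they are listed, leveraging the already-established existence of the Enriques involution $\iota_{111}$.

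For the first claim (infinite automorphism group), I would argue as follows. By Proposition~\ref{involutiontriple-doublek3surface}(i), the quotient $S=X/\iota_{111}$ is an Enriques surface, and $\pi\colon X\to S$ is the universal cover (an unramified double cover). It is a classical fact, proved by Barth--Peters and recovered in the reference \cite{kondo86} on page 194, that a very general Enriques surface has infinite automorphism group; in fact, any Enriques surface whose covering K3 has Picard rank at least a modest bound (certainly satisfied here since $\rho(X)\geq 16$) has infinite $\Aut$. Since $\pi$ is the universal cover, every automorphism of $S$ lifts to an automorphism of $X$ commuting with $\iota_{111}$; the induced map $\Aut(S)\to\Aut(X)/\langle\iota_{111}\rangle$ is injective, so $\Aut(X)$ is infinite.

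For the second claim (infinitely many smooth rational curves), my plan is to invoke Proposition~\ref{configurationof24smoothrationalcurves}, which provides an explicit configuration of $24$ smooth rational curves on $X$ coming from the preimage of the $(-1)$-curves on $\Bl_3\mathbb{P}^2$. In particular, $X$ contains at least one $(-2)$-curve. Combined with the first claim, we may apply \cite[Corollary 4.7]{huybrechts}: on a K3 surface with infinite automorphism group, the existence of a single smooth rational curve already forces the existence of infinitely many, because the $\Aut(X)$-orbit of any such curve inside $\NS(X)$ must be infinite (the stabilizer of a $(-2)$-class in the finite-index subgroup acting effectively on $\NS(X)$ is finite).

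I do not foresee a serious obstacle here; the content of the observation is essentially a two-line combination of Proposition~\ref{involutiontriple-doublek3surface}(i), Proposition~\ref{configurationof24smoothrationalcurves}, and the two cited results. The only point that requires a moment of care is ensuring that the lift from $\Aut(S)$ to $\Aut(X)$ is injective modulo $\iota_{111}$, which is immediate from $\pi$ being the universal cover.
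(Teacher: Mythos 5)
Your overall route is the same as the paper's: the Enriques involution $\iota_{111}$ of Proposition~\ref{involutiontriple-doublek3surface}(i) is used to get infiniteness of $\Aut(X)$, Proposition~\ref{configurationof24smoothrationalcurves} supplies a smooth rational curve, and \cite[Corollary 4.7]{huybrechts} then gives infinitely many. The second half of your argument is fine. The problem is in your justification of the first half. The paper cites \cite[Page 194]{kondo86} directly for the statement that the covering K3 surface of \emph{any} Enriques surface has infinite automorphism group, which applies to every triple-double K3 surface at once. You instead try to deduce infiniteness of $\Aut(X)$ from infiniteness of $\Aut(S)$ for $S=X/\iota_{111}$ by lifting along the universal cover (that lifting step, and the injectivity of $\Aut(S)\to\Aut(X)/\langle\iota_{111}\rangle$, are correct), but your justification that $\Aut(S)$ is infinite does not hold up. The claim that ``any Enriques surface whose covering K3 has Picard rank at least a modest bound has infinite $\Aut$'' is false, and in fact points in the wrong direction: by Kondo's classification in the very paper cited, the Enriques surfaces with finite automorphism group are seven special types whose K3 covers have Picard number $19$ or $20$, so a large Picard number of the cover is perfectly compatible with $\Aut(S)$ being finite. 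Likewise, ``very general Enriques surface'' results in the style of Barth--Peters say nothing about members of a $4$-dimensional subfamily of the $10$-dimensional moduli space. The clean fix is to use the fact the paper actually cites, namely that $\Aut$ of the K3 cover of any Enriques surface is infinite. (Alternatively, for $\rho(X)=16$ you could note that $S$ cannot be one of Kondo's seven types, since their covers have Picard number at least $19$, and then lift; but that argument does not cover special members of the family with larger Picard number, whereas the Observation is stated for all triple-double K3 surfaces.)
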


%-------------------------

\subsection{A configuration of $24$ smooth rational curves on a triple-double K3 surface}
\begin{proposition}
\label{configurationof24smoothrationalcurves}
Let $X$ be a triple-double K3 surface and let $X\rightarrow\Bl_3\mathbb{P}^2$ be the corresponding $\mathbb{Z}_2^3$-cover as described in Observation~\ref{otherwaystoseetriple-doublek3surfaces}. Then the preimage of the six $(-1)$-curves on $\Bl_3\mathbb{P}^2$ under the covering map gives a configuration of $24$ smooth rational curves on $X$ whose dual graph is shown in Figure~\ref{dualgraph24smoothrationalcurves}.
\end{proposition}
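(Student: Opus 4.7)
The plan is to analyze the $\mathbb{Z}_2^3$-cover $\pi\colon X\to\Bl_3\mathbb{P}^2$ locally over each of the six $(-1)$-curves, and then to assemble the $24$ resulting components into a dual graph by tracking the Galois action.

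First I would enumerate the six $(-1)$-curves on $\Bl_3\mathbb{P}^2$: the three exceptional divisors $E_1,E_2,E_3$ over the points $p_1=\ell_0\cap\ell_1$, $p_2=\ell_2\cap\ell_3$, $p_3=\ell_4\cap\ell_5$, together with the three strict transforms $\widehat{L}_{12},\widehat{L}_{13},\widehat{L}_{23}$ of the lines in $\mathbb{P}^2$ joining pairs of the $p_i$. For general $\ell_0,\ldots,\ell_5$, these six curves meet only according to the hexagonal pattern $E_i\cdot\widehat{L}_{jk}=1$ when $i\in\{j,k\}$ and $0$ otherwise. A direct check shows that $E_i$ meets the branch divisor $\widehat{\ell}_0+\ldots+\widehat{\ell}_5$ transversally in the two points lying on the $i$-th pair of lines, while $\widehat{L}_{jk}$ meets it transversally in the two points lying on the remaining pair.

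Next, for each $(-1)$-curve $C$ I would compute $\pi^{-1}(C)$ by factoring $\pi$ as a tower of three $\mathbb{Z}_2$-covers, one per pair of lines as in Section~\ref{definitiontriple-doublek3surfaces}. Exactly one of the three subcovers ramifies over $C$ (the one associated with the pair meeting $C$), while the other two are \'etale on the preceding stages. Since a double cover of $\mathbb{P}^1$ branched at two points is again $\mathbb{P}^1$ and every \'etale cover of $\mathbb{P}^1$ is trivial, the tower produces $\pi^{-1}(C)$ as a disjoint union of four smooth copies of $\mathbb{P}^1$, yielding $6\cdot 4=24$ smooth rational curves in total; smoothness of each component also follows from the smoothness of $\pi$ established in Observation~\ref{otherwaystoseetriple-doublek3surfaces}.

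To compute the dual graph, components of $\pi^{-1}(C)$ and $\pi^{-1}(C')$ are automatically disjoint whenever $C\cap C'=\varnothing$ in $\Bl_3\mathbb{P}^2$, so the only interesting case is the intersection of components of $\pi^{-1}(E_i)$ with components of $\pi^{-1}(\widehat{L}_{jk})$ when $i\in\{j,k\}$. Here $q=E_i\cap\widehat{L}_{jk}$ is a single point off the branch divisor, so $\pi^{-1}(q)$ is a free $G$-orbit of eight points, where $G=\mathbb{Z}_2^3$. An inertia analysis at a ramification point shows that the stabilizer of a component of $\pi^{-1}(E_i)$ is the $\mathbb{Z}_2$ in $G$ associated with the $i$-th pair of lines, and the stabilizer of a component of $\pi^{-1}(\widehat{L}_{jk})$ is the $\mathbb{Z}_2$ associated with the remaining pair. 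Identifying $\pi^{-1}(q)$ with $G$ and reducing modulo these two distinct order-$2$ subgroups, an elementary coset calculation shows that each component of $\pi^{-1}(E_i)$ meets exactly two components of $\pi^{-1}(\widehat{L}_{jk})$ in one point each, so the local bipartite incidence is $K_{2,2}\amalg K_{2,2}$. Gluing these six local patterns along the base hexagon produces the graph depicted in Figure~\ref{dualgraph24smoothrationalcurves}. The principal obstacle is this last bookkeeping step, namely choosing consistent labelings of the four components across each of the six preimages so that the resulting $4$-regular graph on $24$ vertices matches the specific figure.
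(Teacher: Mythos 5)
Your proposal is correct and follows essentially the same route as the paper: factor the $\mathbb{Z}_2^3$-cover into the tower of three double covers, observe that each $(-1)$-curve meets the branch divisor $\widehat{\ell}_0+\ldots+\widehat{\ell}_5$ transversally in exactly two points belonging to a single pair of lines, and conclude that its preimage consists of four disjoint smooth rational curves, giving $24$ in total. The paper carries out the incidence bookkeeping pictorially (Figures~\ref{firsttwodoublecoverscurve} and~\ref{dualgraph24smoothrationalcurves}), whereas you formalize the same computation via the inertia/coset analysis over each node of the hexagon of $(-1)$-curves; both yield the local $K_{2,2}\amalg K_{2,2}$ incidence pattern and hence the graph of Figure~\ref{dualgraph24smoothrationalcurves}.
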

\begin{proof}
Split the $\mathbb{Z}_2^3$-cover $X\rightarrow\Bl_3\mathbb{P}^2$ into three double covers, each one branched along a pair of curves. Then the configuration of $24$ smooth rational curves on $X$ can be computed by taking appropriate branched double covers starting from the six $(-1)$-curves on $\Bl_3\mathbb{P}^2$ as it is shown in Figure~\ref{firsttwodoublecoverscurve}. The result of the last double cover is shown in Figure~\ref{dualgraph24smoothrationalcurves}.
\end{proof}
\begin{figure}
\centering
\includegraphics[scale=0.55,valign=t]{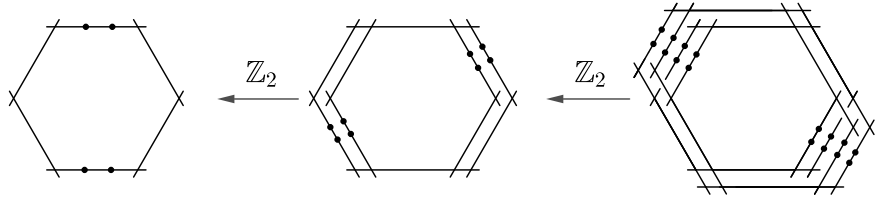}
\caption{First two double covers of the six $(-1)$-curves of $\Bl_3\mathbb{P}^2$. The marked points represent the branch locus}
\label{firsttwodoublecoverscurve}
\end{figure}
\begin{figure}
\centering
\includegraphics[scale=0.50,valign=t]{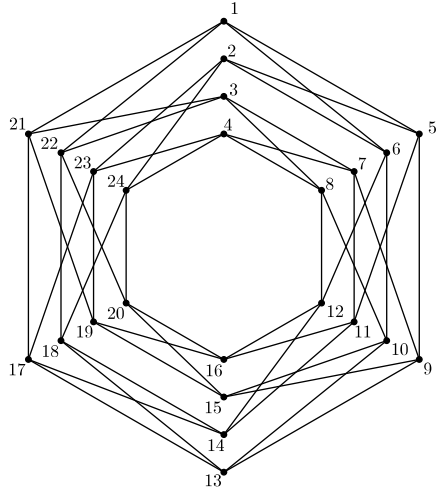}
\caption{Dual graph of the smooth rational curves on a triple-double K3 surface arising as the $\mathbb{Z}_2^3$-cover of the six $(-1)$-curves of $\Bl_3\mathbb{P}^2$}
\label{dualgraph24smoothrationalcurves}
\end{figure}
\begin{definition}
\label{notationfor24curvesonX}
Let $X$ be a triple-double K3 surface. Denote by $R_1,\ldots,R_{24}$ the $24$ smooth rational curves on $X$ described in Proposition~\ref{configurationof24smoothrationalcurves}, and label them as shown in Figure~\ref{dualgraph24smoothrationalcurves}.
\end{definition}

%----------------------------------------------------------------------------------------------------

\section{The N\'eron-Severi lattice of a triple-double K3 surface with minimal Picard number}
\label{theneronseverilatticeofatriple-doublek3surface}
Let $X$ be a triple-double K3 surface with minimal Picard number. In this section, we show that $\NS(X)$ is generated by the $24$ smooth rational curves $R_1,\ldots,R_{24}$. Moreover, we give an explicit $\mathbb{Z}$-basis for $\NS(X)$ which decomposes $\NS(X)$ as $U\oplus E_8\oplus Q$, where $Q$ is described explicitly in Lemma~\ref{thesublatticesSandQ}(iii).

%-------------------------

\subsection{The sublattices $S,Q\subset\NS(X)$}
\begin{lemma}
\label{thesublatticesSandQ}
Let $X$ be a triple-double K3 surface. Let $S$ be the sublattice of $\NS(X)$ generated by
\begin{equation*}
\mathcal{S}=\{R_1,~R_5,~R_9,~R_{13},~R_{17},~R_{23},~R_4,~R_{15},~R_8,~R_3\}.
\end{equation*}
Let $Q$ be the sublattice of $\NS(X)$ generated by
\begin{gather*}
\mathcal{Q}=\{R_{16},~R_{14}-R_{21}+R_{22},~R_{11}-R_2+R_{19}-R_{20},\\
R_{17}+2R_{14}-R_{18}-R_{19}+R_{20},~R_{12}-R_{10}+R_{18}+R_{20},\\
R_3+2R_{22}-2R_6-R_{12}\}.
\end{gather*}
Then the following hold:
\begin{itemize}
\item[(i)] $\mathcal{S}$ is a $\mathbb{Z}$-basis for $S$ and $S\cong U\oplus E_8$;
\item[(ii)] $Q\subset S^\perp$ in $\NS(X)$;
\item[(iii)] $\mathcal{Q}$ is a $\mathbb{Z}$-basis for $Q$ and its corresponding Gram matrix is given by
\begin{displaymath}
\left( \begin{array}{cccccc}
-2&0&1&0&2&-1\\
0&-6&-1&-4&4&-5\\
1&-1&-8&6&2&0\\
0&-4&6&-16&4&-2\\
2&4&2&4&-8&6\\
-1&-5&0&-2&6&-12
\end{array} \right).
\end{displaymath}
\end{itemize}
\end{lemma}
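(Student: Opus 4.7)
The plan is to perform direct intersection calculations from the dual graph in Figure~\ref{dualgraph24smoothrationalcurves}, exploiting that each $R_i$ is a smooth rational curve on a K3 surface (so $R_i^2=-2$) and that $R_i\cdot R_j$ for $i\neq j$ equals the number of edges joining $R_i$ and $R_j$ in that graph.

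For part (i), I would first compute the $10\times 10$ Gram matrix $G_{\mathcal{S}}$ of $\mathcal{S}$ by reading off all $\binom{10}{2}=45$ pairwise intersection numbers from Figure~\ref{dualgraph24smoothrationalcurves}. The labelling chosen for $\mathcal{S}$ strongly suggests that a nine-element subset of $\mathcal{S}$ forms an extended $\widetilde{E}_8$ configuration (a reducible fiber of an elliptic fibration on $X$) while the remaining curve plays the role of a section meeting exactly one of the nine transversely; from this one reads off a manifest splitting $S\cong U\oplus E_8$, since a fiber plus a section of an elliptic fibration generate a copy of $U$, and the complementary eight curves form an $E_8$ Dynkin diagram in the orthogonal complement. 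To make the conclusion independent of this geometric picture, I would also check algebraically that $\det(G_{\mathcal{S}})=\pm 1$, so that $S$ is a non-degenerate even unimodular lattice of signature $(1,9)$; the classification of indefinite even unimodular lattices in \cite[Chapter V]{serre} then forces $S\cong U\oplus E_8$.

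For part (ii), I would verify $v\cdot R=0$ for every generator $v\in\mathcal{Q}$ and every $R\in\mathcal{S}$. Each such pairing expands by bilinearity into a handful of single intersection numbers read off from Figure~\ref{dualgraph24smoothrationalcurves}. The non-obvious coefficients in the elements of $\mathcal{Q}$, such as the $2$ in $R_{17}+2R_{14}-R_{18}-R_{19}+R_{20}$ or the $-2$ in $R_{3}+2R_{22}-2R_{6}-R_{12}$, are chosen precisely so that all $6\cdot 10=60$ resulting pairings vanish. For part (iii), I would compute the $6\times 6$ Gram matrix of $\mathcal{Q}$ entry by entry in the same way and match it to the displayed matrix; linear independence of $\mathcal{Q}$, and hence the fact that $\mathcal{Q}$ is a $\mathbb{Z}$-basis for $Q$, is immediate from the nonvanishing of the determinant of the displayed matrix.

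The main obstacle is the sheer bookkeeping, especially in (iii): the more elaborate generators of $\mathcal{Q}$ involve up to six summands, so individual entries of this Gram matrix expand into as many as thirty terms, and mis-reading a single edge in Figure~\ref{dualgraph24smoothrationalcurves} propagates through many pairings. The cleanest strategy is to extract the full $24\times 24$ intersection matrix of $R_1,\ldots,R_{24}$ from the dual graph once at the outset, and then reduce every verification in (i)--(iii) to a single matrix multiplication, which both keeps the exposition short and avoids repeated case-by-case inspection of the graph.
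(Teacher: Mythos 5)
Your proposal is correct and matches the paper's own argument, which simply states that all assertions can be checked explicitly and remarks that $S\cong U\oplus E_8$ because $S$ is an even unimodular lattice of signature $(1,9)$, pointing to the same geometric picture you describe (a type $II^*$ fiber of an elliptic pencil together with the section $R_3$, shown in Figure~\ref{subgraphextendede8plussection}). Your suggestion to assemble the full $24\times24$ intersection matrix once and reduce everything to matrix multiplication is exactly the sensible way to carry out the "explicit check."
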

\begin{proof}
All the statements above can be checked explicitly. We only remark that $S$ is isometric to $U\oplus E_8$ because $S$ is an even unimodular lattice of signature $(1,9)$ (see Figure~\ref{subgraphextendede8plussection}).
\end{proof}
\begin{figure}
\centering
\includegraphics[scale=0.50,valign=t]{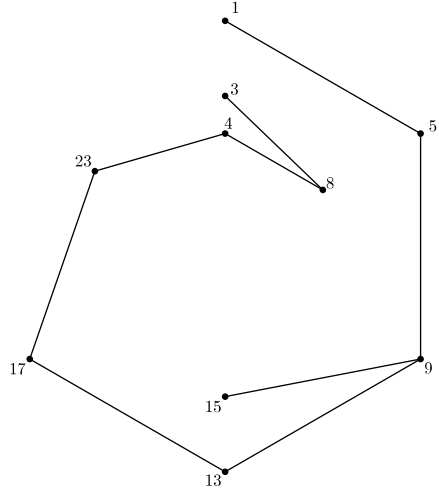}
\caption{Subgraph of the dual graph of the curves $R_1,\ldots,R_{24}$ with vertices the $\mathbb{Z}$-basis of the lattice $S$ in Lemma~\ref{thesublatticesSandQ}. These constitute a type $II^*$ singular fiber of an elliptic pencil (see \cite[Figure 1]{kodaira}) together with the section $R_3$}
\label{subgraphextendede8plussection}
\end{figure}
%\begin{definition}
%Consider the family of triple-double K3 surfaces $\mathfrak{X}\rightarrow\mathcal{U}$ in Definition~\ref{familyoftriple-doublek3surfaces}. Denote by $X_u$ the fiber over a point $u\in\mathcal{U}$. We say that a certain property holds for a \emph{very general} triple-double K3 surface if $X_u$ satisfies the property for all $u\in\mathcal{U}\setminus\mathcal{Z}$, where $\mathcal{Z}$ is the union of countably many Zariski closed subsets of $\mathcal{U}$.
%\end{definition}
\begin{corollary}
\label{importantpropositionforequalitiesinNS}
Let $X$ be a triple-double K3 surface with minimal Picard number. Then $\rho(X)=16$ and $\NS(X)$ is generated over $\mathbb{Q}$ by $R_1,\ldots,R_{24}$. In particular, given $D_1,D_2\in\NS(X)$, we have that $D_1=D_2$ if and only if $D_1\cdot R_i=D_2\cdot R_i$ for all $i\in\{1,\ldots,24\}$.
\end{corollary}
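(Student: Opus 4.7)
The plan is to sandwich $\rho(X)$ between $16$ and $16$ by combining the explicit orthogonal splitting from Lemma~\ref{thesublatticesSandQ} with a period-theoretic dimension count for the family $\mathfrak{X}\to\mathcal{U}$.

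First I would observe that Lemma~\ref{thesublatticesSandQ} provides two sublattices $S,Q\subseteq\NS(X)$ with $Q\subseteq S^{\perp}$, where $S\cong U\oplus E_8$ has rank $10$ and $Q$ has rank $6$ (the determinant of the Gram matrix in (iii) is nonzero, so $Q$ is non-degenerate). Hence $S\oplus Q$ sits inside the sublattice $L\subseteq\NS(X)$ generated by $R_1,\ldots,R_{24}$, and has rank $16$. This gives the universal lower bound $\rho(X)\geq\rk L\geq 16$ for every triple-double K3 surface.

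Next I would match this with an upper bound using the fact that, by Observation~\ref{triple-doublek3sforma4dimfamily}, $\mathfrak{X}\to\mathcal{U}$ is a genuine $4$-dimensional family of K3 surfaces. Since the locus in the K3 moduli space where the Picard number is at least $17$ has codimension at least $17$ in the $20$-dimensional K3 period domain, a $4$-dimensional family cannot be entirely contained in it. Therefore a very general triple-double K3 surface satisfies $\rho(X)\leq 16$, which combined with the previous paragraph shows that the minimal Picard number in the family is exactly $16$.

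For the second assertion, assume $\rho(X)=16$. Then $\rk L = 16 = \rho(X)$, so $L$ has finite index in $\NS(X)$ and $L\otimes\mathbb{Q}=\NS(X)\otimes\mathbb{Q}$, which is exactly the statement that $R_1,\ldots,R_{24}$ generate $\NS(X)$ over $\mathbb{Q}$. For the \emph{in particular} clause, suppose $D_1,D_2\in\NS(X)$ satisfy $D_1\cdot R_i=D_2\cdot R_i$ for all $i$. Then $D_1-D_2$ pairs to zero against every element of the $\mathbb{Q}$-span of $R_1,\ldots,R_{24}$, which is all of $\NS(X)\otimes\mathbb{Q}$; since $\NS(X)$ is a primitive sublattice of the non-degenerate lattice $H^2(X;\mathbb{Z})$, its intersection form is non-degenerate, forcing $D_1=D_2$.

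I do not anticipate a real obstacle here: Lemma~\ref{thesublatticesSandQ} supplies the lower bound and Observation~\ref{triple-doublek3sforma4dimfamily} supplies the dimension input for the upper bound, so the corollary is essentially an assembly of existing ingredients. The only point requiring a moment of care is the appeal to the codimension estimate for the Noether--Lefschetz--type loci in the K3 moduli space, but this is a textbook consequence of the surjectivity of the period map combined with Lefschetz $(1,1)$.
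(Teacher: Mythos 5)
Your proposal is correct and takes essentially the same route as the paper: the lower bound $\rho(X)\geq16$ comes from the rank-$16$ sublattice $S\oplus Q$ of Lemma~\ref{thesublatticesSandQ}, the upper bound from the $4$-dimensionality of the family via Observation~\ref{triple-doublek3sforma4dimfamily} (the paper phrases this as dominance of $\mathcal{U}$ onto the $4$-dimensional moduli space of $(S\oplus Q)$-polarized K3 surfaces rather than via the codimension of the $\rho\geq17$ locus in the period domain, but the content is the same), and the final claim from non-degeneracy of the intersection form on $\NS(X)$. The only slip is your justification of that non-degeneracy: being a primitive sublattice of a non-degenerate lattice does not by itself imply non-degeneracy (an isotropic vector spans a degenerate primitive sublattice of $U$); the correct reason is the Hodge index theorem, equivalently the paper's observation that numerical and linear equivalence coincide on a K3 surface.
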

\begin{proof}
If $X$ is any triple-double K3 surface, then the sublattice $S\oplus Q\subseteq\NS(X)$ has rank $16$, hence $\rho(X)\geq\rk(S\oplus Q)=16$. To show that the minimum for $\rho(X)$ equals $16$, consider the $4$-dimensional coarse moduli space $\mathcal{M}$ of $(S\oplus Q)$-polarized K3 surfaces (see \cite{dolgachev}). Let $\mathfrak{X}\rightarrow\mathcal{U}$ be the family of triple-double K3 surfaces constructed in Definition~\ref{familyoftriple-doublek3surfaces}. Then the induced morphism $p\colon\mathcal{U}\rightarrow\mathcal{M}$ is dominant because $\mathfrak{X}\rightarrow\mathcal{U}$ is a $4$-dimensional family by Observation~\ref{triple-doublek3sforma4dimfamily}. This implies that the minimal Picard number for $X$ is $16$.

Let $L\subseteq\NS(X)$ be the sublattice of $\NS(X)$ generated by $R_1,\ldots,R_{24}$. Then $S\oplus Q\subseteq L\subseteq\NS(X)$, which implies that $L$ generates $\NS(X)$ over $\mathbb{Q}$ if $\rho(X)=16$.

The last statement about $D_1,D_2$ follows from this and from the fact that on a K3 surface numerical equivalence of divisors coincides with linear equivalence (see Section~\ref{k3surfacesbasics}).
\end{proof}
\begin{remark}
Consider the family of triple-double K3 surfaces $\mathfrak{X}\rightarrow\mathcal{U}$ in Definition~\ref{familyoftriple-doublek3surfaces}. Denote by $X_u$ the fiber over a point $u\in\mathcal{U}$. Then by \cite[Chapter 6, \S2.5]{huybrechts} the K3 surface $X_u$ has minimal Picard number for a \emph{very general} $u\in\mathcal{U}$ (i.e., for all $u\in\mathcal{U}\setminus Z$, where $Z$ is the union of countably many Zariski closed subsets of $\mathcal{U}$).
\end{remark}

%-------------------------

\subsection{The discriminant group of $Q$ and proof that $S\oplus Q=\NS(X)$}
\begin{lemma}
\label{explicitdiscriminantgroupofQ}
The discriminant group of $Q$ (and hence the discriminant group of $S\oplus Q$) is isomorphic to $\mathbb{Z}_2^2\oplus\mathbb{Z}_4^2$, and it is generated by the classes modulo $Q$ of the following elements in $Q^*$:
\begin{gather*}
v_1=\left(\frac{1}{2},-\frac{1}{2},0,0,\frac{1}{2},0\right),~v_2=\left(-\frac{1}{2},\frac{1}{2},0,0,0,0\right),\\
w_1=\left(\frac{1}{2},0,0,-\frac{1}{4},0,0\right),~w_2=\left(0,0,\frac{1}{4},-\frac{1}{4},-\frac{1}{4},-\frac{1}{4}\right),
\end{gather*}
where the coordinates are with respect to the $\mathbb{Z}$-basis $\mathcal{Q}$ (see Lemma~\ref{thesublatticesSandQ}).
\end{lemma}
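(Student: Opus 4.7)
The plan is to first determine the abstract structure of $A_Q$ and then exhibit explicit representatives for its generators. Since $S \cong U \oplus E_8$ is even unimodular by Lemma~\ref{thesublatticesSandQ}(i), we have $A_{S \oplus Q} \cong A_S \oplus A_Q \cong A_Q$, so it suffices to analyze $Q$.

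First I would compute the determinant of the Gram matrix $G$ of $Q$ in the basis $\mathcal{Q}$ listed in Lemma~\ref{thesublatticesSandQ}(iii). An expansion (for instance along the first row, reducing to $5 \times 5$ minors) should yield $|\det G| = 64$, which already forces $|A_Q| = 64$. To pin down the isomorphism type I would put $G$ into Smith normal form by integral row and column operations; the expected invariant factors are $(1,1,2,2,4,4)$, which gives $A_Q \cong \mathbb{Z}_2^{\oplus 2} \oplus \mathbb{Z}_4^{\oplus 2}$. The columns of the transformation matrix that moves $G$ to Smith normal form then provide, via $G^{-1}$, explicit rational generators of $Q^*/Q$ of the correct orders.

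Next I would check that the four elements $v_1, v_2, w_1, w_2$ proposed in the statement do lie in $Q^*$ and generate $A_Q$. Membership in $Q^*$ amounts to verifying that $G \cdot v_i$ and $G \cdot w_j$ are integer vectors for $i,j \in \{1,2\}$; this is a direct matrix-vector computation. The orders are read off from $b_{Q^*}(v_i, v_i)$ and $b_{Q^*}(w_j, w_j)$ modulo $2\mathbb{Z}$, and from which multiples of $v_i, w_j$ first land in $Q$: I expect $2v_1 \equiv 2v_2 \equiv 0$ and $4w_1 \equiv 4w_2 \equiv 0$ in $A_Q$, with no smaller nontrivial relation among the four classes. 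To confirm that $\langle v_1, v_2, w_1, w_2\rangle$ is all of $A_Q$, it suffices to check that these classes are $\mathbb{Z}$-independent in $A_Q$, which follows from a $4 \times 4$ minor computation: the matrix expressing $(v_1, v_2, w_1, w_2)$ in terms of the Smith-normal-form generators must be invertible modulo the appropriate cyclic factors.

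The main obstacle is essentially bookkeeping: carrying out the Smith normal form reduction of a $6 \times 6$ matrix with mixed signs and entries up to $16$ requires care, as does verifying the specific linear combinations defining $v_i, w_j$ evaluate to integers against every basis vector of $Q$. There is no conceptual difficulty beyond this, since the inclusion $Q \subseteq S^\perp \subseteq \NS(X)$ from Lemma~\ref{thesublatticesSandQ}(ii) guarantees that the computation can be performed entirely within the Gram matrix of $\mathcal{Q}$ without interference from $S$.
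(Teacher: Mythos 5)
Your proposal is correct and follows essentially the same route as the paper: a Smith normal form computation on the Gram matrix of $\mathcal{Q}$ to identify $A_Q\cong\mathbb{Z}_2^2\oplus\mathbb{Z}_4^2$, followed by verification of the explicit generators. The only cosmetic difference is that the paper applies Smith normal form to $B^{-1}$ (obtaining $\diag(1,1,\tfrac12,\tfrac12,\tfrac14,\tfrac14)$) and reads the generators $v_1,v_2,w_1,w_2$ directly off the rows of $M_1B^{-1}$, whereas you work with $B$ itself and then check membership in $Q^*$ and generation separately.
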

\begin{proof}
Let $B$ be the Gram matrix of $Q$ associated to the $\mathbb{Z}$-basis $\mathcal{Q}$ in Lemma~\ref{thesublatticesSandQ}(iii). Then the lattice $Q^*$ is generated over $\mathbb{Z}$ by the rows of $B^{-1}$, and to understand the discriminant group of $Q$ we compute the Smith normal form of $B^{-1}$. This can be done using Sage (see \cite[function smith\underline{\hspace{.25cm}}form()]{stein}), which gives us $M_1,M_2\in\SL_{6}(\mathbb{Z})$ such that $M_1B^{-1}M_2$ is the diagonal matrix $\diag\left(1,1,\frac{1}{2},\frac{1}{2},\frac{1}{4},\frac{1}{4}\right)$. This implies that $A_Q\cong\mathbb{Z}_2^2\oplus\mathbb{Z}_4^2$. Moreover, the rows of $M_1B^{-1}$ give us an alternative $\mathbb{Z}$-basis of $Q^*$ to work with. One has
\begin{displaymath}
M_1B^{-1}=\left( \begin{array}{cccccc}
0&0&0&0&0&1\\
1&0&0&0&1&0\\
\frac{1}{2}&-\frac{1}{2}&0&0&\frac{1}{2}&0\\
-\frac{1}{2}&\frac{1}{2}&0&0&0&0\\
\frac{1}{2}&0&0&-\frac{1}{4}&0&0\\
0&0&\frac{1}{4}&-\frac{1}{4}&-\frac{1}{4}&-\frac{1}{4}
\end{array} \right).
\end{displaymath}
The first $2$ rows represent elements in $Q$ and the last $4$ are generators of the discriminant group $A_Q$, which we denote by $v_1,v_2,w_1,w_2$ respectively.
\end{proof}
\begin{lemma}
\label{isotropicvectorsnotinNSX}
Let $v_1,v_2,w_1,w_2\in Q^*$ as in Lemma~\ref{explicitdiscriminantgroupofQ}. Then the values of the symmetric bilinear form $b_{Q^*}$ evaluated at pairs of these vectors are shown in the following table.
\begin{displaymath}
\begin{array}{c|cccc}
b_{Q^*}&v_1&v_2&w_1&w_2\\
\hline\\[-0.4cm]
v_1&-5&\frac{5}{2}&-1&-\frac{1}{2}\\[.1cm]
v_2&\frac{5}{2}&-2&1&\frac{1}{2}\\[.1cm]
w_1&-1&1&-\frac{3}{2}&-\frac{5}{4}\\[.1cm]
w_2&-\frac{1}{2}&\frac{1}{2}&-\frac{5}{4}&-\frac{11}{4}
\end{array}
\end{displaymath}
In particular, the only isotropic elements in the discriminant group $A_Q$ with respect to the discriminant quadratic form $q_Q\colon A_Q\rightarrow\mathbb{Q}/2\mathbb{Z}$ are the classes of
\begin{equation*}
2w_1,~v_2,~v_2+2w_1,~v_1+2w_2,~v_1+2w_1+2w_2,~v_1+v_2,~v_1+v_2+2w_1.
\end{equation*}
Moreover, these are not contained in $\NS(X)$.
\end{lemma}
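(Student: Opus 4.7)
The three assertions are computational and I address them in turn.

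\textbf{Intersection table.} For any $x,y\in Q\otimes\mathbb{Q}$ given by coordinates with respect to the basis $\mathcal{Q}$, one has $b_{Q^*}(x,y)=x^{T}By$, where $B$ is the Gram matrix in Lemma~\ref{thesublatticesSandQ}(iii). Substituting the explicit coordinates of $v_1,v_2,w_1,w_2$ and simplifying produces the ten entries of the symmetric table.

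\textbf{Isotropic classes.} The group $A_Q\cong\mathbb{Z}_2^{2}\oplus\mathbb{Z}_4^{2}$ has generators $[v_1],[v_2]$ of order $2$ and $[w_1],[w_2]$ of order $4$, so every element is uniquely of the form $\alpha=a_1[v_1]+a_2[v_2]+b_1[w_1]+b_2[w_2]$ with $a_i\in\{0,1\}$ and $b_j\in\{0,1,2,3\}$. Using the table from the first part, $q_Q(\alpha)\in\mathbb{Q}/2\mathbb{Z}$ becomes a quadratic polynomial in $(a_1,a_2,b_1,b_2)$. A direct enumeration of the $64$ cases, or the $40$ orbits remaining after the symmetry $\alpha\leftrightarrow-\alpha$, yields exactly eight isotropic classes: the zero class and the seven listed.

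\textbf{Non-containment in $\NS(X)$.} Since $S$ is unimodular and orthogonal to $Q$ in $\NS(X)\otimes\mathbb{Q}$, every class in $A_{S\oplus Q}=A_Q$ has a representative in $Q^*$. If one of the seven non-trivial isotropic $v\in Q^*$ were in $\NS(X)$, then $v\cdot R_j\in\mathbb{Z}$ for every $j=1,\ldots,24$, because $\NS(X)$ is an integral lattice containing each $R_j$. Each basis vector of $\mathcal{Q}$ is an explicit integer combination of the $R_k$'s by Lemma~\ref{thesublatticesSandQ}, and every pairing $R_k\cdot R_j$ is encoded in the dual graph of Figure~\ref{dualgraph24smoothrationalcurves}. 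Hence $v\cdot R_j$ is a rational number computable directly from these data. For each of the seven classes I would exhibit one curve $R_j$ against which the pairing is a strict half-integer (or otherwise fails to be integral), ruling that class out.

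The \textbf{main obstacle} is clerical rather than conceptual: the third step amounts to producing, for each of the seven classes, a discriminating curve $R_j$ together with its non-integral pairing. The arithmetic is elementary, but the case analysis is tedious, and a compact way to organize the argument is to tabulate $v\cdot R_j$ for each candidate $v$ against a short selection of the $R_j$ rather than computing all $7\times 24$ values by hand.
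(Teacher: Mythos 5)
Your first two steps (computing the table and enumerating the isotropic classes by running over all $(a,b,c,d)\in\{0,1\}^2\times\{0,1,2,3\}^2$) are exactly what the paper does and are fine. The gap is in the third step: the integrality test you propose is vacuous here and cannot rule out any of the seven classes. Each candidate $v$ lies in $Q^*$, and, viewed inside $\NS(X)\otimes\mathbb{Q}$, it is orthogonal to $S$; hence $v$ pairs integrally with every element of $S\oplus Q$. But every one of the curves $R_1,\ldots,R_{24}$ lies in $S\oplus Q$ (a priori the lattice $L$ they generate satisfies $S\oplus Q\subseteq L\subseteq\NS(X)$, and one checks that each $R_j$ is an integral combination of the chosen bases of $S$ and $Q$ — indeed the theorem being proved forces $L=S\oplus Q$). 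Consequently $v\cdot R_j\in\mathbb{Z}$ for \emph{all} $j$ and all seven candidates, so no ``discriminating curve'' exists. More generally, integrality against elements already known to lie in $\NS(X)$ is a necessary condition for membership, never a sufficient one: it can only detect classes that fail to extend the known sublattice compatibly, and here there is nothing for it to detect.

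The paper's actual mechanism is geometric, not arithmetic: Observation~\ref{keyfact} (via Riemann--Roch) shows a K3 surface carries no even set of four disjoint smooth rational curves, and Theorem~\ref{theoremaboutcardinalitiesofevensets} pins even sets to cardinality $8$ or $16$. One then shows that each of the seven isotropic classes is congruent modulo $\NS(X)$ to $\tfrac{1}{2}(R_a+R_b+R_c+R_d)$ for four disjoint curves among the $R_j$; for three of the classes this requires first adding an auxiliary element such as $\tfrac{1}{2}(R_{13}+\cdots+R_{20})$, which is shown to lie in $\NS(X)$ by exploiting a linear relation like $R_{13}+R_{14}+R_{17}+R_{18}=R_{15}+R_{16}+R_{19}+R_{20}$ (verified through Corollary~\ref{importantpropositionforequalitiesinNS}). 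Membership of $v$ in $\NS(X)$ would then produce a forbidden even $4$-set, a contradiction. Without an input of this kind — something that sees $\NS(X)$ itself rather than only the sublattice generated by known classes — the third assertion cannot be established.
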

\begin{proof}
A direct calculation gives the values in the table. Recall that the coordinates of the vectors $v_1,v_2,w_1,w_2$ are with respect to the $\mathbb{Z}$-basis $\mathcal{Q}$ in Lemma~\ref{explicitdiscriminantgroupofQ}. So, for instance, we have that
\begin{equation*}
v_1=\frac{1}{2}R_{16}-\frac{1}{2}(R_{14}-R_{21}+R_{22})+\frac{1}{2}(R_{12}-R_{10}+R_{18}+R_{20}).
\end{equation*}

The listed isotropic vectors are easily obtained after computing all the possible $(av_1+bv_2+cw_1+dw_2)^2$ with $((a,b),(c,d))\in\{0,1\}^2\times\{0,1,2,3\}^2$.

Given two vectors $u_1,u_2\in Q^*$, we write $u_1\approx u_2$ if $u_1-u_2\in\NS(X)$. Then we have that
\begin{gather*}
2w_1\approx\frac{R_{17}+R_{18}+R_{19}+R_{20}}{2},\\
v_2\approx\frac{R_{14}+R_{16}+R_{21}+R_{22}}{2},\\
v_1+v_2\approx\frac{R_{10}+R_{12}+R_{18}+R_{20}}{2},\\
v_1+v_2+2w_1\approx\frac{R_{10}+R_{12}+R_{17}+R_{19}}{2}.
\end{gather*}
This implies that $2w_1,v_2,v_1+v_2,v_1+v_2+2w_1\notin\NS(X)$ by Observation~\ref{keyfact}.

We are left to show that $v_2+2w_1,v_1+2w_2,v_1+2w_1+2w_2\notin\NS(X)$. We analyze them separately.
\begin{itemize}
\item Assume by contradiction that $v_2+2w_1\in\NS(X)$. Then we have that
\begin{equation*}
v_2+2w_1\approx\frac{R_{14}+R_{16}+R_{17}+R_{18}+R_{19}+R_{20}+R_{21}+R_{22}}{2}=\alpha,
\end{equation*}
and $\alpha\in\NS(X)$. Using Corollary~\ref{importantpropositionforequalitiesinNS}, it is easy to observe that $R_{13}+R_{14}+R_{17}+R_{18}=R_{15}+R_{16}+R_{19}+R_{20}$ because $(R_{13}+R_{14}+R_{17}+R_{18})\cdot R_i=(R_{15}+R_{16}+R_{19}+R_{20})\cdot R_i$ for all $i=1,\ldots,24$. In particular, we have that
\begin{equation*}
\frac{R_{13}+R_{14}+R_{17}+R_{18}+R_{15}+R_{16}+R_{19}+R_{20}}{2}=\alpha_1\in\NS(X).
\end{equation*}
But then $\alpha+\alpha_1\in\NS(X)$, which contradicts Observation~\ref{keyfact} because
\begin{equation*}
\alpha+\alpha_1\approx\frac{R_{13}+R_{15}+R_{21}+R_{22}}{2}.
\end{equation*}
\item Similarly, assume that $v_1+2w_2\in\NS(X)$. Then we have that
\begin{gather*}
v_1+2w_2\\
\approx\frac{R_2+R_3+R_{11}+R_{12}+R_{14}+R_{16}+R_{17}+R_{18}+R_{21}+R_{22}}{2}=\beta,
\end{gather*}
and $\beta\in\NS(X)$. Using Corollary~\ref{importantpropositionforequalitiesinNS}, we can verify that $R_{11}+R_{12}+R_{14}+R_{16}=R_1+R_3+R_{21}+R_{22}$, so that
\begin{equation*}
\frac{R_{11}+R_{12}+R_{14}+R_{16}+R_1+R_3+R_{21}+R_{22}}{2}=\beta_1\in\NS(X).
\end{equation*}
But then $\beta+\beta_1\in\NS(X)$, which contradicts Observation~\ref{keyfact} because
\begin{equation*}
\beta+\beta_1\approx\frac{R_1+R_2+R_{17}+R_{18}}{2}.
\end{equation*}
\item If $v_1+2w_1+2w_2\in\NS(X)$, then
\begin{gather*}
v_1+2w_1+2w_2\\
\approx\frac{R_2+R_3+R_{11}+R_{12}+R_{14}+R_{16}+R_{19}+R_{20}+R_{21}+R_{22}}{2}=\gamma,
\end{gather*}
and $\gamma\in\NS(X)$. Let $\beta_1$ as in the previous point. Then $\gamma+\beta_1\in\NS(X)$, which is not allowed by Observation~\ref{keyfact} because
\begin{equation*}
\gamma+\beta_1\approx\frac{R_1+R_2+R_{19}+R_{20}}{2}.
\end{equation*}
\end{itemize}
\end{proof}
\begin{theorem}
\label{mainresultNSgeneratedby24smoothrationalcurves}
Let $X$ be a triple-double K3 surface with $\rho(X)=16$. Then the following hold:
\begin{itemize}
\item[(i)] $\NS(X)=S\oplus Q\cong U\oplus E_8\oplus Q$;
\item[(ii)] $\NS(X)$ is generated be the smooth rational curves $R_1,\ldots,R_{24}$;
\item[(iii)] The discriminant group of $\NS(X)$ is $\mathbb{Z}_2^2\oplus\mathbb{Z}_4^2$.
\end{itemize}
\end{theorem}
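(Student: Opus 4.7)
The plan is to establish parts (i)--(iii) simultaneously by proving $\NS(X) = S \oplus Q$. By Corollary~\ref{importantpropositionforequalitiesinNS} we have $\rho(X) = 16 = \rk(S \oplus Q)$, so $\NS(X)$ is a finite-index even overlattice of $S \oplus Q$ (note that $S \cap Q = 0$, since $Q \subseteq S^\perp$ by Lemma~\ref{thesublatticesSandQ}(ii) and $S$ is non-degenerate). Theorem~\ref{overlatticesinbijectionwithisotropicsubgroups} then gives a bijection between such overlattices and isotropic subgroups of the discriminant group $A_{S \oplus Q}$ under $q_{S \oplus Q}$. Since $S \cong U \oplus E_8$ is unimodular (Lemma~\ref{thesublatticesSandQ}(i)), we have $A_{S \oplus Q} \cong A_Q \cong \mathbb{Z}_2^2 \oplus \mathbb{Z}_4^2$ by Lemma~\ref{explicitdiscriminantgroupofQ}.

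The crux is to rule out every nonzero isotropic subgroup of $A_Q$. Suppose for contradiction that $\NS(X) \supsetneq S \oplus Q$. Then $\NS(X)/(S \oplus Q)$ is a nontrivial isotropic subgroup of $A_Q$, hence contains a nonzero isotropic class. Writing $(S \oplus Q)^* = S \oplus Q^*$ (using $S^* = S$ and the orthogonality of $S$ and $Q$), any representative $y \in \NS(X)$ of such a class decomposes uniquely as $y = s + q$ with $s \in S$ and $q \in Q^*$; since $s \in S \subseteq \NS(X)$, we obtain $q = y - s \in \NS(X)$, so the corresponding class in $A_Q$ has a representative lying in $Q^* \cap \NS(X)$. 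But Lemma~\ref{isotropicvectorsnotinNSX} enumerates the seven nonzero isotropic classes in $A_Q$ and checks that none of them has a representative in $\NS(X)$, a contradiction. This yields (i), and (iii) is then immediate since $A_{\NS(X)} = A_{S \oplus Q} \cong A_Q \cong \mathbb{Z}_2^2 \oplus \mathbb{Z}_4^2$.

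For (ii), the elements listed in the bases $\mathcal{S}$ and $\mathcal{Q}$ of Lemma~\ref{thesublatticesSandQ} are already explicit $\mathbb{Z}$-linear combinations of the curves $R_1, \ldots, R_{24}$. Hence $S \oplus Q$ is contained in the sublattice $L \subseteq \NS(X)$ generated by these $24$ smooth rational curves, and the equality $L = \NS(X)$ follows at once from (i).

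The main obstacle has already been absorbed into the two preceding lemmas: Lemma~\ref{explicitdiscriminantgroupofQ} identifies a workable set of generators for $A_Q$ via a Smith normal form computation, and Lemma~\ref{isotropicvectorsnotinNSX} carries out the delicate case analysis that rules out each of the seven candidate classes, using Observation~\ref{keyfact} to exclude potential ``even fours'' of disjoint $(-2)$-curves. Once those inputs are available, the argument above reduces to a formal application of Nikulin's theory of overlattices.
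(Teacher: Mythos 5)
Your proposal is correct and follows essentially the same route as the paper: identify $\NS(X)$ as an even overlattice of $S\oplus Q$, invoke Theorem~\ref{overlatticesinbijectionwithisotropicsubgroups} to reduce to isotropic subgroups of $A_Q$, and use Lemma~\ref{isotropicvectorsnotinNSX} to rule out every nonzero isotropic class, with (ii) and (iii) following formally. Your explicit justification that a nonzero isotropic class in $\NS(X)/(S\oplus Q)$ forces one of the listed representatives in $Q^*$ to lie in $\NS(X)$ (via $(S\oplus Q)^*=S\oplus Q^*$) is a small detail the paper leaves implicit, but the argument is the same.
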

\begin{proof}
If $\rho(X)=16$, then $\NS(X)$ is an even overlattice of $S\oplus Q$. Therefore $\NS(X)/(S\oplus Q)$ corresponds to a subgroup of $A_{(S\oplus Q)}\cong A_Q$ which is isotropic with respect to the discriminant quadratic form $q_Q\colon A_Q\rightarrow\mathbb{Q}/2\mathbb{Z}$ (see Theorem~\ref{overlatticesinbijectionwithisotropicsubgroups}). However, Proposition~\ref{isotropicvectorsnotinNSX} shows that no isotropic vector of $A_Q$ can be contained in $\NS(X)$. This implies that $\NS(X)$ has to be equal to $S\oplus Q$, which recall is isometric to $U\oplus E_8\oplus Q$ by Lemma~\ref{thesublatticesSandQ}(i). It also follows that $\NS(X)$ is generated by $R_1,\ldots,R_{24}$ because the sublattice of $\NS(X)$ generated by these $24$ curves contains $S\oplus Q$. Finally, $A_{\NS(X)}=A_{(S\oplus Q)}\cong A_Q$, which is isomorphic to $\mathbb{Z}_2^2\oplus\mathbb{Z}_4^2$ by Lemma~\ref{explicitdiscriminantgroupofQ}.
\end{proof}
\begin{remark}
\label{exampleofbasiswhichgivessplitting}
If $X$ is a triple-double K3 surface with $\rho(X)=16$, then the following $\mathbb{Z}$-basis of $\NS(X)$ realizes it as a direct sum of $U,E_8$, and $Q$ (see Lemma~\ref{thesublatticesSandQ}):
\begin{gather*}
\NS(X)=S\oplus Q\\
=\langle2R_1+2R_4+4R_5+R_8+6R_9+5R_{13}+3R_{15}+4R_{17}+3R_{23},~R_3\rangle_\mathbb{Z}\\
\oplus\langle R_1,~R_5,~R_9,~R_{13},~R_{17},~R_{23},~R_4,~R_{15}\rangle_\mathbb{Z}\\
\oplus\langle R_{16},~R_{14}-R_{21}+R_{22},~R_{11}-R_2+R_{19}-R_{20},\\
R_{17}+2R_{14}-R_{18}-R_{19}+R_{20},~R_{12}-R_{10}+R_{18}+R_{20},\\
R_3+2R_{22}-2R_6-R_{12}\rangle_\mathbb{Z}\cong U\oplus E_8\oplus Q.
\end{gather*}
\end{remark}
\begin{remark}
For a triple-double K3 surface $X$ with $\rho(X)=16$, a splitting $\NS(X)\cong U\oplus E_8\oplus P$ for some lattice $P$ is predicted abstractly by Theorem~\ref{splittingofevenlattices} as follows. We know that $\NS(X)$ is even, of signature $(1,15)$, and discriminant group $\mathbb{Z}_2^2\oplus\mathbb{Z}_2^4$. It follows from Theorem~\ref{splittingofevenlattices}(i) that $\NS(X)$ is isometric to $E_8\oplus P'$ for some lattice $P'$. But then $P'$ is even, of signature $(1,7)$, and its discriminant group is also $\mathbb{Z}_2^2\oplus\mathbb{Z}_4^2$. In particular, we can apply Theorem~\ref{splittingofevenlattices}(ii) to $P'$ to argue that $P'=U\oplus P$ for some lattice $P$. In conclusion, we have that $\NS(X)\cong U\oplus E_8\oplus P$, but all we know about $P$ is that it is even, of signature $(0,6)$, and discriminant group $\mathbb{Z}_2^2\oplus\mathbb{Z}_4^2$. However, in our case we were able to provide an explicit lattice $Q$ (see Lemma~\ref{thesublatticesSandQ}(iii)) which realizes the splitting $\NS(X)\cong U\oplus E_8\oplus Q$.
\end{remark}

%-------------------------

\subsection{The transcendental lattice of a triple-double K3 surface with minimal Picard number}
\begin{proposition}
\label{transcendentallatticetriple-doublek3surface}
Let $X$ be a triple-double K3 surface with $\rho(X)=16$. Then the transcendental lattice $T_X$ is isometric to $U\oplus U(2)\oplus\langle-4\rangle^{\oplus2}$.
\end{proposition}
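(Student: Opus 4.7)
\emph{Proposal.} The plan is to apply Nikulin's uniqueness theorem (Theorem~\ref{uniquenessofevenlattices}) to identify $T_X$ with the candidate lattice $L=U\oplus U(2)\oplus\langle-4\rangle^{\oplus2}$.

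First I would note that $T_X=\NS(X)^\perp$ inside $H^2(X;\mathbb{Z})\cong U^{\oplus3}\oplus E_8^{\oplus2}$, and that $\NS(X)$ sits in the K3 lattice primitively (Section~\ref{k3surfacesbasics}). Hence Theorem~\ref{sublatticeandperpsamediscriminantgroup} combined with Theorem~\ref{mainresultNSgeneratedby24smoothrationalcurves} yields that $T_X$ is an even lattice of signature $(2,4)$ with $A_{T_X}\cong A_{\NS(X)}\cong\mathbb{Z}_2^2\oplus\mathbb{Z}_4^2$ and $q_{T_X}=-q_{\NS(X)}$. In particular $\rk(T_X)=6$ and $\ell(A_{T_X})=4$, so $T_X$ is indefinite and satisfies $t_++t_-=6\geq 2+\ell(A_{T_X})$; consequently, Theorem~\ref{uniquenessofevenlattices} will pin $T_X$ down up to isometry as soon as its signature and discriminant form are matched with those of a candidate.

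Next I would verify that $L=U\oplus U(2)\oplus\langle-4\rangle^{\oplus2}$ has the right invariants: it is even, of signature $(1,1)+(1,1)+(0,1)+(0,1)=(2,4)$, its discriminant group is $0\oplus\mathbb{Z}_2^2\oplus\mathbb{Z}_4\oplus\mathbb{Z}_4\cong\mathbb{Z}_2^2\oplus\mathbb{Z}_4^2$, and its discriminant quadratic form is the orthogonal sum of the hyperbolic form $u$ on $\mathbb{Z}_2^2$ (coming from $U(2)$, with Gram matrix $\bigl(\begin{smallmatrix}0&1/2\\1/2&0\end{smallmatrix}\bigr)$) and two copies of the $\mathbb{Z}_4$-form whose generator has quadratic value $-\tfrac14\bmod 2\mathbb{Z}$ (one from each $\langle-4\rangle$).

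The crux is to verify that $q_{T_X}$ is isomorphic to this form. Since $\NS(X)=S\oplus Q$ with $S\cong U\oplus E_8$ unimodular, we have $q_{T_X}=-q_Q$, and the values of $-q_Q$ on the generators $v_1,v_2,w_1,w_2$ of $A_Q$ are obtained by negating the entries of the table in Lemma~\ref{isotropicvectorsnotinNSX}. I expect this matching to be the main technical obstacle, and I would carry it out by explicitly diagonalizing the resulting $4\times4$ form matrix on $\mathbb{Z}_2^2\oplus\mathbb{Z}_4^2$: first correct $w_1,w_2$ by suitable integer multiples of each other and of $v_1,v_2$ to produce two mutually orthogonal order-$4$ classes, orthogonal to the $\mathbb{Z}_2$-part, and both of quadratic value $-\tfrac14$; then check that the residual non-degenerate form on $\mathbb{Z}_2^2$ has vanishing diagonal and off-diagonal entry $\tfrac12$, so that it coincides with $u$ rather than with the other non-degenerate form on $\mathbb{Z}_2^2$. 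Once this is done, Theorem~\ref{uniquenessofevenlattices} gives $T_X\cong U\oplus U(2)\oplus\langle-4\rangle^{\oplus2}$.
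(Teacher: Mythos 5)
Your proposal is correct and follows essentially the same route as the paper: both reduce to matching signature and discriminant quadratic form and then invoke Theorem~\ref{uniquenessofevenlattices}, with the only work being a change of basis on the generators $v_1,v_2,w_1,w_2$ of $A_Q$ to exhibit $q_{\NS(X)}$ as $u\oplus\langle\tfrac14\rangle\oplus\langle\tfrac14\rangle$ (the paper uses the explicit basis $v_2,\ v_1+v_2+2w_1,\ v_1+2w_1-w_2,\ v_1+w_1-w_2$). No substantive differences.
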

\begin{proof}
The transcendental lattice $T_X$ is an even lattice of signature $(2,4)$. Let us study its discriminant quadratic form. The table in the statement of Lemma~\ref{isotropicvectorsnotinNSX} gives the discriminant quadratic form of the lattice $\NS(X)=S\oplus Q$ with respect to $v_1,v_2,w_1,w_2$, whose classes modulo $Q$ generate the discriminant group. Changing basis to $v_2,v_1+v_2+2w_1,v_1+2w_1-w_2,v_1+w_1-w_2$, the discriminant quadratic form becomes
\begin{displaymath}
\left( \begin{array}{cccc}
0&-\frac{1}{2}&0&0\\
-\frac{1}{2}&0&0&0\\
0&0&\frac{1}{4}&0\\
0&0&0&\frac{1}{4}
\end{array} \right),
\end{displaymath}
where the entries on the main diagonal (resp. off the main diagonal) are considered modulo $2\mathbb{Z}$ (resp. modulo $\mathbb{Z}$). It follows from Theorem~\ref{sublatticeandperpsamediscriminantgroup} that the discriminant group of $T_X$ is $\mathbb{Z}_2^2\oplus\mathbb{Z}_4^2$ ($\NS(X)$ is a primitive sublattice of the even unimodular lattice $H^2(X;\mathbb{Z})$), and its discriminant quadratic form is the opposite of the matrix above. Observe that the lattice $U\oplus U(2)\oplus\langle-4\rangle^{\oplus2}$ is even, of signature $(2,4)$, and its discriminant quadratic form equals the discriminant quadratic form of $T_X$. We can conclude by Theorem~\ref{uniquenessofevenlattices} that $T_X$ is isometric to $U\oplus U(2)\oplus\langle-4\rangle^{\oplus2}$.
\end{proof}
Recall that projective Kummer surfaces are special types of projective K3 surfaces obtained as the minimal resolution of the quotient of an abelian surface $A$ by the inversion morphism $a\mapsto-a$. We denote such Kummer surface by $\Km(A)$. A celebrated example of Kummer surface is $\Km(E_i\times E_i)$, where $E_i$ is the elliptic curve $\mathbb{C}/(\mathbb{Z}\oplus i\mathbb{Z})$. The automorphism group of $\Km(E_i\times E_i)$ is studied in \cite{keumkondo}, and in \cite{garbagnatisarti09} this Kummer surface was used to compute $\Omega_G$ and $\Omega_G^\perp$ for $G=\mathbb{Z}_2\oplus\mathbb{Z}_4,\mathbb{Z}_2^2,\mathbb{Z}_4$.

\begin{observation}
The transcendental lattice of $\Km(E_i\times E_i)$ is isometric to $\langle4\rangle^{\oplus2}$ (see \cite[Section 1]{keumkondo}), and $\langle4\rangle^{\oplus2}$ admits a primitive embedding into the transcendental lattice of a triple-double K3 surface with minimal Picard number, which is $U\oplus U(2)\oplus\langle-4\rangle^{\oplus2}$ by Proposition~\ref{transcendentallatticetriple-doublek3surface}. To show this, consider the sublattice $L\subset U\oplus U(2)\oplus\langle-4\rangle^{\oplus2}$ generated by $\alpha=(1,2,0,0,0,0),\beta=(0,0,1,1,0,0)$. We have that $L$ is isometric to $\langle4\rangle^{\oplus2}$ because $\alpha^2=\beta^2=4$ and $\alpha\cdot\beta=0$. To prove that $L$ is a primitive sublattice, assume we have $m(x,y,z,w,u,v)\in L$ for some integer $m>1$ and $(x,y,z,w,u,v)\in U\oplus U(2)\oplus\langle-4\rangle^{\oplus2}$. Then $y=2x,z=w,u=v=0$, which implies that $(x,y,z,w,u,v)\in L$.

Therefore, one can ask if the Kummer surface $\Km(E_i\times E_i)$ is a specialization of the family of triple-double K3 surfaces. If this is true, then there should be a configuration of three pairs of lines in $\mathbb{P}^2$ such that the minimal resolution of the appropriate $\mathbb{Z}_2^3$-cover of $\mathbb{P}^2$ gives $\Km(E_i\times E_i)$ (by appropriate we mean the usual chain of three double covers). The next theorem describes explicitly this line arrangement.
\end{observation}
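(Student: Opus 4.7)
The Observation asserts two things: that the transcendental lattice of $\Km(E_i\times E_i)$ is $\langle 4\rangle^{\oplus 2}$ (which is cited from \cite{keumkondo}), and that $\langle 4\rangle^{\oplus 2}$ admits a primitive embedding into $U\oplus U(2)\oplus\langle-4\rangle^{\oplus 2}$. Proposition~\ref{transcendentallatticetriple-doublek3surface} already identifies the latter with $T_X$ for $X$ a triple-double K3 surface of minimal Picard number, so my task reduces to exhibiting the primitive sublattice. The plan is to produce two explicit vectors, one supported in the $U$-summand and one in the $U(2)$-summand, each of square $4$; they will automatically be orthogonal, and primitivity will follow from a coordinate-wise divisibility check.

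First, I would fix Gram matrices: $U$ with basis $e_1,e_2$ satisfying $e_1^2=e_2^2=0$, $e_1\cdot e_2=1$, and $U(2)$ with basis $f_1,f_2$ satisfying $f_1^2=f_2^2=0$, $f_1\cdot f_2=2$. To find square $4$ in $U$, I look for $ae_1+be_2$ with $2ab=4$, whence $(a,b)=(1,2)$ works; in $U(2)$ the equation $4cd=4$ forces $(c,d)=(1,1)$. Setting $\alpha=e_1+2e_2$ and $\beta=f_1+f_2$ (viewed inside $U\oplus U(2)\oplus\langle-4\rangle^{\oplus2}$ with the last two coordinates zero) one immediately gets $\alpha^2=\beta^2=4$ and $\alpha\cdot\beta=0$, so the sublattice $L=\mathbb{Z}\alpha\oplus\mathbb{Z}\beta$ is isometric to $\langle 4\rangle^{\oplus 2}$.

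For primitivity, I would argue as follows. Suppose $w\in U\oplus U(2)\oplus\langle-4\rangle^{\oplus 2}$ satisfies $mw\in L$ for some integer $m>1$; write $mw=a\alpha+b\beta$ with $a,b\in\mathbb{Z}$. Comparing coordinates, the last two entries of $mw$ vanish, so the $\langle-4\rangle^{\oplus 2}$-part of $w$ is zero; the $U$-coordinates of $mw$ are $(a,2a)$, forcing the $U$-part of $w$ to be of the form $(x,2x)$, i.e.\ an integer multiple of $\alpha$; similarly the $U(2)$-coordinates $(b,b)$ force the $U(2)$-part of $w$ to lie in $\mathbb{Z}\beta$. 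Hence $w\in L$, proving primitivity.

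The only conceivable obstacle is guessing the right vectors, but the ansatz ``take square-$4$ vectors in the $U$ and $U(2)$ summands'' almost writes itself, given the signature constraints and the need to produce two divisors of $\langle 4\rangle$. The primitivity step is essentially a tautology once the vectors have distinct-coordinate support, so I do not anticipate any genuine difficulty. The second paragraph of the Observation is motivational and is addressed by the subsequent theorem, which I would not attempt here.
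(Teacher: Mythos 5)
Your proposal is correct and coincides with the paper's own argument: you choose exactly the same vectors $\alpha=e_1+2e_2=(1,2,0,0,0,0)$ and $\beta=f_1+f_2=(0,0,1,1,0,0)$, verify $\alpha^2=\beta^2=4$, $\alpha\cdot\beta=0$, and prove primitivity by the same coordinate comparison ($y=2x$, $z=w$, $u=v=0$) showing any $w$ with $mw\in L$ already lies in $L$. No gaps; nothing further is needed.
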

\begin{theorem}
\label{explicitlinearrangementthatgiveskmeixei}
Consider three pairs of lines $(\ell_0,\ell_1),(\ell_2,\ell_3),(\ell_4,\ell_5)$ in $\mathbb{P}^2$ such that the resulting line arrangement has exactly four triple intersection points as shown in Figure~\ref{linearrangementKm(EixEi)} (this is unique up to an automorphism of $\mathbb{P}^2$). Then the minimal resolution of the appropriate $\mathbb{Z}_2^3$-cover of $\mathbb{P}^2$ branched along these three pairs is isomorphic to the singular Kummer surface $\Km(E_i\times E_i)$.
\end{theorem}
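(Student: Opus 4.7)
The plan is to normalize the special arrangement, analyze the enhanced singularity structure of the $\mathbb{Z}_2^3$-cover, and identify the outcome with $\Km(E_i\times E_i)$ via the classification of singular K3 surfaces. Up to $\mathrm{PGL}_3$, any configuration of six lines with four triple points arranged as in Figure~\ref{linearrangementKm(EixEi)} is the complete quadrangle on $[1{:}0{:}0],[0{:}1{:}0],[0{:}0{:}1],[1{:}1{:}1]$, i.e.\ the lines $x,y,z,x-y,y-z,x-z$; the pairing into opposite edges consistent with the figure is
\begin{equation*}
(\ell_0,\ell_1)=(z,\,x-y),\quad(\ell_2,\ell_3)=(y,\,x-z),\quad(\ell_4,\ell_5)=(x,\,y-z).
\end{equation*}
Inserting these linear forms into the $\mathbb{P}^5$-embedding of Proposition~\ref{prooftripledoublecoverisak3}, $X_3$ is cut out by $W_3^2=z(x-y)$, $W_4^2=y(x-z)$, $W_5^2=x(y-z)$; the identity $z(x-y)-y(x-z)+x(y-z)=0$ then forces the extra relation $W_3^2-W_4^2+W_5^2=0$ on $X_3$, which is the algebraic fingerprint of the four triple points.

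Next I would compare the singularities of $X_3$ with the generic situation. Over each of the three diagonal points $[1{:}1{:}0]$, $[1{:}0{:}1]$, $[0{:}1{:}1]$, where two lines of a single pair meet, the analysis in Observation~\ref{otherwaystoseetriple-doublek3surfaces} produces four $A_1$ singularities as before. Over each triple point exactly one line from each of the three pairs passes through: dehomogenizing at $[1{:}0{:}0]$ and taking the three successive double covers shows that the third cover is locally modeled by $W_5^2=(W_4-W_3)(W_4+W_3)$, contributing a single $A_1$ node. Thus $X_3$ acquires $12+4=16$ nodes, and the minimal resolution $X$ contains four additional $(-2)$-curves beyond the $24$ of Theorem~\ref{mainresultNSgeneratedby24smoothrationalcurves}. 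These four new classes must be independent of the generic N\'eron-Severi lattice (otherwise the generic equality $\rho(X)=16$ would be violated), so they raise $\rho(X)$ to $20$ and $X$ becomes a singular K3 surface in the sense of Shioda.

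To conclude that $X\cong\Km(E_i\times E_i)$ I would invoke the classification of singular K3 surfaces: such a surface is determined up to isomorphism by its transcendental lattice. By \cite{keumkondo}, $T_{\Km(E_i\times E_i)}\cong\langle 4\rangle^{\oplus 2}$, and by the observation preceding the theorem this lattice embeds primitively in the generic $T_X=U\oplus U(2)\oplus\langle-4\rangle^{\oplus 2}$. It then suffices to verify that the primitive rank-$2$ sublattice of the generic $T_X$ orthogonal to the four new $(-2)$-classes is isometric to $\langle 4\rangle^{\oplus 2}$, which amounts to a concrete discriminant-form computation. The main obstacle is precisely this final identification: one must determine the intersection pattern of the four new exceptional curves with the existing $24$, and from it pin down the sublattice they cut out. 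An alternative geometric route avoids this step by using the factorization $W_4^2=(W_3+iW_5)(W_3-iW_5)$ to exhibit an elliptic fibration on $X$ whose generic fiber has $j$-invariant $1728$, and then matching its singular fibers and Mordell-Weil group with one of the known elliptic fibrations of $\Km(E_i\times E_i)$ in \cite{keumkondo}.
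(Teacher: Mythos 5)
There is a genuine gap: your main line of argument stops exactly where the real work begins. Establishing that the special cover has $16$ nodes and hence that $X$ is (plausibly) a singular K3 surface does not identify it; by the Shioda--Inose classification you must actually compute the rank-two transcendental lattice and show it is $\langle 4\rangle^{\oplus 2}$, and you explicitly defer this ("the main obstacle is precisely this final identification") without carrying it out. The fact that $\langle 4\rangle^{\oplus 2}$ embeds primitively into the generic $T_X=U\oplus U(2)\oplus\langle-4\rangle^{\oplus 2}$ only shows the specialization is not obstructed; it does not pin down which rank-two lattice actually occurs. Moreover, your justification that the four new exceptional classes are independent of the rank-$16$ generic N\'eron--Severi lattice --- "otherwise the generic equality $\rho(X)=16$ would be violated" --- is not a valid inference: the generic Picard number says nothing about linear dependence among curve classes on a special fiber, so even $\rho=20$ requires an intersection-theoretic verification you have not supplied. (Your local analysis at a triple point, yielding a single $A_1$ over each, is correct, and the normalization to the complete quadrilateral is fine.)

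For comparison, the paper avoids the lattice computation entirely and instead follows something close to your final "alternative geometric route": it writes the special surface as the hypersurface $X_1^2Y_0^2Z_0^2+X_0^2Y_1^2Z_0^2+X_0^2Y_0^2Z_1^2+X_1^2Y_1^2Z_1^2=0$ in $(\mathbb{P}^1)^3$ (the member of the family where four of the eight coefficients vanish), takes the genus-$1$ fibration given by projection to one $\mathbb{P}^1$ factor, realizes the general fiber as a double cover of $\mathbb{P}^1$ branched at four explicit points, and after a M\"obius transformation matches these with the branch points of the explicit elliptic fibration of $\Km(E_i\times E_i)$ given in Garbagnati--Sarti (their fibration (6) with $\tau=1$). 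If you want to complete your proof, the most economical fix is to carry out precisely this matching: exhibit the fibration coming from $W_4^2=(W_3+iW_5)(W_3-iW_5)$ (or from a projection in the $(\mathbb{P}^1)^3$ model), compute its four branch points as functions of the base parameter, and identify the resulting family of elliptic curves with a known fibration of $\Km(E_i\times E_i)$; noting only that the $j$-invariant is $1728$ fiberwise is not sufficient, since that does not determine the surface.
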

\begin{proof}
Let $\Bl_3\mathbb{P}^2$ be the blow up of $\mathbb{P}^2$ at the three marked points in Figure~\ref{linearrangementKm(EixEi)}. Let $X\rightarrow\Bl_3\mathbb{P}^2$ be the appropriate $\mathbb{Z}_2^3$-cover branched along the three pairs of lines $(\widehat{\ell}_0,\widehat{\ell}_1),(\widehat{\ell}_2,\widehat{\ell}_3),(\widehat{\ell}_4,\widehat{\ell}_5)$. In particular, $X$ has exactly four $A_1$ singularities, one over each triple intersection point of the line arrangement. Following Observation~\ref{otherwaystoseetriple-doublek3surfaces}, the surface $X$ can be viewed as the following hypersurface in $(\mathbb{P}^1)^3$:
\begin{equation*}
X_1^2Y_0^2Z_0^2+X_0^2Y_1^2Z_0^2+X_0^2Y_0^2Z_1^2+X_1^2Y_1^2Z_1^2=0.
\end{equation*}
The blow up of $X$ at the four $A_1$ singularities is a K3 surface.

Consider the genus $1$ fibration on $X$ given by the restriction to $X$ of the projection $\pi_3\colon([X_0:X_1],[Y_0:Y_1],[Z_0:Z_1])\mapsto[Z_0:Z_1]$. The general fiber of this fibration is a genus $1$ curve in $\mathbb{P}^1\times\mathbb{P}^1$ given by
\begin{equation*}
C\colon \lambda^2X_1^2Y_0^2+\lambda^2X_0^2Y_1^2+\mu^2X_0^2Y_0^2+\mu^2X_1^2Y_1^2=0,
\end{equation*}
for a general $[\lambda:\mu]\in\mathbb{P}^1$. The restriction to $C$ of the projection $([X_0:X_1],[Y_0:Y_1])\mapsto[Y_0:Y_1]$ realizes $C$ as a double cover of $\mathbb{P}^1$ branched along $[i\lambda:\mu],[-i\lambda:\mu],[i\mu:\lambda],[-i\mu:\lambda]$. If we set $\sigma=i(\lambda/\mu)$ in the affine patch of $\mathbb{P}^1$ where $Y_1\neq0$, then the four branch points above become respectively
\begin{equation*}
\sigma,~-\sigma,~-\frac{1}{\sigma},~\frac{1}{\sigma}.
\end{equation*}
Using the automorphism of $\mathbb{P}^1$ given by $z\mapsto(\frac{\sigma+\sigma^3}{2})\cdot\frac{z-\sigma}{\sigma z-1}$, we can move these branch points to
\begin{equation*}
0,~\sigma^2,~\frac{(1+\sigma^2)^2}{4},~\infty,
\end{equation*}
respectively. But then the elliptic fibration $\pi_3|_X\colon X\rightarrow\mathbb{P}^1$ is isomorphic to the elliptic fibration (6) in \cite[Section 4]{garbagnatisarti09} (set $\tau=1$), which gives $\Km(E_i\times E_i)$.
\end{proof}
\begin{figure}
\centering
\includegraphics[scale=0.60,valign=t]{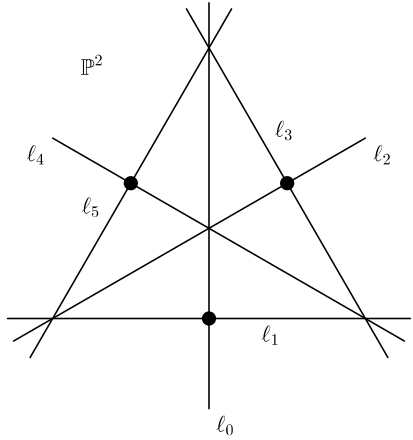}
\caption{The three pairs of lines $(\ell_0,\ell_1),(\ell_2,\ell_3),(\ell_4,\ell_5)$ in $\mathbb{P}^2$ such that the minimal resolution of the appropriate $\mathbb{Z}_2^3$-cover of $\mathbb{P}^2$ branched along these three pairs gives $\Km(E_i\times E_i)$}
\label{linearrangementKm(EixEi)}
\end{figure}

%-------------------------

\subsection{Further properties of triple-double K3 surfaces}
\label{furtherpropertiesofthelatticensx}
The next proposition, among other things, shows that triple-double K3 surfaces with minimal Picard number are disjoint from the family of K3 surfaces with $\mathbb{Z}_2^3$ symplectic action.
\begin{proposition}
\label{XnoZ24andnoZ23symplecticactions}
Let $X$ be a triple-double K3 surface with $\rho(X)=16$. Then the following hold:
\begin{itemize}
\item[(i)] $X$ does not admit $\mathbb{Z}_2^3$ symplectic action. In particular, $X$ does not admit $\mathbb{Z}_2^4$ symplectic action, hence \cite[Proposition 6.2]{garbagnatisarti09} cannot be used to compute $\NS(X)$;
\item[(ii)] $X$ is not isomorphic to the minimal resolution of the quotient of a K3 surface by a symplectic action of the group $\mathbb{Z}_2^3$. In particular, $X$ is not isomorphic to the minimal resolution of the quotient of a K3 surface by a symplectic action of the group $\mathbb{Z}_2^4$, hence $\NS(X)$ cannot be computed using \cite[Theorem 8.3]{garbagnatisarti16}.
\end{itemize}
\end{proposition}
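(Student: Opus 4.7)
The plan is to derive contradictions in both (i) and (ii) from the explicit lattice data of $X$---namely $\NS(X)\cong U\oplus E_8\oplus Q$ from Theorem~\ref{mainresultNSgeneratedby24smoothrationalcurves} and $T_X\cong U\oplus U(2)\oplus\langle-4\rangle^{\oplus 2}$ from Proposition~\ref{transcendentallatticetriple-doublek3surface}---contrasted with the invariants attached to a symplectic $\mathbb{Z}_2^3$-action by Garbagnati--Sarti (Theorem~\ref{garbagnatisartiresults}). I would first observe that in each statement the $\mathbb{Z}_2^4$ part follows from the $\mathbb{Z}_2^3$ part: a symplectic $\mathbb{Z}_2^4$-action restricts to a symplectic $\mathbb{Z}_2^3$-action on any index-$2$ subgroup, and any hypothetical realization $X\cong\widetilde{Y/\mathbb{Z}_2^4}$ factors as $X\cong\widetilde{Y'/\mathbb{Z}_2^3}$ through the resolved intermediate quotient $Y'=\widetilde{Y/\mathbb{Z}_2}$ by any chosen $\mathbb{Z}_2\leq\mathbb{Z}_2^4$.

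For part (i), assuming $\mathbb{Z}_2^3$ acts symplectically on $X$, I would use that the co-invariant lattice $\Omega_{\mathbb{Z}_2^3}$ is explicitly described in \cite[Proposition 5.1]{garbagnatisarti09} and must embed primitively into $\NS(X)$. Since $\rho(X)=16$, its orthogonal complement $M$ in $\NS(X)$ would be an even lattice of rank $2$ and signature $(1,1)$, whose discriminant form is determined, via Theorems~\ref{sublatticeandperpsamediscriminantgroup} and~\ref{overlatticesinbijectionwithisotropicsubgroups}, from the discriminant form of $\NS(X)$ (computed in Lemma~\ref{isotropicvectorsnotinNSX}) and that of $\Omega_{\mathbb{Z}_2^3}$. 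The plan is to enumerate the admissible isotropic glue subgroups $H\subseteq A_{\Omega_{\mathbb{Z}_2^3}}\oplus A_M$ and show that no rank-$2$ even lattice of signature $(1,1)$ realizes any of the resulting discriminant forms, yielding the contradiction.

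For part (ii), assuming $X\cong\widetilde{Y/\mathbb{Z}_2^3}$ for some K3 surface $Y$ carrying a symplectic $\mathbb{Z}_2^3$-action, the pull-back through the rational quotient map $Y\dashrightarrow X$ induces a finite-index embedding $T_X(8)\hookrightarrow T_Y$, where $T_X(8)$ denotes $T_X$ with bilinear form scaled by $|\mathbb{Z}_2^3|=8$ and $T_Y$ is the transcendental lattice of $Y$, orthogonal to $\Omega_{\mathbb{Z}_2^3}\subseteq\NS(Y)$ in the unimodular $H^2(Y;\mathbb{Z})$. I would feed the $T_X$ of Proposition~\ref{transcendentallatticetriple-doublek3surface} into this constraint: comparing the discriminant form of $T_X(8)$ against the overlattices allowed by the description in \cite[Proposition 5.1]{garbagnatisarti09}, a discriminant-form case analysis shows no compatible $T_Y$ can exist, ruling out~(ii).

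The principal obstacle is the lattice bookkeeping in (i): enumerating glue subgroups up to equivalence, computing the induced discriminant form on $M$, and certifying in each case that no even rank-$2$ lattice of signature $(1,1)$ realizes it. The same discriminant-form machinery underpins (ii), which becomes algorithmic once the scaled embedding $T_X(8)\hookrightarrow T_Y$ and its compatibility with $\Omega_{\mathbb{Z}_2^3}$ have been set up.
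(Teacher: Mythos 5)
Your reduction of the $\mathbb{Z}_2^4$ statements to the $\mathbb{Z}_2^3$ ones is fine and matches the paper. However, for both (i) and (ii) what you give is a strategy rather than a proof: in each case the decisive step is an enumeration/case analysis that you do not carry out and whose outcome you assert without identifying the actual obstruction. For (i), the criterion you invoke (primitive embedding of $\Omega_{\mathbb{Z}_2^3}$ into $\NS(X)$) is correct, but the plan of computing all glue subgroups $H\subseteq A_{\Omega_{\mathbb{Z}_2^3}}\oplus A_M$ for a rank-$2$ complement $M$ and then showing ``no rank-$2$ even lattice of signature $(1,1)$ realizes the resulting discriminant form'' is both very heavy (here $A_{\Omega_{\mathbb{Z}_2^3}}$ has order $2^{10}$) and of doubtful shape: indefinite even lattices realize a great many discriminant forms, so nonexistence at the level of the complement's genus is not where the contradiction is likely to appear. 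The paper instead passes to the equivalent dual criterion --- $T_X$ must embed primitively into $\Omega_{\mathbb{Z}_2^3}^\perp\cong U(2)^{\oplus3}\oplus\langle-4\rangle^{\oplus2}$ --- and observes that $T_X\cong U\oplus U(2)\oplus\langle-4\rangle^{\oplus2}$ contains a vector of square $2$ while every vector of $\Omega_{\mathbb{Z}_2^3}^\perp$ has square divisible by $4$, so no embedding (primitive or not) exists. That one-line divisibility check is the content you are missing.

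For (ii) the gap is more serious: the correct iff-criterion is not the transcendental-lattice scaling relation you set up, but the primitive embedding into $\NS(X)$ of the lattice $M_{\mathbb{Z}_2^3}$ generated by the $14$ exceptional $(-2)$-curves of the resolution together with their half-sums (\cite[Definition 2.5]{garbagnati16}, \cite[Corollary 8.9]{garbagnatisarti16}); your condition $T_X(8)\hookrightarrow T_Y\hookrightarrow\Omega_{\mathbb{Z}_2^3}^\perp$ is only a necessary condition, the index of $\pi^*T_X$ in $T_Y$ is not pinned down, and the resulting discriminant group (of order $2^{24}$ for $T_X(8)$) makes the proposed ``case analysis'' impractical and its conclusion unverified --- you never exhibit the incompatibility. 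The paper's route is to compute $M_{\mathbb{Z}_2^3}^\perp\cong\langle2\rangle^{\oplus2}\oplus U(2)\oplus\langle-2\rangle^{\oplus4}$, in which every pairing $x\cdot y$ is even, and to note that $T_X$ (which would have to embed primitively there) contains a hyperbolic plane $U$ and hence two vectors with $x\cdot y=1$. To repair your write-up you would need to (a) replace the strategies with these (or other) concrete obstructions, and (b) in (ii) either justify why your weaker necessary condition already fails or switch to the $M_{\mathbb{Z}_2^3}$ criterion.
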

\begin{proof}
To prove part (i), $X$ admits $\mathbb{Z}_2^3$ symplectic action if and only if the transcendental lattice $T_X$ embeds primitively into $\Omega_{\mathbb{Z}_2^3}^\perp$ (see \cite{nikulin79}). But $T_X\cong U\oplus U(2)\oplus\langle-4\rangle^{\oplus2}$ by Proposition~\ref{transcendentallatticetriple-doublek3surface} and $\Omega_{\mathbb{Z}_2^3}^\perp\cong U(2)^{\oplus3}\oplus\langle-4\rangle^{\oplus2}$ by \cite[Proposition 5.1]{garbagnatisarti09}. Moreover, $(1,1,0,0,0,0)\in U\oplus U(2)\oplus\langle-4\rangle^{\oplus2}$ squares giving $2$ and $4\mid x^2$ for all $x\in U(2)^{\oplus3}\oplus\langle-4\rangle^{\oplus2}$. Therefore, $T_X$ cannot embed primitively into $\Omega_{\mathbb{Z}_2^3}^\perp$.

For part (ii), we first need some preliminaries. Let $M_{\mathbb{Z}_2^3}$ be the abstract lattice generated by the following vectors:
\begin{gather*}
m_1,\ldots,m_{14},\frac{\sum_{i=1}^8m_i}{2},\frac{\sum_{i=5}^{12}m_i}{2},\\
\frac{m_1+m_2+m_5+m_6+m_9+m_{10}+m_{13}+m_{14}}{2},
\end{gather*}
where $m_i^2=-2$ and $m_1\cdot m_j=0$ for all $i,j\in\{1,\ldots,14\}$, $i\neq j$. The lattice $M_{\mathbb{Z}_{2}^3}$ is negative definite of rank $14$ and has discriminant group $\mathbb{Z}_2^8$. By doing similar calculations to the ones we did to determine the discriminant quadratic form of $T_X$ given generators for $\NS(X)$, one can show that the discriminant quadratic form of $M_{\mathbb{Z}_{2}^3}^\perp$, the orthogonal complement of $M_{\mathbb{Z}_2^3}$ in $H^2(X;\mathbb{Z})$, is isomorphic to the discriminant quadratic form of $\langle2\rangle^{\oplus2}\oplus U(2)\oplus\langle-2\rangle^{\oplus4}$. This implies by Theorem~\ref{uniqueness2-elementarylattices} that $M_{\mathbb{Z}_2^3}^\perp$ is isometric to the lattice $\langle2\rangle^{\oplus2}\oplus U(2)\oplus\langle-2\rangle^{\oplus4}$. Now, $X$ is isomorphic to the minimal resolution of the quotient of a K3 surface by a symplectic action of the group $\mathbb{Z}_2^3$ if and only if the lattice $M_{\mathbb{Z}_2^3}$ embeds primitively into $\NS(X)$ (see \cite[Definition 2.5]{garbagnati16} and \cite[Corollary 8.9]{garbagnatisarti16}), or equivalently if and only if $T_X$ embeds primitively into $M_{\mathbb{Z}_2^3}^\perp$. But $T_X\cong U\oplus U(2)\oplus\langle-4\rangle^{\oplus2}$, and in particular there are two vectors $x,y\in T_X$ such that $x\cdot y=1$. However, for any two vectors $x,y\in M_{\mathbb{Z}_2^3}^\perp\cong\langle2\rangle^{\oplus2}\oplus U(2)\oplus\langle-2\rangle^{\oplus4}$, the product $x\cdot  y$ is even.
\end{proof}
\begin{corollary}
Let $X$ be a triple-double K3 surface with $\rho(X)=16$. Then $X$ does not contain $14$ disjoint smooth rational curves.
\end{corollary}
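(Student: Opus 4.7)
Proof plan: I will argue by contradiction, deriving conflicting bounds on the number of linearly independent even eights formed by the hypothetical $14$ disjoint smooth rational curves. Suppose $D_1,\ldots,D_{14}$ are $14$ disjoint smooth rational curves on $X$. Let $N=\langle D_1,\ldots,D_{14}\rangle\cong\langle-2\rangle^{\oplus14}$, let $\widetilde{N}$ be its saturation in $\NS(X)$, and set $P:=\widetilde{N}^\perp_{\NS(X)}$, which has rank $2$ and signature $(1,1)$. By Theorem~\ref{theoremaboutcardinalitiesofevensets} and Observation~\ref{keyfact}, every non-zero element of the $2$-elementary group $\widetilde{N}/N$ corresponds to an even eight among the $D_i$'s (cardinality $16$ is impossible with only $14$ curves, and cardinality $4$ is ruled out). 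Set $k:=\dim_{\mathbb{F}_2}\widetilde{N}/N$, so that $|A_{\widetilde{N}}|=2^{14-2k}$ and $A_{\widetilde{N}}$ is $2$-elementary.

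The first step is a combinatorial upper bound $k\leq 3$. The $2^k-1$ non-zero elements of $\widetilde{N}/N$ correspond to $2^k-1$ subsets of $\{1,\ldots,14\}$ each of size $8$. Associating to each curve $D_c$ its characteristic vector $v_c\in\mathbb{F}_2^k$ (recording which of $k$ fixed basis even eights contain $D_c$), and observing that $D_c$ lies in the even eight indexed by $v\neq 0$ if and only if $v\cdot v_c=1$, a double count of incidence pairs yields $2^{k-1}(14-n_0)=8(2^k-1)$, where $n_0$ is the number of curves with $v_c=0$. This simplifies to $n_0=2^{4-k}-2$, and non-negativity forces $k\leq 3$.

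The second step is a lattice-theoretic lower bound $k\geq 4$. Recall from Theorem~\ref{mainresultNSgeneratedby24smoothrationalcurves}(iii) that $A_{\NS(X)}\cong\mathbb{Z}_2^2\oplus\mathbb{Z}_4^2$ has $4$-rank equal to $2$. Since $A_{\widetilde{N}}$ is $2$-elementary (so has $4$-rank $0$) and the $4$-rank is non-increasing under subgroups and quotients, the $4$-rank of $A_P$ must be $\geq 2$; combined with the fact that $P$ has rank $2$ (so $A_P$ is generated by at most two elements), this forces $A_P\cong\mathbb{Z}_{4a}\oplus\mathbb{Z}_{4b}$ for some integers $a,b\geq 1$. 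The gluing subgroup $H:=\NS(X)/(\widetilde{N}\oplus P)$ is $2$-elementary and embeds (via each projection, being the graph of an anti-isometry) into both $A_{\widetilde{N}}$ and $A_P[2]\cong\mathbb{Z}_2\oplus\mathbb{Z}_2$, so $|H|\leq 4$. Substituting into the gluing formula $|A_{\widetilde{N}}|\cdot|A_P|=|A_{\NS(X)}|\cdot|H|^2$ yields $2^{14-2k}\cdot 16ab=2^6\,|H|^2$, so $ab=2^{2k+2\log_2|H|-12}$; positivity of $ab$ together with $|H|\leq 4$ forces $k\geq 4$.

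The contradiction $k\leq 3<4\leq k$ completes the proof. The main obstacle will be the careful lattice-theoretic bookkeeping in the second step, in particular the assertion that the $4$-part of the discriminant form on $\NS(X)$ lifts faithfully to $A_P$ through the gluing with the $2$-elementary lattice $\widetilde{N}$, and the verification that the gluing subgroup $H$ is constrained to sit inside the $2$-torsion of $A_P$.
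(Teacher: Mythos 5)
Your proof is correct, and it takes a genuinely different route from the paper's. The paper's own proof is essentially a two-line citation: by Nikulin's structure theorem for $14$ disjoint $(-2)$-curves (Proposition~\ref{minimalprimitivesublatticesdisjointsmoothrationalcurves}, via \cite[Corollary 8.9]{garbagnatisarti16}), such a configuration would force a primitive embedding of the rank-$14$ lattice $M_{\mathbb{Z}_2^3}$ into $\NS(X)$, which is exactly what Proposition~\ref{XnoZ24andnoZ23symplecticactions}(ii) excludes by comparing $T_X\cong U\oplus U(2)\oplus\langle-4\rangle^{\oplus2}$ with $M_{\mathbb{Z}_2^3}^\perp\cong\langle2\rangle^{\oplus2}\oplus U(2)\oplus\langle-2\rangle^{\oplus4}$, the latter having only even pairings. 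You bypass the structure theorem entirely: your double count correctly gives $2^{k-1}(14-n_0)=8(2^k-1)$, hence $n_0=2^{4-k}-2$ and $k\le3$ (note $2^{k-1}$ must divide $8$ since $2^k-1$ is odd), and your glue computation is sound --- both projections of $H$ are injective because $\widetilde N$ and $P$ are primitive in $\NS(X)$, the $4$-rank is indeed monotone under subquotients so $A_P\cong\mathbb{Z}_{4a}\oplus\mathbb{Z}_{4b}$, and then $2^{14-2k}=2^6|H|^2/(16ab)\le2^6$ forces $k\ge4$. What the paper's route buys is brevity and the stronger conclusion of Proposition~\ref{XnoZ24andnoZ23symplecticactions}(ii) (no symplectic $\mathbb{Z}_2^3$-quotient structure at all); what yours buys is self-containedness --- you only need Theorem~\ref{theoremaboutcardinalitiesofevensets} and the discriminant group $\mathbb{Z}_2^2\oplus\mathbb{Z}_4^2$ from Theorem~\ref{mainresultNSgeneratedby24smoothrationalcurves}(iii), not the computation of $T_X$ nor the identification of $M_{\mathbb{Z}_2^3}^\perp$ --- and as a byproduct your first step reproves, for any K3 surface, that $14$ disjoint $(-2)$-curves carry at most three independent even eights, which is consistent with (and a weak form of) Nikulin's result that the saturation is $M_{\mathbb{Z}_2^3}$.
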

\begin{proof}
If $X$ contains $14$ disjoint smooth rational curves, then the lattice $M_{\mathbb{Z}_2^3}$ (see the proof of Proposition~\ref{XnoZ24andnoZ23symplecticactions}(ii)) would be primitively embedded in $\NS(X)$. But this would contradict Proposition~\ref{XnoZ24andnoZ23symplecticactions}(ii) (see \cite[Corollary 8.9]{garbagnatisarti16}).
\end{proof}

%----------------------------------------------------------------------------------------------------

\section{$\mathbb{Z}_2^n$-covers of $\mathbb{P}^2$ branched along six lines and K3 surfaces}
\label{three4dimensionalfamiliesofk3surfacesrelated}

%-------------------------

\subsection{Hirzebruch-Kummer coverings}
\begin{definition}
Let $n$ be a positive integer and let $D\subset\mathbb{P}^2$ be a line arrangement such that no point in $\mathbb{P}^2$ belongs to all the lines. Let $Y'\rightarrow\mathbb{P}^2$ be the normal finite covering such that its restriction to $\mathbb{P}^2\setminus D$ is the Galois unramified covering associated to the monodromy homomorphism
\begin{equation*}
H_1(\mathbb{P}^2\setminus D;\mathbb{Z})\rightarrow H_1(\mathbb{P}^2\setminus D;\mathbb{Z})\otimes\mathbb{Z}_n.
\end{equation*}
Then the minimal resolution $Y$ of $Y'$ is called the \emph{Hirzebruch-Kummer covering of exponent $n$} of $\mathbb{P}^2$ branched along the configuration of lines. An explicit construction of the Hirzebruch-Kummer covering can be found in \cite[Definition 63]{catanese}. It turns out $Y'$ is a $\mathbb{Z}_n^{r-1}$-cover of $\mathbb{P}^2$ branched along $D$, where $r$ equals the number of lines in $D$.
\end{definition}
\begin{example}
\label{hirzebruchkummercoveringexponent2p2sixlines}
We are interested in the Hirzebruch-Kummer covering of exponent $2$ of $\mathbb{P}^2$ branched along six general lines (which is a $\mathbb{Z}_2^5$-cover of $\mathbb{P}^2$ branched along the six lines). This is constructed as follows. Let $[W_0:\ldots:W_5]$ be coordinates in $\mathbb{P}^5$. Then, for general coefficients $a_i,b_i,c_i$, $i=0,\ldots,5$, let $Y\subset\mathbb{P}^5$ be the following intersection:
\begin{displaymath}
\left\{ \begin{array}{ll}
a_0W_0^2+\ldots+a_5W_5^2=0,\\
b_0W_0^2+\ldots+b_5W_5^2=0,\\
c_0W_0^2+\ldots+c_5W_5^2=0.
\end{array} \right.
\end{displaymath}
$Y$ is smooth for a general choice of the coefficients, hence $Y$ is a K3 surface. Consider the restriction to $Y$ of the morphism $\mathbb{P}^5\rightarrow\mathbb{P}^5$ given by
\begin{equation*}
[W_0:\ldots:W_5]\mapsto[W_0^2:\ldots:W_5^2].
\end{equation*}
Then this realizes $Y$ as a $\mathbb{Z}_2^5$-cover of the linear subspace $H\subset\mathbb{P}^5$ given by
\begin{displaymath}
\left\{ \begin{array}{ll}
a_0W_0+\ldots+a_5W_5=0,\\
b_0W_0+\ldots+b_5W_5=0,\\
c_0W_0+\ldots+c_5W_5=0,
\end{array} \right.
\end{displaymath}
which is isomorphic to $\mathbb{P}^2$. The branch locus of this cover is given by the restrictions to $H$ of the six coordinate hyperplanes of $\mathbb{P}^5$, which correspond to six general lines in $\mathbb{P}^2$. Label the six lines $\ell_0,\ldots,\ell_5$ and denote by $\pi$ the covering map $Y\rightarrow\mathbb{P}^2$. Then $\pi$ is the Hirzebruch-Kummer covering of exponent $2$ of $\mathbb{P}^2$ branched along $\ell_0,\ldots,\ell_5$. As the coefficients $a_i,b_j,c_k$ vary, $Y$ describes a $4$-dimensional family of K3 surfaces, which was also considered in \cite[Section 10.2]{garbagnatisarti16}.
\end{example}
\begin{remark}
\label{maximalcoverproperty}
There is no $\mathbb{Z}_2^n$-cover of $\mathbb{P}^2$ branched along $\ell_0,\ldots,\ell_5$ for $n\geq6$, and any $\mathbb{Z}_2^n$ cover of $\mathbb{P}^2$ branched along $\ell_0,\ldots,\ell_5$ for $n\leq4$ is an appropriate quotient of the Hirzebruch-Kummer covering $Y$ constructed above (see \cite[Section 4]{catanese}).
\end{remark}

%-------------------------

\subsection{Relation with triple-double K3 surfaces}
\begin{observation}
\label{symplecticautomorphismshirzebruchkummercover}
Let $Y$ be the Hirzebruch-Kummer covering described in Example~\ref{hirzebruchkummercoveringexponent2p2sixlines}. Let $\iota_{01}$ be the restriction to $Y$ of the involution of $\mathbb{P}^5$ defined by
{\small
\begin{equation}
\label{symplecticinvolutiononhirzebruchkummercovering}
[W_0:W_1:W_2:W_3:W_4:W_5]\mapsto[-W_0:-W_1:W_2:W_3:W_4:W_5].
\end{equation}
}%
Define $\iota_{ij}$ analogously for all $i,j\in\{0,\ldots,5\}$, $i\neq j$. The involutions $\iota_{ij}$ form a subgroup of $\Aut(Y)$ isomorphic to $\mathbb{Z}_2^4$ acting symplectically on $Y$ because each involution has exactly eight fixed points. The eight fixed points of $\iota_{ij}$ are mapped to $\ell_i\cap\ell_j$ under $\pi$. One has that $\widetilde{Y/\mathbb{Z}_2^4}$ is the K3 surface given by the minimal resolution of the double cover of $\mathbb{P}^2$ branched along $\ell_0,\ldots,\ell_5$. This observation can also be found in \cite[Section 10.2]{garbagnatisarti16} and \cite[Section 4.3]{catanese}.
\end{observation}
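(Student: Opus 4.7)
The plan is to unwind the explicit construction of $Y$ from Example~\ref{hirzebruchkummercoveringexponent2p2sixlines} and verify each assertion by direct computation on the ambient $\mathbb{P}^5$. First I would observe that each $\iota_{ij}$ is the class in $\mathrm{PGL}_6(\mathbb{C})$ of the diagonal matrix with $-1$ in positions $i,j$ and $+1$ elsewhere, and that it preserves $Y$ because the three defining quadrics involve only $W_0^2,\ldots,W_5^2$. Products of the $\iota_{ij}$ correspond to sign changes supported on even-cardinality subsets of $\{0,\ldots,5\}$, of which there are $2^5=32$; modding out by the relation $-I\sim I$ (the total sign change, which acts trivially on $\mathbb{P}^5$) leaves a group of order $16$, which I would identify with $\mathbb{Z}_2^4$ by exhibiting four independent generators such as $\iota_{01},\iota_{02},\iota_{03},\iota_{04}$.

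Next I would count fixed points of $\iota_{ij}$ on $Y$. The fixed locus of $\iota_{ij}$ on $\mathbb{P}^5$ has two components: the $\mathbb{P}^3$ cut out by $W_i=W_j=0$ (eigenvalue $+1$) and the $\mathbb{P}^1$ cut out by $W_k=0$ for $k\notin\{i,j\}$ (eigenvalue $-1$). Restricting the three defining quadrics to that $\mathbb{P}^3$ yields three linear equations in the four quantities $W_k^2$ with $k\neq i,j$; for generic coefficients there is a unique projective solution in the squares, and choosing signs modulo the scalar $W\sim-W$ gives $2^4/2=8$ points. Restricted to the $\mathbb{P}^1$, the three quadrics become three linear equations in $(W_i^2,W_j^2)$, which generically admit no nonzero solution. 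Thus each $\iota_{ij}$ has exactly $8$ fixed points on $Y$, so Proposition~\ref{involutionwithexactlyeightisolatedfixedpointsissymplectic} forces it to be a Nikulin involution; since the whole group is abelian and generated by symplectic involutions, the full $\mathbb{Z}_2^4$-action is symplectic. Finally, any such fixed point satisfies $W_i=W_j=0$, so its image under $\pi$ lies in $H\cap\{W_i=W_j=0\}=\ell_i\cap\ell_j$.

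To identify $\widetilde{Y/\mathbb{Z}_2^4}$ I would note that the full Galois group of $\pi\colon Y\to\mathbb{P}^2$ is $\{\pm1\}^6/\{\pm I\}\cong\mathbb{Z}_2^5$ (all sign changes modulo the trivial scalar), and the subgroup $\mathbb{Z}_2^4$ generated by the $\iota_{ij}$ has index $2$ in it; hence $Y/\mathbb{Z}_2^4\to\mathbb{P}^2$ is a double cover. The inertia of $\pi$ above $\ell_i$ is the $\mathbb{Z}_2$ generated by the single sign change at $W_i$, which has odd weight and therefore maps nontrivially to $\mathbb{Z}_2^5/\mathbb{Z}_2^4$. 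Consequently $Y/\mathbb{Z}_2^4\to\mathbb{P}^2$ is ramified along every $\ell_i$, and must coincide with the double cover of $\mathbb{P}^2$ branched along $\ell_0+\cdots+\ell_5$, whose minimal resolution is the K3 surface $Z$ of the introduction. The main point requiring care throughout is keeping track of the quotient by the scalar $\pm I$: one must remember that the Galois group collapses from $\mathbb{Z}_2^6$ to $\mathbb{Z}_2^5$ in $\mathrm{PGL}_6(\mathbb{C})$, and that the even-weight subgroup correspondingly descends from $\mathbb{Z}_2^5$ to $\mathbb{Z}_2^4$. All other verifications reduce to routine Bezout counts together with the results already established in Section~\ref{preliminariesonk3surfacesandsymplecticautomorphisms}.
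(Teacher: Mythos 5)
Your verification is correct and follows the same route the paper intends: the paper justifies the observation only by the remark that each $\iota_{ij}$ has exactly eight fixed points (invoking Proposition~\ref{involutionwithexactlyeightisolatedfixedpointsissymplectic}) and otherwise defers to the cited references, whereas you supply the routine details — the even-weight sign-change description of the group modulo $\pm I$, the Bezout count of $2^4/2=8$ fixed points on the $+1$-eigenspace $\mathbb{P}^3$ and none on the $-1$-eigenspace $\mathbb{P}^1$, and the index-two inertia argument identifying $Y/\mathbb{Z}_2^4$ with the double cover branched along the six lines. All of these checks are sound, and your care with the collapse from $\mathbb{Z}_2^6$ to $\mathbb{Z}_2^5$ in $\mathrm{PGL}_6(\mathbb{C})$ is exactly the point that needs attention.
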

\begin{proposition}
\label{triple-doublek3surfacesandthehirzebruchkummercover}
Let $X$ be a triple-double K3 surface, and let $X_3\rightarrow\mathbb{P}^2$ be the corresponding $\mathbb{Z}_2^3$-cover branched along the three pairs of lines $(\ell_0,\ell_1),(\ell_2,\ell_3),(\ell_4,\ell_5)$ (so that $X=\widetilde{X}_3$). Let $Y$ be the corresponding Hirzebruch-Kummer covering in Example~\ref{hirzebruchkummercoveringexponent2p2sixlines}. Consider the symplectic action on $Y$ of the group $\{\id_Y,\iota_{01},\iota_{23},\iota_{45}\}\cong\mathbb{Z}_2^2$ (see Observation~\ref{symplecticautomorphismshirzebruchkummercover}). Then $X\cong\widetilde{Y/\mathbb{Z}_2^2}$.
\end{proposition}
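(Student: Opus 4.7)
My plan is to realize both $X$ and $\widetilde{Y/\mathbb{Z}_2^2}$ as the minimal resolution of the same singular surface $X_3$ constructed in Proposition~\ref{prooftripledoublecoverisak3}. Since $X=\widetilde{X_3}$ by definition, it suffices to produce an isomorphism $Y/H\cong X_3$, where $H:=\{\id_Y,\iota_{01},\iota_{23},\iota_{45}\}$.

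To identify $Y/H$, I would work out the Galois structure of $\pi\colon Y\to\mathbb{P}^2$. Its Galois group $G\cong\mathbb{Z}_2^5$ is naturally the quotient of $\mathbb{Z}_2^6=\bigoplus_{i=0}^{5}\langle\sigma_i\rangle$ by the diagonal $\sigma_0\sigma_1\sigma_2\sigma_3\sigma_4\sigma_5$, where $\sigma_i$ acts on the ambient $\mathbb{P}^5$ by $W_i\mapsto-W_i$. In this presentation the three involutions of Observation~\ref{symplecticautomorphismshirzebruchkummercover} read $\iota_{01}=\sigma_0\sigma_1$, $\iota_{23}=\sigma_2\sigma_3$, $\iota_{45}=\sigma_4\sigma_5$. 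The $H$-invariant polynomials in $W_0,\ldots,W_5$ are generated by the six squares $W_i^2$ together with the three products $V_{01}:=W_0W_1$, $V_{23}:=W_2W_3$, $V_{45}:=W_4W_5$, subject to $V_{01}^2=W_0^2W_1^2$ and the two analogues. On $Y$, the squares $W_i^2$ descend to the linear forms $L_i$ on the base $\mathbb{P}^2\subset\mathbb{P}^5$ cutting out the lines $\ell_i$ (Example~\ref{hirzebruchkummercoveringexponent2p2sixlines}), so the quotient $Y/H$ is realized as the subvariety
\[
V_{01}^2=L_0L_1,\quad V_{23}^2=L_2L_3,\quad V_{45}^2=L_4L_5.
\]
These are precisely the three quadrics defining $X_3\subset\mathbb{P}^5$ in the proof of Proposition~\ref{prooftripledoublecoverisak3}, with $V_{01},V_{23},V_{45}$ playing the role of $W_3,W_4,W_5$ there. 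This yields $Y/H\cong X_3$ as $\mathbb{Z}_2^3$-covers of $\mathbb{P}^2$.

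Since $H$ acts symplectically on $Y$ (Observation~\ref{symplecticautomorphismshirzebruchkummercover}), the minimal resolution $\widetilde{Y/H}$ is again a K3 surface by \cite[(8.10) Proposition(1)]{mukai}, so the identification of the previous paragraph lifts uniquely to the desired isomorphism $\widetilde{Y/H}\cong\widetilde{X_3}=X$.

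The main obstacle I foresee is the bookkeeping in the invariant-theoretic step: one must verify that the six branch lines of $\pi$ (the traces on the linear base $\mathbb{P}^2\subset\mathbb{P}^5$ of the six coordinate hyperplanes of $\mathbb{P}^5$) coincide, up to an automorphism of $\mathbb{P}^2$, with the six lines used to build the given triple-double K3 surface $X$, and that the pairing $\{0,1\},\{2,3\},\{4,5\}$ encoded in the choice of $\iota_{01},\iota_{23},\iota_{45}$ matches the triple-double pairing. Once those two pieces of data are matched, the comparison of defining equations is formal.
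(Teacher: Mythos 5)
Your argument is correct, but it takes a genuinely different route from the paper. The paper's proof is structural: it invokes Remark~\ref{maximalcoverproperty} (the universal property of the Hirzebruch--Kummer covering, following Catanese) to conclude that the $\mathbb{Z}_2^3$-cover $X_3\rightarrow\mathbb{P}^2$ must be a quotient of $Y$ by an order-$4$ subgroup of symplectic automorphisms, and then pins down that subgroup as $\{\id_Y,\iota_{01},\iota_{23},\iota_{45}\}$ by counting the $A_1$ singularities lying over $\ell_0\cap\ell_1$, $\ell_2\cap\ell_3$, $\ell_4\cap\ell_5$. You instead compute the quotient directly: the $H$-invariant subring of $\mathbb{C}[W_0,\ldots,W_5]$ is generated by the $W_i^2$ and the three products $W_0W_1$, $W_2W_3$, $W_4W_5$, and restricting to $Y$ reproduces verbatim the three quadrics $W_3^2=L_0L_1$, $W_4^2=L_2L_3$, $W_5^2=L_4L_5$ defining $X_3$ in Proposition~\ref{prooftripledoublecoverisak3}. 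Your approach buys an explicit, self-contained identification of $Y/H$ with $X_3$ as subvarieties of $\mathbb{P}^5$, without appealing to the classification of intermediate covers; the paper's approach is shorter and also implicitly shows uniqueness of the subgroup realizing $X$. Two small points you should make explicit to finish your version: (1) the identification of the six branch lines of $\pi$ with the six lines defining $X$, together with the pairing $\{0,1\},\{2,3\},\{4,5\}$, which you correctly flag (this is exactly what the word ``corresponding'' in the statement encodes, via the duality between six general lines and the coefficient matrix of Example~\ref{hirzebruchkummercoveringexponent2p2sixlines}); and (2) the passage from ``the invariants satisfy the equations of $X_3$'' to ``$Y/H\cong X_3$'', which needs one more line, e.g.\ that the nine listed invariants generate the whole invariant ring, so $Y/H$ maps isomorphically onto its image, an irreducible surface inside the irreducible two-dimensional complete intersection $X_3$, hence equal to it (alternatively, both surfaces are normal with the same function field). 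With those two remarks supplied, your proof is complete.
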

\begin{proof}
By Remark~\ref{maximalcoverproperty} we have that $X$ is isomorphic to the minimal resolution of the quotient of $Y$ by a subgroup $G\subset\mathbb{Z}_2^4$ of symplectic automorphisms. It follows that $G$ is isomorphic to $\mathbb{Z}_2^2$, and the only possibility for $G$ is to equal $\{\id_Y,\iota_{01},\iota_{23},\iota_{45}\}$. The reason for the latter statement is because this is the only choice so that $Y/G$ has exactly four singularities of type $A_1$ over each point $\ell_0\cap\ell_1,\ell_2\cap\ell_3,\ell_4\cap\ell_5$.
\end{proof}

%-------------------------

\subsection{Other K3 surfaces which are $\mathbb{Z}_2^n$-covers of $\mathbb{P}^2$ branched along six lines}
\label{otherK3surfacesascovers}
Consider six general lines in $\mathbb{P}^2$ and denote by $D$ the divisor on $\mathbb{P}^2$ given by their sum. Let $Y$ be the Hirzebruch-Kummer covering of exponent $2$ of $\mathbb{P}^2$ branched along $D$ in Example~\ref{hirzebruchkummercoveringexponent2p2sixlines}. Let $X$ be a triple-double K3 surface obtained as a $\mathbb{Z}_2^3$-cover of $\mathbb{P}^2$ branched along $D$. Lastly, let $Z$ be the the minimal resolution of the double cover of $\mathbb{P}^2$ branched along $D$. As we remarked in the Introduction, the lattices $\NS(Y)$ and $\NS(Z)$ are well known for $\rho(Y)=\rho(Z)=16$, and the lattice $\NS(X)$ is computed in the current paper for $\rho(X)=16$. Let $X'$ be the minimal resolution of the quotient of $X$ by one of the three symplectic involutions $\iota_{110},\iota_{101},\iota_{011}$ in Proposition~\ref{involutiontriple-doublek3surface}. In Section~\ref{NSX'andTX'computed}, we compute $\NS(X')$ for $X'$ with minimal Picard number. The surface $X'$ can be viewed as an appropriate $\mathbb{Z}_2^2$ cover of $\mathbb{P}^2$ branched along $D$. There are other K3 surfaces different from $X,X',Y,Z$ which are $\mathbb{Z}_2^n$-covers of $\mathbb{P}^2$ branched along six general lines with $n=3,4$. We will investigate their N\'eron-Severi and transcendental lattices in future work.
\begin{remark}
If $X'$ is as above, after computing $\NS(X')$ and $T_{X'}$ in Section~\ref{NSX'andTX'computed} we realize that $X'$ admits $\mathbb{Z}_2^4$ symplectic action (see Proposition~\ref{X'hasZ24symplecticaction}). Since $\rho(X')=16$, this implies that $\NS(X')$ can be computed using \cite{garbagnatisarti16}. However, in Section~\ref{NSX'andTX'computed} we are able to find an explicit $\mathbb{Z}$-basis for $\NS(X')$ and relate it to the geometry of $X'$.
\end{remark}

%----------------------------------------------------------------------------------------------------

\section{The N\'eron-Severi lattice of $X'$ with minimal Picard number}
\label{NSX'andTX'computed}

%-------------------------

\subsection{Construction of the family}
\label{familyofquotients}
Let $\mathfrak{X}\rightarrow\mathcal{U}$ be the family of triple-double K3 surfaces in Definition~\ref{familyoftriple-doublek3surfaces}. The involution $\iota_{011}$ in Proposition~\ref{involutiontriple-doublek3surface} acts on $\mathfrak{X}$ inducing a symplectic action on each fiber of $\mathfrak{X}\rightarrow\mathcal{U}$ ($\iota_{110},\iota_{101}$ are treated similarly). Let $\mathfrak{X}'$ be the minimal resolution of the quotient of $\mathfrak{X}$ by $\iota_{011}$. Then $\mathfrak{X}'\rightarrow\mathcal{U}$ is a 4-dimensional family of K3 surfaces, and we denote by $X'$ one of its fibers. We compute the lattices $\NS(X')$ and $T_{X'}$ for $X'$ with minimal Picard number in several steps.

%-------------------------

\subsection{A configuration of $20$ smooth rational curves on $X'$}
First of all, we want to understand what is the quotient by $\iota_{011}$ of the configuration of $24$ smooth rational curves $R_1,\ldots,R_{24}$ on $X$ in Figure~\ref{dualgraph24smoothrationalcurves} (observe that the eight fixed points of $\iota_{011}$ are in the complement of $R_1\cup\ldots\cup R_{24}$). Therefore, we need to understand how the involutions $\iota_{001}$ and $\iota_{010}$ act on these curves. In Figure~\ref{firsttwodoublecoverscurve}, we can see how $\iota_{001}$ acts by base change on the configuration fixing the eight curves with two branches on each one. Therefore, we have that
{\small
\begin{gather*}
\iota_{001}\cdot(1,\ldots,24)\\
=(3,4,1,2,7,8,5,6,9,10,11,12,15,16,13,14,19,20,17,18,21,22,23,24),
\end{gather*}
}%
where we only carry the indices of the curves $R_1,\ldots,R_{24}$ for simplicity. Similarly, we have that
{\small
\begin{gather*}
\iota_{010}\cdot(1,\ldots,24)\\
=(2,1,4,3,5,6,7,8,11,12,9,10,14,13,16,15,17,18,19,20,23,24,21,22),
\end{gather*}
}%
and hence
{\small
\begin{gather*}
\iota_{011}\cdot(1,\ldots,24)\\
=(4,3,2,1,7,8,5,6,11,12,9,10,16,15,14,13,19,20,17,18,23,24,21,22).
\end{gather*}
}%
In particular, we have the following classes of curves modulo $\iota_{011}$:
\begin{gather*}
\{R_1,R_4\},\{R_2,R_3\},\{R_5,R_7\},\{R_6,R_8\},\{R_9,R_{11}\},\{R_{10},R_{12}\},\\
\{R_{13},R_{16}\},\{R_{14},R_{15}\},\{R_{17},R_{19}\},\{R_{18},R_{20}\},\{R_{21},R_{23}\},\{R_{22},R_{24}\}.
\end{gather*}
Consider the commutative diagram (which we discussed in the proof of Proposition~\ref{involutionwithexactlyeightisolatedfixedpointsissymplectic})
\begin{center}
\begin{tikzpicture}[>=angle 90]
\matrix(a)[matrix of math nodes,
row sep=2em, column sep=2em,
text height=1.5ex, text depth=0.25ex]
{\Bl_8X&X'\\
X&X/\iota_{011},\\};
\path[->] (a-1-1) edge node[above]{$\pi$}(a-1-2);
\path[->] (a-1-1) edge node[left]{}(a-2-1);
\path[->] (a-1-2) edge node[left]{}(a-2-2);
\path[->] (a-2-1) edge node[left]{}(a-2-2);
\end{tikzpicture}
\end{center}
where $\Bl_8X$ is the blow up of $X$ at the eight fixed point of $\iota_{011}$. So $\pi\colon\Bl_8X\rightarrow X'$ is the $\mathbb{Z}_2$-cover of $X'$ branched along the eight exceptional divisors of the resolution $X'\rightarrow X/\iota_{011}$, which we denote by $N_1,\ldots,N_8$. If we set $C_1=\pi(R_1)=\pi(R_4)$, $C_2=\pi(R_2)=\pi(R_3)$, and so on, then the corresponding curve arrangement on $X'$ is shown in Figure~\ref{curvearrangementafterquotient}. In conclusion, we have two disjoint sets of smooth rational curves on $X'$ given by $\{C_1,\ldots,C_{12}\},\{N_1,\ldots,N_8\}$.
\begin{figure}
\centering
\includegraphics[scale=0.50,valign=t]{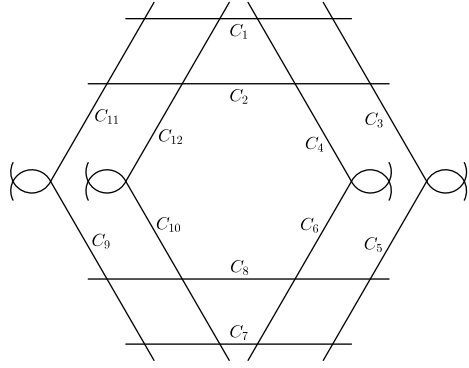}
\caption{Configuration of smooth rational curves on $X'$}
\label{curvearrangementafterquotient}
\end{figure}

%-------------------------

\subsection{Even eights on $X'$}
With the notation introduced above, we know that
\begin{equation*}
N=\frac{N_1+\ldots+N_8}{2}\in\NS(X'),
\end{equation*}
and the lattice generated by $N_1,\ldots,N_8,\frac{N_1+\ldots+N_8}{2}$ is called the \emph{Nikulin lattice}. Let us find more sets of curves on $X'$ whose sum is $2$-divisible.

We recall the following result on Nikulin, which can be found in \cite[Section 2.3]{garbagnati16}. We state it in a form that is convenient for us.
\begin{proposition}
\label{minimalprimitivesublatticesdisjointsmoothrationalcurves}
Let $L_1,\ldots,L_k$ be smooth disjoint rational curves on a K3 surface $W$.
\begin{itemize}
\item If $k=13$, then, up to reordering the indices,
\begin{equation*}
\frac{\sum_{i=1}^8L_i}{2},~\frac{\sum_{i=5}^{12}L_i}{2}\in\NS(W).
\end{equation*}
\item If $k=14$, then, up to reordering the indices, the following vectors are in $\NS(W)$:
\begin{equation*}
\frac{\sum_{i=1}^8L_i}{2},~\frac{\sum_{i=5}^{12}L_i}{2},~\frac{L_1+L_2+L_5+L_6+L_9+L_{10}+L_{13}+L_{14}}{2}.
\end{equation*}
\end{itemize}
\end{proposition}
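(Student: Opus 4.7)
The plan is to analyze the \emph{code of even sets} associated to the configuration $\{L_1,\ldots,L_k\}$ and use Theorem~\ref{theoremaboutcardinalitiesofevensets} together with a discriminant count to force this code to have large dimension. Let $K\subseteq\NS(W)$ be the sublattice generated by $L_1,\ldots,L_k$; then $K$ has Gram matrix $-2I_k$ and discriminant group $A_K\cong\mathbb{Z}_2^k$, with basis given by the classes of $\tfrac{1}{2}L_i$. Let $\overline{K}$ be the primitive closure of $K$ in $\NS(W)$ and set $C=\overline{K}/K\subseteq A_K$. A non-zero vector $v=\sum_i\tfrac{\epsilon_i}{2}L_i\in C$ corresponds to the subset $I_v=\{i:\epsilon_i=1\}\subseteq\{1,\ldots,k\}$, and the condition $v\in\NS(W)$ says exactly that $\{L_i\}_{i\in I_v}$ is an even set. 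Hence $C$ is a binary code in $\mathbb{Z}_2^k$ in which, by Theorem~\ref{theoremaboutcardinalitiesofevensets}, every non-zero word has weight $|I_v|\in\{8,16\}$.

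Next, the key step is to show $\dim_{\mathbb{Z}_2}C\geq k-11$. Since $\overline{K}$ is an overlattice of the $2$-elementary lattice $K$, it is itself $2$-elementary, with $\ell(A_{\overline{K}})=k-2\dim C$. The lattice $\overline{K}$ embeds primitively in $H^2(W;\mathbb{Z})\cong U^{\oplus3}\oplus E_8^{\oplus2}$, so by Theorem~\ref{sublatticeandperpsamediscriminantgroup}, $A_{\overline{K}}\cong A_{\overline{K}^\perp}$ and therefore $\ell(A_{\overline{K}})\leq\rk(\overline{K}^\perp)=22-k$. Combining these relations yields $\dim C\geq k-11$, giving $\dim C\geq 2$ when $k=13$ and $\dim C\geq 3$ when $k=14$.

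From here the problem is purely combinatorial: classify $\mathbb{Z}_2$-subspaces of $\mathbb{Z}_2^k$ all of whose non-zero elements have weight $8$ (weight $16$ is ruled out because $k\leq 14$). For $k=13$, two independent weight-$8$ words $v_1,v_2$ whose sum also has weight $8$ satisfy $|I_{v_1}\cap I_{v_2}|=4$ by a symmetric-difference count, and relabeling produces the two even eights $\{1,\ldots,8\}$ and $\{5,\ldots,12\}$. For $k=14$, the same argument gives $|I_{v_i}\cap I_{v_j}|=4$ for all pairs, while the ternary inclusion-exclusion identity $|I_{v_1}\triangle I_{v_2}\triangle I_{v_3}|=\sum_i|I_{v_i}|-2\sum_{i<j}|I_{v_i}\cap I_{v_j}|+4|I_{v_1}\cap I_{v_2}\cap I_{v_3}|=8$ forces $|I_{v_1}\cap I_{v_2}\cap I_{v_3}|=2$. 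A Venn diagram then shows that the $14$ indices partition into seven doubles of size two, one per non-empty region, and a direct relabeling of these doubles as $\{5,6\},\{7,8\},\{1,2\},\{9,10\},\{3,4\},\{11,12\},\{13,14\}$ recovers the three even eights in the statement. The main obstacle is this final rigidity check: the discriminant bound only produces the dimension of $C$, and one must verify that the abstract combinatorial constraints on any three generators of $C$ really pin down the configuration up to reindexing, rather than producing an alternative isomorphism class.
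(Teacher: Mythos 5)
Your argument is correct, but note that the paper does not actually prove this proposition: it is quoted from Nikulin and attributed to \cite[Section 2.3]{garbagnati16}, so there is no in-paper proof to compare against. What you have written is essentially a self-contained reconstruction of Nikulin's original argument, and it holds together. The dimension bound is right: $\overline{K}/K=C$ gives $|A_{\overline{K}}|=2^{k-2\dim C}$ with $A_{\overline{K}}$ a subquotient of $\mathbb{Z}_2^k$ and hence $2$-elementary, primitivity of $\overline{K}$ in the unimodular lattice $H^2(W;\mathbb{Z})$ gives $\ell(A_{\overline{K}})=\ell(A_{\overline{K}^\perp})\leq 22-k$ by Theorem~\ref{sublatticeandperpsamediscriminantgroup}, and Theorem~\ref{theoremaboutcardinalitiesofevensets} forces every nonzero codeword to have weight $8$ since $k\leq 14<16$. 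The combinatorics are also correct: $|I_{v_i}\cap I_{v_j}|=4$ from the pairwise sums, $|I_{v_1}\cap I_{v_2}\cap I_{v_3}|=2$ from the triple sum, and for $k=14$ the seven Venn regions each have cardinality $2$ and exhaust all $14$ indices, so any choice of three independent codewords can be relabeled to the stated supports; the rigidity worry you raise at the end is thus resolved by your own count. The one presentational gain of your route is that it makes the proposition a consequence of results the paper already states (Theorems~\ref{theoremaboutcardinalitiesofevensets} and~\ref{sublatticeandperpsamediscriminantgroup}) rather than an external citation; the cost is nil, since this is in substance the same proof as in the cited source.
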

Let us apply Proposition~\ref{minimalprimitivesublatticesdisjointsmoothrationalcurves} for $k=13$ to the curves
\begin{equation*}
C_1,~C_5,~C_6,~C_9,~C_{10},~N_1,\ldots,~N_8.
\end{equation*}
We already know that $N\in\NS(X')$. This implies that, up to reordering the indices of the curves $N_i$, the sum of $N_1+N_2+N_3+N_4$ with other four curves among $C_1,C_5,C_6,C_9,C_{10}$ is $2$-divisible. If we use $C_1$ and $\{C_i,C_j,C_k\}\subset\{C_5,C_6,C_9,C_{10}\}$, then the intersection number
\begin{equation*}
\left(\frac{C_1+C_i+C_j+C_k+N_1+N_2+N_3+N_4}{2}\right)\cdot C_3,
\end{equation*}
is not an integer. Therefore, we have that
\begin{equation*}
\Lambda_1=\frac{C_5+C_6+C_9+C_{10}+N_1+N_2+N_3+N_4}{2}\in\NS(X').
\end{equation*}
But now let us apply Proposition~\ref{minimalprimitivesublatticesdisjointsmoothrationalcurves} for $k=14$ to the curves
\begin{equation*}
C_1,~C_2,~C_5,~C_6,~C_9,~C_{10},~N_1,\ldots,~N_8.
\end{equation*}
Then we have that
\begin{equation*}
\frac{C_1+C_2+C_i+C_j+N_1+N_2+N_5+N_6}{2}\in\NS(X'),
\end{equation*}
where $C_i,C_j$ is a choice of two curves among $C_5,C_6,C_9,C_{10}$. This is true up to permuting $N_1,\ldots,N_4$, and up to  permuting $N_5,\ldots,N_8$. Up to permuting $C_5,C_6,C_9,C_{10}$ we can assume that $\{i,j\}=\{5,6\}$, hence
\begin{equation*}
\Lambda_2=\frac{C_1+C_2+C_5+C_6+N_1+N_2+N_5+N_6}{2}\in\NS(X').
\end{equation*}

%-------------------------

\subsection{Computation of $\NS(X')$ and $T_{X'}$}
\begin{theorem}
\label{allaboutnsx'andtx'}
Let $X'$ be a fiber of the family of K3 surfaces $\mathfrak{X}'\rightarrow\mathcal{U}$ defined in Section~\ref{familyofquotients}. Assume $X'$ has minimal Picard number. Then the lattice $\NS(X')$ has rank $16$ and discriminant group $\mathbb{Z}_2^4\oplus\mathbb{Z}_4^2$. A $\mathbb{Z}$-basis for $\NS(X')$ is given by
\begin{equation*}
\{C_1,C_2,C_3,C_4,C_5,C_7,C_8,C_9,N_1,N_2,N_3,N_5,N_7,N,\Lambda_1,\Lambda_2\}.
\end{equation*}
The discriminant quadratic form of $\NS(X')$ is given by
\begin{displaymath}
\left( \begin{array}{cccccc}
0&\frac{1}{2}&0&0&0&0\\
\frac{1}{2}&0&0&0&0&0\\
0&0&0&\frac{1}{2}&0&0\\
0&0&\frac{1}{2}&0&0&0\\
0&0&0&0&\frac{1}{4}&0\\
0&0&0&0&0&\frac{1}{4}
\end{array} \right).
\end{displaymath}
Hence, the transcendental lattice $T_{X'}$ is isometric to $U(2)^{\oplus2}\oplus\langle-4\rangle^{\oplus2}$.
\end{theorem}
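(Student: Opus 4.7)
My plan is to adapt the strategy of Theorem~\ref{mainresultNSgeneratedby24smoothrationalcurves}. Let $L$ denote the sublattice of $\NS(X')$ generated by the $16$ listed classes; all generators were verified to lie in $\NS(X')$ in the preceding subsection, so $L\subseteq \NS(X')$. First I would, using the incidence data from Figure~\ref{curvearrangementafterquotient} together with the divisibility relations $2N=\sum_{i=1}^8 N_i$, $2\Lambda_1=C_5+C_6+C_9+C_{10}+N_1+N_2+N_3+N_4$, and $2\Lambda_2=C_1+C_2+C_5+C_6+N_1+N_2+N_5+N_6$, assemble the $16\times 16$ Gram matrix of $L$ and show by direct computation that its determinant is nonzero (equal, up to sign, to $2^4\cdot 4^2=256$). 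This forces $\rk(L)=16$, so $\rho(X')\geq 16$. For the matching upper bound, the family $\mathfrak{X}'\to\mathcal{U}$ is four-dimensional: the fiberwise quotient $\mathfrak{X}\to\mathfrak{X}'$ is generically $2\!:\!1$, and $\mathfrak{X}\to\mathcal{U}$ is itself four-dimensional by Observation~\ref{triple-doublek3sforma4dimfamily}. Hence $\rho(X')=16$ under the minimal-Picard-number hypothesis, and $L$ has finite index in $\NS(X')$.

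The crucial step is to promote $L\subseteq\NS(X')$ to an equality. Computing the Smith normal form of the inverse Gram matrix of $L$ yields $A_L\cong\mathbb{Z}_2^4\oplus\mathbb{Z}_4^2$, and after a suitable change of basis the discriminant quadratic form $q_L$ takes the block-diagonal shape stated in the theorem. Nontrivial even overlattices of $L$ inside $\NS(X')$ correspond, via Theorem~\ref{overlatticesinbijectionwithisotropicsubgroups}, to nonzero isotropic subgroups of $(A_L,q_L)$. For each nonzero isotropic class I would exhibit a representative in $L\otimes\mathbb{Q}$ that, modulo $L$, becomes a half-sum of fewer than eight pairwise disjoint smooth rational curves drawn from $\{C_1,\ldots,C_{12},N_1,\ldots,N_8\}$, contradicting the nonexistence of even sets of cardinality four on a K3 surface (Observation~\ref{keyfact}, together with Theorem~\ref{theoremaboutcardinalitiesofevensets} for the remaining cardinalities). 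This combinatorial case analysis, modeled on Lemma~\ref{isotropicvectorsnotinNSX}, is the main obstacle. However, the classes $N,\Lambda_1,\Lambda_2$ were introduced precisely to absorb the half-sums that genuinely lie in $\NS(X')$, and the residual action on the curve configuration induced by $\iota_{001}$ and $\iota_{010}$ reduces the remaining isotropic classes to a manageable number.

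Finally, for the transcendental lattice, Theorem~\ref{sublatticeandperpsamediscriminantgroup} forces $T_{X'}$ to have signature $(2,4)$, discriminant group $\mathbb{Z}_2^4\oplus\mathbb{Z}_4^2$, and discriminant quadratic form $-q_{\NS(X')}$. A direct verification shows that $U(2)^{\oplus 2}\oplus\langle-4\rangle^{\oplus 2}$ has the same invariants. Since the hypotheses of Theorem~\ref{uniquenessofevenlattices} fail here (one has $t_++t_-=6$ while $2+\ell(A)=8$), I would obtain the identification $T_{X'}\cong U(2)^{\oplus 2}\oplus\langle-4\rangle^{\oplus 2}$ either by isolating a $U(2)$ summand of $T_{X'}$ through an adaptation of Theorem~\ref{splittingofevenlattices} to the explicit block form of $q_{T_{X'}}$ and recognizing the rank-$4$ remainder from its invariants, or by exploiting the isometric embedding $T_{X'}(2)\hookrightarrow T_X$ induced by pullback along $\Bl_8 X\to X'$ together with the explicit description $T_X\cong U\oplus U(2)\oplus\langle-4\rangle^{\oplus 2}$ from Proposition~\ref{transcendentallatticetriple-doublek3surface}.
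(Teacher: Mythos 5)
Your proposal reproduces the paper's strategy for the main body of the theorem essentially verbatim: take the lattice $M$ generated by the $20$ curves together with $N,\Lambda_1,\Lambda_2$, observe it has rank $16$ and discriminant $-256$, use the four-dimensionality of the family for the matching bound on $\rho(X')$, compute $A_M\cong\mathbb{Z}_2^4\oplus\mathbb{Z}_4^2$ via the Smith normal form of the inverse Gram matrix, and then kill every nonzero isotropic class of $(A_M,q_M)$ by reducing a representative, modulo $M$ and the linear equivalences among the $C_i$ (e.g.\ $C_1+C_2+C_3+C_4=C_7+C_8+C_9+C_{10}$, $C_3+C_5=C_4+C_6$, etc.), to a half-sum of four disjoint smooth rational curves, which is forbidden. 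This is exactly the paper's Example~\ref{illustrationapplicationoperations} mechanism, and the paper carries out the enumeration of the roughly thirty isotropic classes explicitly (computer-assisted); you correctly identify this case analysis as the main burden but leave it unexecuted, which is acceptable for a proposal.

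The one place where your argument has a genuine gap is the final identification $T_{X'}\cong U(2)^{\oplus2}\oplus\langle-4\rangle^{\oplus2}$. You are right that Theorem~\ref{uniquenessofevenlattices} fails here ($t_++t_-=6<8=2+\ell(A_{T_{X'}})$), but neither of your two proposed repairs is sound as stated. The first invokes an ``adaptation'' of Theorem~\ref{splittingofevenlattices} to split off a $U(2)$ summand; that theorem's hypotheses fail even more badly in this range ($6<3+6$), and it produces a $U$ summand, not $U(2)$, so there is no obvious adaptation --- indeed a rank-$6$ lattice with $\ell(A)=6$ cannot split off any unimodular piece. The second, the finite-index embedding $T_{X'}(2)\hookrightarrow T_X$ coming from the degree-$2$ rational map, constrains the genus of $T_{X'}$ but does not by itself single out a unique isometry class; extra work would be needed to rule out other lattices with the same signature and discriminant form admitting such an embedding. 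The paper closes this step by appealing to the sharper uniqueness criterion of Miranda--Morrison \cite[Chapter VIII, Corollary 7.8(3)]{mirandamorrison}, which applies precisely in this borderline situation; some such refined criterion (or an explicit classification of the genus) is what your argument is missing.
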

\begin{proof}
Some of the computations that follow are computer assisted (we used \cite{wolfram}). Define
\begin{equation*}
M=\langle C_1,\ldots,C_{12},N_1,\ldots,N_8,N,\Lambda_1,\Lambda_2\rangle_\mathbb{Z}\subseteq\NS(X').
\end{equation*}
The lattice $M$ has rank $16$, implying that $\rho(X')\geq16$. But since the K3 surfaces $X'$ vary in a $4$-dimensional family, we have that the minimum Picard number for $X'$ is $16$. Therefore, $M$ is a sublattice of $\NS(X')$ of finite index. We show that $M=\NS(X')$.

A $\mathbb{Z}$-basis for $M$ is given by
\begin{equation*}
\{C_1,C_2,C_3,C_4,C_5,C_7,C_8,C_9,N_1,N_2,N_3,N_5,N_7,N,\Lambda_1,\Lambda_2\}.
\end{equation*}
This is true because
\begin{gather*}
C_6=C_3-C_4+C_5,~C_{10}=C_1+C_2+C_3+C_4-C_7-C_8-C_9,\\
C_{11}=C_3+C_5-C_9,~C_{12}=-C_1-C_2-C_4+C_5+C_7+C_8+C_9,\\
N_4=-C_1-C_2-2C_3-2C_5+C_7+C_8-N_1-N_2-N_3+2\Lambda_1,\\
N_6=-C_1-C_2-C_3+C_4-2C_5-N_1-N_2-N_5+2\Lambda_2,\\
N_8=2C_1+2C_2+3C_3-C_4+4C_5-C_7-C_8+N_1+N_2-N_7\\
+2N-2\Lambda_1-2\Lambda_2.
\end{gather*}

To show that $M=\NS(X')$ we use the same strategy we used in the proof of Theorem~\ref{mainresultNSgeneratedby24smoothrationalcurves}: we prove that any isotropic element of the discriminant group $A_M$ cannot be an element in $\NS(X')$. If $B$ is the intersection matrix of the curves in the $\mathbb{Z}$-basis of $M$ above, then the dual $M^*$ is generated over $\mathbb{Z}$ by the rows of $B^{-1}$. Using Sage, we can find matrices $M_1,M_2\in\SL_{16}(\mathbb{Z})$ such that $M_1B^{-1}M_2$ is the diagonal matrix $\diag\left(1,\ldots,1,\frac{1}{2},\frac{1}{2},\frac{1}{2},\frac{1}{2},\frac{1}{4},\frac{1}{4}\right)$. This tells us that the discriminant group of $M$ is isomorphic to $\mathbb{Z}_2^4\oplus\mathbb{Z}_4^2$. In particular, the rows of $M_1B^{-1}$ give us an alternative $\mathbb{Z}$-basis of $M^*$ to work with. The matrix $M_1B^{-1}$ is explicitly given by
{\scriptsize
\begin{displaymath}
\left( \begin{array}{cccccccccccccccc}
0&0&0&0&0&0&0&0&0&0&0&0&0&0&0&1\\
0&0&0&0&0&0&0&0&0&0&0&0&0&0&1&0\\
0&0&0&0&0&0&0&0&0&0&0&0&0&1&0&0\\
0&0&0&0&0&0&0&0&0&0&0&0&1&0&0&0\\
0&0&0&0&0&0&0&0&0&0&0&1&0&0&0&0\\
0&0&0&0&0&0&0&0&0&0&1&0&0&0&0&0\\
0&0&0&0&0&0&0&1&0&0&0&0&0&0&0&0\\
0&0&0&0&0&0&1&0&0&0&0&0&0&0&0&0\\
0&0&0&1&0&0&0&0&0&0&0&0&0&0&0&0\\
0&1&0&0&0&0&0&0&0&0&0&0&0&0&0&0\\
0&0&0&0&0&0&0&0&0&\frac{1}{2}&-\frac{1}{2}&-\frac{1}{2}&-\frac{1}{2}&0&0&0\\
0&0&0&0&0&0&0&0&\frac{1}{2}&0&-\frac{1}{2}&-\frac{1}{2}&-\frac{1}{2}&0&0&0\\
0&0&0&0&\frac{1}{2}&0&0&-\frac{1}{2}&0&0&0&-\frac{1}{2}&-\frac{1}{2}&0&0&0\\
0&0&\frac{1}{2}&-\frac{1}{2}&0&0&0&0&0&0&0&0&0&0&0&0\\
0&0&0&0&0&\frac{1}{4}&-\frac{1}{4}&-\frac{1}{2}&0&0&-\frac{1}{2}&0&-\frac{1}{2}&0&0&0\\
\frac{1}{4}&-\frac{1}{4}&0&-\frac{1}{2}&0&0&0&0&0&0&-\frac{1}{2}&0&-\frac{1}{2}&0&0&0
\end{array} \right).
\end{displaymath}
}%
Denote by $v_1,v_2,v_3,v_4,w_1,w_2$ respectively the last six rows of the matrix above, which generate the discriminant group $A_M$. We can then enumerate the isotropic elements of $A_M$, which are given by the classes modulo $M$ of the following vectors:
{\footnotesize
\[
(C_{1}+C_{2}+C_{7}+C_{8})/2,~(C_{1}+C_{2}+C_{3}+C_{4})/2,~(C_{3}+C_{4}+C_{7}+C_{8})/2,
\]
\[
(C_{5}+C_{9}+N_{5}+N_{7})/2,~(N_{1}+N_{3}+N_{5}+N_{7})/2,~(C_{5}+C_{9}+N_{1}+N_{3})/2,
\]
\[
(N_{2}+N_{3}+N_{5}+N_{7})/2,~(C_{5}+C_{9}+N_{2}+N_{3})/2,~(C_{1}+C_{2}+N_{1}+N_{2})/2,
\]
\[
(C_{7}+C_{8}+N_{1}+N_{2})/2,~(C_{3}+C_{4}+N_{1}+N_{2})/2,
\]
\[
(C_{3}+C_{4}+C_{5}+C_{9}+N_{5}+N_{7})/2,
\]
\[
(C_{3}+C_{4}+C_{5}+C_{9}+N_{1}+N_{3})/2,
\]
\[
(C_{3}+C_{4}+C_{5}+C_{9}+N_{2}+N_{3})/2,
\]
\[
(C_{1}+C_{2}+C_{5}+C_{7}+C_{8}+C_{9}+N_{5}+N_{7})/2,
\]
\[
(C_{1}+C_{2}+C_{7}+C_{8}+N_{1}+N_{3}+N_{5}+N_{7})/2,
\]
\[
(C_{1}+C_{2}+C_{3}+C_{4}+N_{1}+N_{3}+N_{5}+N_{7})/2,
\]
\[
(C_{3}+C_{4}+C_{7}+C_{8}+N_{1}+N_{3}+N_{5}+N_{7})/2,
\]
\[
(C_{1}+C_{2}+C_{5}+C_{7}+C_{8}+C_{9}+N_{1}+N_{3})/2,
\]
\[
(C_{1}+C_{2}+C_{7}+C_{8}+N_{2}+N_{3}+N_{5}+N_{7})/2,
\]
\[
(C_{1}+C_{2}+C_{3}+C_{4}+N_{2}+N_{3}+N_{5}+N_{7})/2,
\]
\[
(C_{3}+C_{4}+C_{7}+C_{8}+N_{2}+N_{3}+N_{5}+N_{7})/2,
\]
\[
(C_{1}+C_{2}+C_{5}+C_{7}+C_{8}+C_{9}+N_{2}+N_{3})/2,
\]
\[
(C_{1}+C_{2}+C_{3}+C_{4}+C_{7}+C_{8}+N_{1}+N_{2})/2,
\]
\[
(C_{1}+C_{2}+C_{5}+C_{9}+N_{1}+N_{2}+N_{5}+N_{7})/2,
\]
\[
(C_{5}+C_{7}+C_{8}+C_{9}+N_{1}+N_{2}+N_{5}+N_{7})/2,
\]
\[
(C_{1}+C_{2}+C_{3}+C_{4}+C_{5}+C_{7}+C_{8}+C_{9}+N_{5}+N_{7})/2,
\]
\[
(C_{1}+C_{2}+C_{3}+C_{4}+C_{5}+C_{7}+C_{8}+C_{9}+N_{1}+N_{3})/2,
\]
\[
(C_{1}+C_{2}+C_{3}+C_{4}+C_{5}+C_{7}+C_{8}+C_{9}+N_{2}+N_{3})/2,
\]
\[
(C_{1}+C_{2}+C_{3}+C_{4}+C_{5}+C_{9}+N_{1}+N_{2}+N_{5}+N_{7})/2,
\]
\[
(C_{3}+C_{4}+C_{5}+C_{7}+C_{8}+C_{9}+N_{1}+N_{2}+N_{5}+N_{7})/2.
\]
}%
It is easy to show that all the elements above are not vectors of the lattice $\NS(X')$. To show this, let $v$ be one of these vectors, and assume by contradiction that $v\in\NS(X')$. Then we can
\begin{itemize}
\item Add or subtract elements of $M$ to $v$;
\item Use the relations
\begin{gather*}
C_1+C_2+C_3+C_4=C_7+C_8+C_9+C_{10},\\
C_1+C_2+C_{11}+C_{12}=C_5+C_6+C_7+C_8,\\
C_3+C_5=C_4+C_6=C_9+C_{11}=C_{10}+C_{12}.
\end{gather*}
\end{itemize}
Using these operations on $v$ we can produce a vector $v'\in\NS(X')$ equal to half the sum of four disjoint smooth rational curves (see Example~\ref{illustrationapplicationoperations} after the end of this proof), which is impossible by Theorem~\ref{theoremaboutcardinalitiesofevensets} (observe that, in some cases, $v$ is already half the sum of four disjoint smooth rational curves). This implies that $M=\NS(X')$.

If we choose $\{v_1,v_2,v_4+2w_2,v_3+v_4,w_1,w_2\}$ as basis for the discriminant group of $\NS(X')$, we obtain the discriminant quadratic form claimed in the statement of Theorem~\ref{allaboutnsx'andtx'}. By Theorem~\ref{sublatticeandperpsamediscriminantgroup} and \cite[Chapter VIII, Corollary 7.8(3)]{mirandamorrison} it follows that $T_{X'}$ is isometric to the lattice $U(2)^{\oplus2}\oplus\langle-4\rangle^{\oplus2}$.
\end{proof}
\begin{example}
\label{illustrationapplicationoperations}
Say $v=(C_{1}+C_{2}+C_{7}+C_{8}+N_{2}+N_{3}+N_{5}+N_{7})/2$. Every time we use one of the operations above, we write ``$\approx$". Then
\begin{gather*}
\frac{C_{1}+C_{2}+C_{7}+C_{8}+N_{2}+N_{3}+N_{5}+N_{7}}{2}\\
\approx\frac{C_{3}+C_{4}+C_{9}+C_{10}+N_{2}+N_{3}+N_{5}+N_{7}}{2}\\
\approx\frac{C_{5}+C_{6}+C_{9}+C_{10}+N_{2}+N_{3}+N_{5}+N_{7}}{2}\approx\frac{N_{1}+N_{4}+N_{5}+N_{7}}{2}.
\end{gather*}
\end{example}

%-------------------------

\subsection{Additional properties of $X'$}
The following proposition is the analogue of Proposition~\ref{XnoZ24andnoZ23symplecticactions} for the K3 surfaces $X'$.
\begin{proposition}
\label{X'hasZ24symplecticaction}
Let $X'$ be a fiber of the family of K3 surfaces $\mathfrak{X}'\rightarrow\mathcal{U}$ defined in Section~\ref{familyofquotients} with $\rho(X')=16$. Then the following hold:
\begin{itemize}
\item[(i)] $X'$ admits $\mathbb{Z}_2^4$ symplectic action;
\item[(ii)] $X'$ is not isomorphic to the minimal resolution of the quotient of a K3 surface by a symplectic action of the group $\mathbb{Z}_2^4$;
\item[(iii)] $X'$ is isomorphic to the minimal resolution of the quotient of a K3 surface by a symplectic action of the group $\mathbb{Z}_2^3$.
\end{itemize}
\end{proposition}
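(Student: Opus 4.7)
The plan is to translate each of the three assertions into a statement about primitive embeddings of lattices, apply the standard criteria of Nikulin and Garbagnati--Sarti, and then verify the resulting lattice-theoretic conditions using the explicit description $T_{X'} \cong U(2)^{\oplus 2} \oplus \langle -4 \rangle^{\oplus 2}$ from Theorem~\ref{allaboutnsx'andtx'}.

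For part (i), recall that by \cite{nikulin79} a K3 surface admits a symplectic action of a finite abelian group $G$ if and only if its transcendental lattice embeds primitively into $\Omega_G^\perp$. Using the description of $\Omega_{\mathbb{Z}_2^4}^\perp$ given in \cite[Proposition 5.1]{garbagnatisarti09}, I would exhibit an explicit primitive embedding of $U(2)^{\oplus 2} \oplus \langle -4 \rangle^{\oplus 2}$. Since $T_{X'}$ has signature $(2,4)$ while $\Omega_{\mathbb{Z}_2^4}^\perp$ has rank $7$ and signature $(3,4)$, the embedding should realize $T_{X'}$ as a corank-$1$ primitive sublattice whose orthogonal complement is a rank-$1$ positive definite lattice, and primitivity is verified by pairing against a set of generators of $\Omega_{\mathbb{Z}_2^4}^\perp$.

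For parts (ii) and (iii), by \cite[Corollary 8.9]{garbagnatisarti16} the condition $X' \cong \widetilde{W/G}$ is equivalent to the primitive embedding of the Nikulin-type lattice $M_G$ into $\NS(X')$, and via Theorem~\ref{sublatticeandperpsamediscriminantgroup} to the primitive embedding of $T_{X'}$ into $M_G^\perp \subset H^2(X';\mathbb{Z})$. For (iii) with $G = \mathbb{Z}_2^3$, we already computed $M_{\mathbb{Z}_2^3}^\perp \cong \langle 2 \rangle^{\oplus 2} \oplus U(2) \oplus \langle -2 \rangle^{\oplus 4}$ in the proof of Proposition~\ref{XnoZ24andnoZ23symplecticactions}(ii), so the task is to display $T_{X'}$ as a primitive sublattice. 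Geometrically one expects this embedding to come from a configuration of $14$ disjoint smooth rational curves on $X'$: the eight exceptional divisors $N_1,\ldots,N_8$ together with $C_1, C_2, C_5, C_6, C_9, C_{10}$ from Figure~\ref{curvearrangementafterquotient}, whose three even eights are precisely $N$, $\Lambda_1$, and $\Lambda_2$ after relabelling. For (ii) with $G = \mathbb{Z}_2^4$, one instead rules out the analogous primitive embedding by a discriminant form argument: since $M_{\mathbb{Z}_2^4}$ is $2$-elementary, so is $M_{\mathbb{Z}_2^4}^\perp$, whereas $A_{T_{X'}} \cong \mathbb{Z}_2^4 \oplus \mathbb{Z}_4^2$ contains $\mathbb{Z}_4$ summands which cannot be glued through a rank-$1$ orthogonal complement inside a $2$-elementary ambient lattice.

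The main obstacle is part (ii): the clean parity argument used in Proposition~\ref{XnoZ24andnoZ23symplecticactions}(ii) is unavailable here, because $T_{X'} \cong U(2)^{\oplus 2} \oplus \langle -4 \rangle^{\oplus 2}$ is already $2$-divisible in its pairing and therefore has only even intersection numbers. The obstruction must be extracted from a finer invariant, namely the discriminant quadratic form, by comparing $q_{T_{X'}}$ with $q_{M_{\mathbb{Z}_2^4}^\perp}$ and using Nikulin's existence criteria for primitive embeddings; carrying out this gluing analysis carefully, together with writing down the explicit embeddings in (i) and (iii) and verifying the disjointness of the fourteen rational curves, constitutes the bulk of the remaining work.
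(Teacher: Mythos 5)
Your plan for part (i) coincides with the paper's: both invoke Nikulin's criterion and exhibit a primitive embedding of $T_{X'}\cong U(2)^{\oplus2}\oplus\langle-4\rangle^{\oplus2}$ into $\Omega_{\mathbb{Z}_2^4}^\perp\cong U(2)^{\oplus3}\oplus\langle-8\rangle$; the paper reduces this to embedding $\langle-4\rangle^{\oplus2}$ primitively into $U(2)\oplus\langle-8\rangle$ via the explicit vectors $(1,1,1)$ and $(-1,1,0)$, which is exactly the kind of computation you defer. For part (iii) you take a genuinely different and heavier route: the paper simply observes that $X'=\widetilde{X/\iota}$ with $X\cong\widetilde{Y/\mathbb{Z}_2^2}$ for the Hirzebruch--Kummer covering $Y$, so $X'\cong\widetilde{Y/\mathbb{Z}_2^3}$ directly, with no lattice theory at all. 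Your alternative via the fourteen disjoint rational curves $N_1,\ldots,N_8,C_1,C_2,C_5,C_6,C_9,C_{10}$ is consistent with the configuration established in the even-eights subsection, but to conclude via \cite[Corollary 8.9]{garbagnatisarti16} you would still need to check that the minimal primitive sublattice of $\NS(X')$ containing these curves is $M_{\mathbb{Z}_2^3}$ itself and not a proper overlattice; the geometric argument sidesteps this entirely.

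The genuine gap is in part (ii). Your proposed obstruction --- that $A_{T_{X'}}\cong\mathbb{Z}_2^4\oplus\mathbb{Z}_4^2$ has $\mathbb{Z}_4$ summands that ``cannot be glued through a rank-one orthogonal complement inside a $2$-elementary ambient lattice'' --- is asserted, not proved, and it is not obviously true. The glue group $H\subset A_{T_{X'}}\oplus A_K$ is cyclic (it injects into $A_K$ for the rank-one complement $K$), and ruling out the possibility that $H\cong\mathbb{Z}_4$ kills all order-$4$ classes in $H^\perp/H$ requires an actual case analysis of how a cyclic subgroup can pair against the two $\mathbb{Z}_4$ summands $w_1,w_2$; with $H\cong\mathbb{Z}_2$ one can argue quickly (one would need $2w_1=2w_2$ in $\mathbb{Z}_4^2$, impossible), but the $\mathbb{Z}_4$ case is not dispatched by the slogan you give. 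You correctly flag this as ``the bulk of the remaining work,'' which is precisely the problem: the proof is not there. The paper avoids all of this by citing \cite[Theorem 8.3]{garbagnatisarti16}, where the transcendental lattices of minimal resolutions of quotients by $\mathbb{Z}_2^4$ with Picard number $16$ are classified; one checks that $U(2)^{\oplus2}\oplus\langle-4\rangle^{\oplus2}$ does not appear in that finite list, and part (ii) follows immediately. I would recommend either completing the gluing analysis in full or, better, replacing it with this table lookup.
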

\begin{proof}
We know from \cite{nikulin79} that $X'$ admits $\mathbb{Z}_2^4$ symplectic action if and only if the lattice $T_{X'}$ embeds primitively into $\Omega_{\mathbb{Z}_2^4}^\perp$. But $T_{X'}\cong U(2)^{\oplus2}\oplus\langle-4\rangle^{\oplus2}$ by Theorem~\ref{allaboutnsx'andtx'}, and $\Omega_{\mathbb{Z}_2^4}^\perp\cong U(2)^{\oplus3}\oplus\langle-8\rangle$ by \cite[Proposition 5.1]{garbagnatisarti09}. Therefore, it is enough to show that $\langle-4\rangle^{\oplus2}$ embeds primitively into $U(2)\oplus\langle-8\rangle$. The vectors $(1,1,1),(-1,1,0)$ in $U(2)\oplus\langle-8\rangle$ generate a sublattice $L$ isometric to $\langle-4\rangle^{\oplus2}$. To show that $L$ is primitive, assume that $m(x,y,z)=(a-b,a+b,a)\in L$ for some integers $a,b,m$ with $m>1$ and $(x,y,z)\in U(2)\oplus\langle-8\rangle$. Then $x=2z-y$, which implies $(x,y,z)=(z)(1,1,1)+(y-z)(-1,1,0)\in L$, proving part (i).

Part (ii) follows from the fact that $T_{X'}$ is not isometric to any of the transcendental lattices listed in \cite[Theorem 8.3]{garbagnatisarti16}.

For part (iii), we know from the discussion in Section~\ref{three4dimensionalfamiliesofk3surfacesrelated} that $X'$ is the minimal resolution of the quotient of the Hirzebruch-Kummer covering $Y$ (see Example~\ref{hirzebruchkummercoveringexponent2p2sixlines}) by a symplectic action of $\mathbb{Z}_2^3$.
\end{proof}

%----------------------------------------------------------------------------------------------------

%BIBLIOGRAPHY

\

{\small\textsc{Department of Mathematics and Statistics, University of Massachusetts Amherst, Amherst, MA 01003, USA}}

\emph{E-mail address:} \url{schaffler@math.umass.edu}

\end{document}